\documentclass[11pt,reqno]{amsart}
\usepackage{amssymb}
\usepackage{amsmath, amssymb, amsthm}
\usepackage{mathrsfs}

\textwidth=15.0cm \textheight=21cm \hoffset=-1.1cm \voffset=1.3cm

\newtheorem{theorem}{Theorem}[section]

\newtheorem{lemma}{Lemma}[section]
\newtheorem{remark}{Remark}[section]

\numberwithin{equation}{section}

\textheight =21cm 

\begin{document}
\title[Magnetohydrodynamic equations]{Blow-up criterion for the   compressible magnetohydrodynamic equations with vacuum}

\author{Shengguo Zhu}
\address[S. G. Zhu]{Department of Mathematics, Shanghai Jiao Tong University,
Shanghai 200240, P.R.China; School of Mathematics, Georgia Tech
Atlanta 30332, U.S.A.}
\email{\tt zhushengguo@sjtu.edu.cn}

\begin{abstract}
In this paper,  the  $3$D compressible   MHD equations with  initial vacuum or infinity electric conductivity  is considered. We  prove  that the $L^\infty$ norms of the deformation tensor $D(u)$ and the absolute temperature $\theta$ control the possible blow-up  (see \cite{y1}\cite{olga}\cite{zx}) of strong solutions, especially   for the non-resistive MHD system when the magnetic diffusion vanishes. This conclusion  means that if a solution of the compressible MHD equations is initially regular and loses its regularity at some later time, then the formation of singularity must be caused  by  losing  the bound of  $D(u)$ or  $\theta$ as the critical time approaches.  The viscosity coefficients are only restricted by the physical conditions. Our criterion  (see (\ref{eq:2.911})) is similar to \cite{pc} for $3$D incompressible Euler equations and  \cite{hup} for  $3$D compressible isentropic Navier-stokes equations.
\end{abstract}

\date{Feb 12, 2014}
\keywords{MHD,  infinity electric conductivity,   vacuum, blow-up criterion.\\
{\bf Acknowledgments. } Shengguo Zhu's research was supported in part
by Chinese National Natural Science Foundation under grant 11231006.
}

\maketitle

\section{Introduction}
Magnetohydrodynamics is that part of the mechanics of continuous media which studies the motion of electrically conducting media in the presence of a magnetic field. The dynamic motion of  fluid and  magnetic field interact strongly on each other, so the hydrodynamic and electrodynamic effects are coupled. The applications of magnetohydrodynamics cover a very wide range of physical objects, from liquid metals to cosmic plasmas, for example, the intensely heated and ionized fluids in an electromagnetic field in astrophysics  and plasma physics.
In $3$-D space, the compressible   magnetohydrodynamic equations in a domain $\Omega $ of $\mathbb{R}^3$   can be written as
\begin{equation}
\label{eq:1.2pp}
\begin{cases}
\displaystyle
H_t-\text{rot}(u\times H)=-\text{rot}\Big(\frac{1}{\sigma}\text{rot}H\Big),\\[6pt]
\displaystyle
\text{div}H=0,\\[6pt]
\displaystyle
\rho_t+\text{div}(\rho u)=0,\\[6pt]
\displaystyle
(\rho u)_t+\text{div}(\rho u\otimes u)
  +\nabla P=\text{div}\mathbb{T}+\text{rot}H\times H,\\[6pt]
\displaystyle
(\rho \theta)_t+\text{div}(\rho \theta u)-\kappa \triangle \theta+P\text{div}u
=\text{div} (u\mathbb{ T})-u\text{div} \mathbb{T}+\frac{1}{\sigma}|\text{rot}H|^2.
\end{cases}
\end{equation}

In this system,  $x\in \Omega$ is the spatial coordinate; $t\geq 0$ is the time;  $H=(H^{(1)},H^{(2)},H^{(3)})$ is the magnetic field;  $\text{rot}H=\nabla \times H$ denotes the rotation of the magnetic field;  $0< \sigma\leq \infty$ is the electric conductivity coefficient;  $\rho$ is the mass density;
$u=(u^{(1)},u^{(2)},u^{(3)})\in \mathbb{R}^3$ is the velocity of fluids;   $\kappa> 0$ is the thermal conductivity coefficient; $P$ is the pressure  satisfying 
\begin{equation}
\label{eq:1.3}
P=R \rho \theta, 
\end{equation}
where   $\theta$ is the absolute temperature,  $R$  is a  positive constant; $\mathbb{T}$ is the stress tensor:
\begin{equation}
\label{eq:1.4}
\mathbb{T}=2\mu D(u)+\lambda \text{div}u \mathbb{I}_3,\quad D(u)=\frac{\nabla u+(\nabla u)^\top}{2},
\end{equation}
where $D(u)$ is the deformation tensor, $\mathbb{I}_3$ is the $3\times 3$ unit matrix, $\mu$ is the shear viscosity coefficient, $\lambda$ is the bulk viscosity coefficient, $\mu$ and $\lambda$ are both real constants satisfying
\begin{equation}
\label{eq:1.5}
  \mu > 0, \quad \lambda+\frac{2}{3}\mu \geq 0,\end{equation}
which ensures the ellipticity of the   Lam$\acute{\text{e}}$  operator.
Although the electric field $E$ doesn't appear in  system (\ref{eq:1.2pp}),
it is indeed induced according to a relation 
$$
E=\frac{1}{\sigma}\text{rot}H- u\times H
$$
by moving the conductive flow in the magnetic field.

The aim of this paper is to give a blow-up criterion of strong solutions  to  system (\ref{eq:1.2pp}) in  a bounded,  smooth domain $\Omega \in \mathbb{R}^3$ with the initial condition:
\begin{align}
\label{fan1}(H,\rho, u,\theta)|_{t=0}=(H_0(x), \rho_0(x),  u_0(x), \theta_0(x)),\  x\in \Omega,
\end{align}
and the  Dirichlet,  Neumann boundary conditions for $(H,u,\theta)$:

\begin{align}
\label{fan2}   (H, u,\partial \theta /\partial n)|_{\partial \Omega}=(0,0,0), \quad \text{when }\ 0< \sigma < +\infty;\\
\label{fan3}
( u,\partial \theta /\partial n)|_{\partial {\Omega}}=(0,0),\quad \text{when }\ \sigma= +\infty,
\end{align}
￼where   $n$ is the unit outer normal vector  to $\partial \Omega$.
Actually, some similar  result for $\Omega=\mathbb{R}^3$ can be also obtained via the similar argument used in this paper.

Throughout this paper, we adopt the following simplified notations for the standard homogeneous and inhomogeneous Sobolev space:
\begin{equation*}\begin{split}
&D^{k,r}=\{f\in L^1_{loc}(\Omega): |f|_{D^{k,r}}=|\nabla^kf|_{L^r}<+\infty\},\\
&  D^k=D^{k,2},\   \|(f,g)\|_X=\|f\|_X+\|g\|_X,\ \|f\|_{1,0}=\|f\|_{H^1_0(\Omega)},\\
&   \|f\|_s=\|f\|_{H^s(\Omega)},\
 |f|_p=\|f\|_{L^p(\Omega)},\quad |f|_{D^k}=\|f\|_{D^k(\Omega)}.
\end{split}
\end{equation*}
 A detailed study of homogeneous Sobolev space may be found in \cite{gandi}.

As has been observed in \cite{jishan}, when vacuum appears,  in order to make sure that the IBVP (\ref{eq:1.2pp})-(\ref{fan1})  with (\ref{fan2}) or (\ref{fan3}) is well-posed, the lack of a positive lower bound of the initial mass density $\rho_0$ should be compensated with some initial layer compatibility condition on the initial data $(H_0,\rho_0, u_0,\theta_0)$, for strong solutions  \cite{jishan},  which can be shown as

\begin{theorem} \cite{jishan} \label{th5}
Let the constant $q\in (3,6]$, and the initial data  $(H_0, \rho_0,  u_0,\theta_0)$ satisfy
\begin{equation}\label{th78}
\begin{split}
&      \rho_0\geq 0,\  \rho_0\in W^{1,q},\ u_0\in H^1_0\cap H^2,\    \theta_0\in H^2,\ \text{div}H_0=0,
\end{split}
\end{equation}
and   the  following initial layer compatibility conditions:
\begin{equation}\label{th79}
\begin{cases}
\displaystyle
Lu_0+\nabla P_0- \text{rot} H_0\times H_0=\sqrt{\rho_0} g_1,\quad \text{for \ some }\ g_1 \in L^2,\\[6pt]
\displaystyle
-\kappa \triangle \theta_0-Q(u_0)-\frac{1}{\sigma}|\text{rot} H_0|^2=\sqrt{\rho_0} g_2,\quad \text{for \ some }\ g_2 \in L^2,
\end{cases}
\end{equation}
where
$$P_0=R\rho_0\theta_0,\  Lu_0=-\mu \triangle u_0-(\mu+\lambda)\nabla \text{div}u_0.$$
\begin{enumerate}
\item
If  $0<\sigma< +\infty$,  
$
H_0\in H^1_0\cap H^2
$,
 then there exists a small time $T_*$ and a unique solution $(H,\rho,u,\theta)$ to IBVP (\ref{eq:1.2pp})-(\ref{fan1}) with (\ref{fan2})  satisfying:
\begin{equation}\label{regs}\begin{split}
&\rho\in C([0,T_*];W^{1,q}),\ (H,u)\in C([0,T_*];H^1_0\cap H^2)\cap  L^2([0,T_*];D^{2,q}),\\
&\theta \in C([0,T_*]; H^2)\cap  L^2([0,T_*];D^{2,q}),\\
&(H_t, u_t,\theta_t)\in  L^2([0,T_*];D^1),\  (H_t,\sqrt{\rho}u_t, \sqrt{\rho}\theta_t)\in L^\infty([0,T_*];L^2).
\end{split}
\end{equation}
\item
If $\sigma= +\infty$, 
$  H_0\in W^{1,q}$,
 then there exists a small time $T_*$ and a unique solution $(H,\rho,u,\theta)$ to IBVP (\ref{eq:1.2pp})-(\ref{fan1}) with  (\ref{fan3})  satisfying
\begin{equation}\label{regsa}\begin{split}
&(H,\rho) \in C([0,T_*];W^{1,q}),\ u\in C([0,T_*];H^1_0\cap H^2)\cap  L^2([0,T_*];D^{2,q}),\\
&\theta \in C([0,T_*]; H^2)\cap  L^2([0,T_*];D^{2,q}),\\
& ( u_t,\theta_t)\in  L^2([0,T_*];D^1),\  (\sqrt{\rho}u_t, \sqrt{\rho}\theta_t)\in L^\infty([0,T_*];L^2).
\end{split}
\end{equation}
\end{enumerate}
\end{theorem}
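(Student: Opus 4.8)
The plan is to construct the solution by a linearization-and-iteration scheme of Cho--Kim type, combined with a regularization of the vacuum, and then to pass to the limit. First I would replace $\rho_0$ by $\rho_0^\delta=\rho_0+\delta$ for $\delta\in(0,1)$ (mollifying $u_0,\theta_0,H_0$ as well if convenient), so that the initial density is strictly positive; the whole point is then to derive every a priori bound \emph{independently of $\delta$} and let $\delta\to0^+$ at the end. For the iteration, starting from a smooth approximation $(u^0,H^0)$, given $(u^{k-1},H^{k-1})=:(v,B)$ I would: (i) solve the linear transport equation $\rho_t+\mathrm{div}(\rho v)=0$ for $\rho=\rho^{k}$, which propagates the $W^{1,q}$ bound and the lower bound $\rho\ge\delta\exp(-\int_0^t|\mathrm{div}\,v|_\infty\,\mathrm ds)$; (ii) solve the equation for $H=H^{k}$ --- a linear parabolic problem with the Dirichlet condition when $0<\sigma<\infty$, which yields $H\in H^1_0\cap H^2\cap L^2_tD^{2,q}$ together with the time-derivative bounds in (\ref{regs}), and, when $\sigma=\infty$, the linear \emph{hyperbolic} equation $H_t+v\cdot\nabla H-H\cdot\nabla v+H\,\mathrm{div}\,v=0$ (equivalent to $H_t-\mathrm{rot}(v\times H)=0$ with $\mathrm{div}\,H=0$), which only propagates the $W^{1,q}$ bound, matching (\ref{regsa}); (iii) solve the linear Lam\'e system $\rho u_t+\rho v\cdot\nabla u+Lu=-\nabla P+\mathrm{rot}\,H\times H$ for $u=u^{k}$ with $u|_{\partial\Omega}=0$, and then the linear parabolic equation for $\theta=\theta^{k}$ with $\partial\theta/\partial n|_{\partial\Omega}=0$, using that $\mathrm{rot}\,H\times H=H\cdot\nabla H-\tfrac12\nabla|H|^2\in L^q$ (since $W^{1,q}\hookrightarrow L^\infty$ for $q>3$), so that elliptic $W^{2,q}$-regularity applies.

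The core of the argument is the uniform a priori estimate: one must produce $T_*>0$ and $C>0$, both independent of $k$ and $\delta$, such that every iterate satisfies (\ref{regs}) (resp. (\ref{regsa})) on $[0,T_*]$ with constant $C$. Here the compatibility conditions (\ref{th79}) are decisive: they say precisely that $(Lu_0+\nabla P_0-\mathrm{rot}\,H_0\times H_0)/\sqrt{\rho_0}\in L^2$ and $(-\kappa\triangle\theta_0-Q(u_0)-\tfrac1\sigma|\mathrm{rot}\,H_0|^2)/\sqrt{\rho_0}\in L^2$, which is exactly the control of $\sqrt{\rho}\,u_t|_{t=0}$ and $\sqrt{\rho}\,\theta_t|_{t=0}$ in $L^2$ needed to start the estimates. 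I would then differentiate the momentum equation in $t$, test with $u_t$, and bound the terms by the $\theta$- and $H$-estimates together with interpolation and the elliptic regularity for $u$; do the analogous thing for the energy equation and $\theta_t$; and close the density and (resistive) magnetic estimates by transport/parabolic bounds. Shrinking $T_*$ makes the resulting Gr\"onwall-type inequality self-improving, which yields the $(k,\delta)$-uniform bound.

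With these bounds in hand, I would show the scheme is contractive in a lower-order norm --- estimating $u^{k+1}-u^{k}$, $H^{k+1}-H^{k}$, $\theta^{k+1}-\theta^{k}$ in $L^\infty_tL^2\cap L^2_tD^1$ and $\rho^{k+1}-\rho^{k}$ in $L^\infty_tL^q$, with the differences absorbed by the uniform higher bounds and a small $T_*$ --- so that the full sequence converges; the limit is a strong solution, and it inherits the regularity (\ref{regs})/(\ref{regsa}) by weak-$*$ lower semicontinuity of the norms. The time-continuity assertions then follow from the equations plus the Aubin--Lions lemma (and, for $\rho$ and for $H$ when $\sigma=\infty$, from continuity for transport equations). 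Letting $\delta\to0^+$ along this scheme gives the solution for genuinely vacuous $\rho_0$, and uniqueness is obtained by running the same energy estimate on the difference of two solutions with identical data.

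I expect the main obstacle to be exactly these $\delta$- and $k$-uniform \emph{higher-order} estimates in the presence of vacuum: bounding $\sqrt{\rho}\,u_t,\sqrt{\rho}\,\theta_t$ in $L^\infty_tL^2$ and $\nabla^2u,\nabla^2\theta$ in $L^2_tL^q$ with no positive lower bound on $\rho$ forces one to work throughout with the weighted quantities and to use (\ref{th79}) at $t=0$ in an essential way. In the non-resistive case $\sigma=\infty$ there is the further difficulty that $H$ enjoys only $C([0,T_*];W^{1,q})$ regularity (no parabolic smoothing), so the whole loop must be closed with $H$ at that level, which requires careful treatment of $\mathrm{rot}\,H\times H$ as a forcing term in the momentum equation and is the reason the term $\tfrac1\sigma|\mathrm{rot}\,H|^2$ in the energy equation has to be treated as a vanishing quantity rather than as a source of regularity.
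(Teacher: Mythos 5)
This theorem is not proved in the present paper at all: it is quoted verbatim from the reference \cite{jishan} (Fan--Yu), whose proof follows exactly the Cho--Choe--Kim strategy you outline --- vacuum regularization $\rho_0+\delta$, a linearized iteration (transport for $\rho$, linear parabolic or pure transport for $H$ according to $\sigma<\infty$ or $\sigma=\infty$, Lam\'e system for $u$, Neumann parabolic problem for $\theta$), $(k,\delta)$-uniform weighted estimates on $\sqrt{\rho}u_t$, $\sqrt{\rho}\theta_t$ initialized via the compatibility conditions (\ref{th79}), contraction in lower-order norms, and passage to the limit. Your sketch is therefore essentially the same approach as the cited proof, and your remarks on the reduced regularity of $H$ when $\sigma=+\infty$ correctly reflect the difference between (\ref{regs}) and (\ref{regsa}).
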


Some analogous existence theorems of local strong solutions to the  compressible Navier-Stokes equations   have been previously established by Choe and Kim in  \cite{CK3}\cite{CK}\cite{guahu}.  In $3$-D space, Huang-Li-Xin obtained the well-posedness of global classical solutions with small energy but possibly large oscillations and vacuum to the  Cauchy problem for isentropic flow in  \cite{HX1}.  Some similar existence results also have been obtained for compressible MHD equations in \cite{jishan}\cite{mhd}.  However,  via the similar arguments used in \cite{y1}\cite{zx}\cite{xy},  it is reasonable to believe that the local strong solution to (\ref{eq:1.2pp})-(\ref{fan1}) with boundary condition (\ref{fan2}) or (\ref{fan3})   may cease to exist globally. 

 So, naturally,  we want to know  the mechanism of blow-up and the structure of possible singularities: what kinds of singularities will form in finite time and what is the main mechanism of possible breakdown of smooth solutions for the $3$-D compressible MHD equations with thermal conductivity? 
The similar question has been studied for the incompressible Euler equation  by Beale-Kato-Majda (BKM) in their pioneering work \cite{TBK}, which showed that the $L^\infty$-bound of vorticity $\nabla \times u$ must blow up. Later, Ponce  \cite{pc} rephrased the BKM-criterion in terms of the deformation tensor $D(u)$. However, the same result as \cite{pc} has been proved by Huang-Li-Xin \cite{hup} for  compressible isentropic Navier-Stokes equations, which can be shown: if  $0 < \overline{T} < +\infty$  is the
maximum time for strong solution, then
\begin{equation}\label{jia2}
\lim \sup_{ T \rightarrow \overline{T}} \int_0^T |D( u)|_{ L^\infty(\Omega)}\text{d}t=\infty,
\end{equation}
and for  the compressible non-isentropic  system, Fan-Jiang-Ou \cite{jishan1} proved that
\begin{equation}\label{jia3}
\lim \sup_{ T \rightarrow \overline{T}} \Big(\int_0^T |\nabla u|_{ L^\infty(\Omega)}\text{d}t+|\theta|_{L^\infty([0,T];L^\infty(\Omega))}\Big)=\infty
\end{equation}
under the   assumption $7\mu > \lambda$  on the viscosity coefficients.
Recently, the similar blow-up criterion has been obtained for the $3$-D non-resistive $(\sigma=+\infty)$ compressible isentropic MHD equations in Xu-Zhang \cite{gerui}:
\begin{equation}\label{jia3}
\lim \sup_{ T \rightarrow \overline{T}} \int_0^T |\nabla u|_{ L^\infty(\Omega)}\text{d}t=\infty.
\end{equation}

Therefore, it is an interesting question to ask whether $L^\infty$ norm of $D(u)$ still controls the possible blow-up  for  strong solutions to IBVP  (\ref{eq:1.2pp})--(\ref{fan3}) as in \cite{hup}\cite{pc} or not? However, under the assumption:
\begin{equation}\label{jia1}
0<\sigma <+\infty,\quad \mu > 4\lambda,
\end{equation}
  some result has  been proved by Lu-Du-Yao  \cite{duyi}, which can be shown: if  $0 < \overline{T} < +\infty$  is the
maximum time for strong solution, then
\begin{equation}\label{xiangdi33}
\lim \sup_{ T \rightarrow \overline{T}} \Big(\int_0^T |\nabla u|_{ L^\infty(\Omega)}\text{d}t+|\theta|_{L^\infty([0,T];L^\infty(\Omega))}\Big)=\infty,
\end{equation}
and the  assumption $\mu > 4\lambda$  has been removed by Chen-Liu \cite{mingtao}.

However,  $D(u)$  is exactly the symmetric part of $\nabla u$:
$$
\nabla u=D(u)+\frac{\nabla u-(\nabla u)^\top}{2}.
$$
So it is clear that the blow-up criterions shown in (\ref{jia3})-(\ref{xiangdi33}) for the compressible MHD equations is much stronger than the one in (\ref{jia2}). This is mainly due to the presences of magnetic momentum flux density tensor
$$
\frac{1}{2}|H|^2I_3-H\otimes H
$$
 in momentum equation $(\ref{eq:1.2pp})_4$,  and the magnetic energy flux density vector
$$
E\times H=\Big(\frac{1}{\sigma}\text{rot}H-u\times H\Big) \times H
$$
 in energy equation $(\ref{eq:1.2pp})_5$. To deal with  both these two nonlinear terms, we need to control the norms $(|H|_\infty, |\nabla H|_{2})$,  which are difficult to be  bounded by $|D(u)|_{L^1(0,T;L^\infty)}$ because of the strong coupling between $u$ and $H$ in magnetic equations $(\ref{eq:1.2pp})_1$,  and the lack of smooth mechanism of $H$ for the case $\sigma=+\infty$. These are unlike those for $(|\rho|_\infty, |\nabla \rho|_{2})$,  which  can be totally determined by $|\text{div}u|_{L^1(0,T;L^\infty)}$ due to the simple scalar hyperbolic structure of the continuity equation $(\ref{eq:1.2pp})_1$. So some new arguments need to be introduced to improve the results obtained above   for system  (\ref{eq:1.2pp}) with thermal conductivity.

However,  via a subtle estimate for the magnetic field $H$ and making full use of the mathematical structure of  the system (\ref{eq:1.2pp}),  our main results in the following two theorems have sucessfully removed  the stringent condition $0<\sigma < +\infty$ appeared in (\ref{jia1}), and  instead of (\ref{xiangdi33}), replacing the term $\nabla u $ with the deformation tensor $D(u)$.

\begin{theorem} \label{th8} Let $0<\sigma<+\infty$ and   $(H,\rho,u,\theta)$  be a strong solution  to IBVP (\ref{eq:1.2pp})- (\ref{fan1}) with (\ref{fan2})   obtained in Theorem \ref{th5}.
Then if $0< \overline{T} <\infty$ is the maximal time for the existence of $(H, \rho,u,\theta)$,  we have

\begin{equation}\label{eq:2.911}
\lim \sup_{ T \rightarrow \overline{T}} \Big (\int_0^T |D( u)|_{L^\infty(\Omega)}\text{d}t+|\theta|_{L^\infty([0,T];L^\infty(\Omega))}\Big)=\infty.
\end{equation}
\end{theorem}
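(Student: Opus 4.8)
The plan is to argue by contradiction: suppose $\overline{T}<\infty$ is the maximal existence time but
\[
M_0 := \lim\sup_{T\to\overline{T}}\Big(\int_0^T |D(u)|_{L^\infty}\,\dif t + |\theta|_{L^\infty([0,T];L^\infty)}\Big) < \infty,
\]
and show that then $(H,\rho,u,\theta)$ satisfies the regularity bounds \eqref{regs} uniformly up to $\overline{T}$, so that the local existence theorem (Theorem \ref{th5}) can be applied with time $\overline{T}$ as initial time, extending the solution past $\overline{T}$ — a contradiction. So the whole proof is a chain of a priori estimates, each controlled ultimately by $M_0$ and the data. I would organize them in increasing order of derivative count: (i) transport/energy estimates for $\rho$ and the basic energy; (ii) the crucial estimate for $\|H\|_{L^\infty}$ and $\|H\|_{H^1}$; (iii) estimates for $\sqrt{\rho}\dot u$, $\nabla u$ in $L^2$ via the momentum equation tested against $\dot u = u_t + u\cdot\nabla u$; (iv) the effective viscous flux and vorticity elliptic estimates to upgrade $\nabla u$ to $L^\infty_t L^2$ and get $\|u\|_{H^2}$; (v) temperature estimates using the bound on $|\theta|_\infty$; (vi) the higher $W^{1,q}$ bound on $\rho$ via Beale–Kato–Majda-type logarithmic inequality; (vii) the $L^2_t D^{2,q}$ bounds for $(H,u,\theta)$. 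The key analytic device for replacing $\nabla u$ by $D(u)$ is Ponce's inequality / the Beale–Kato–Majda commutator estimate: $\|\nabla u\|_{L^\infty} \lesssim (1+\|\operatorname{div}u\|_{L^\infty}+\|\operatorname{rot}u\|_{L^\infty})\ln(e+\|u\|_{H^3}) + \|\nabla u\|_{L^2}$, together with the observation that $\operatorname{div}u$ and $\operatorname{rot}u$ can both be estimated via the effective viscous flux $G = (\lambda+2\mu)\operatorname{div}u - P$ and the vorticity equation, whose right-hand sides involve $D(u)$ (through $\dot u$ and the Lamé structure) and $H$ rather than the antisymmetric part of $\nabla u$ directly.

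The technical heart — and the main obstacle — is step (ii): bounding $|H|_{L^\infty}$ and $|\nabla H|_{L^2}$ by a quantity depending only on $\int_0^T|D(u)|_{L^\infty}\dif t$ and $|\theta|_\infty$ (and the data), without already knowing $\nabla u \in L^1_tL^\infty$. Here I would exploit the magnetic equation $(\ref{eq:1.2pp})_1$ rewritten, using $\operatorname{div}H=0$, in the convective form
\[
H_t + u\cdot\nabla H + (\operatorname{div}u)H - (H\cdot\nabla)u = \frac{1}{\sigma}\triangle H .
\]
Testing against $H$ gives $\frac{d}{dt}|H|_2^2 + \frac{2}{\sigma}|\nabla H|_2^2 \lesssim (\|\operatorname{div}u\|_{L^\infty}+\|\nabla u\|_{L^\infty})|H|_2^2$, but this still has the full gradient. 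The subtle point (the paper advertises "a subtle estimate for the magnetic field") is presumably that the term $(H\cdot\nabla)u$, when tested against $H$, can be integrated by parts onto $H$ — or paired with the $D(u)$-structure — so that only the symmetric part $D(u)$ appears: indeed $\int (H\cdot\nabla)u\cdot H = \int H_iH_j\partial_i u^j = \int H_iH_j D(u)_{ij}$ exactly, because $H\otimes H$ is symmetric. Likewise $(\operatorname{div}u)|H|^2 = \operatorname{tr}D(u)\,|H|^2$. Hence the $L^2$ estimate for $H$ closes with $D(u)$ alone via Grönwall. For $|\nabla H|_2$ one tests the $H$-equation against $-\triangle H$ (or $H_t$), producing terms like $\int \nabla((H\cdot\nabla)u)\cdot\nabla H$ and $\int \nabla((\operatorname{div}u)H)\cdot\nabla H$; these contain a genuine $\nabla^2 u$ which must be absorbed into the parabolic smoothing $\frac{1}{\sigma}|\triangle H|_2^2$ and controlled by $\|u\|_{H^2}$ from step (iv), closing a coupled Grönwall inequality for $|\nabla H|_2^2 + \|\sqrt{\rho}\dot u\|_2^2$. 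For $|H|_\infty$ one can either iterate $L^p$ estimates (Moser-type, using $\operatorname{div}u = \operatorname{tr}D(u)$) or obtain it a posteriori from $|\nabla H|_{L^q}$ via Sobolev embedding once the $W^{1,q}$ bound on $H$ is in hand.

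The remaining steps are then fairly standard compressible-flow bookkeeping. In step (iii)–(iv) one tests $(\ref{eq:1.2pp})_4$ with $\dot u$, handles the Lorentz force $\operatorname{rot}H\times H = (H\cdot\nabla)H - \tfrac12\nabla|H|^2$ by integration by parts so it pairs against $\nabla\dot u$ and $\operatorname{div}\dot u$, bounds the resulting $\nabla u$-terms using that $\nabla u = D(u) + (\text{antisymmetric})$ and that the antisymmetric part is controlled in $L^2$ by $L^2$-energy while in $L^\infty$ it is handled through the vorticity equation — i.e. one never needs $\|\operatorname{rot}u\|_{L^\infty}$ directly, only $\|D(u)\|_{L^\infty}$, because the vorticity satisfies a transport-type equation whose stretching term is $\omega\cdot\nabla u$ with coefficient again expressible through $D(u)$, plus a magnetic source controlled by step (ii). Step (v) uses the maximum-principle-type bound on $\theta$ that is assumed (the $|\theta|_\infty$ term in $M_0$) to control $P\operatorname{div}u$, the dissipation $Q(u)$ and $\tfrac1\sigma|\operatorname{rot}H|^2$ in the heat equation, giving $\sqrt{\rho}\dot\theta\in L^\infty_tL^2$ and $\theta\in L^\infty_tH^2$. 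Step (vi) feeds the now-controlled $\nabla u\in L^1_tL^\infty$ (obtained via Ponce's inequality as above) into the transport equations for $\nabla\rho$ and $\nabla H$ to get the $W^{1,q}$ bounds, and finally elliptic regularity for the Lamé system, for $-\triangle\theta$, and for $\frac1\sigma\triangle H$ yields the $L^2_tD^{2,q}$ bounds, completing \eqref{regs}. The one place where I expect real care is making the $|\nabla H|_2$–$\|\sqrt{\rho}\dot u\|_2$ Grönwall loop genuinely closed: the cross terms coupling $\nabla^2 H$ and $\nabla^2 u$ must be split so that each high-order factor sits under its own dissipation ($\frac1\sigma|\triangle H|_2^2$ respectively the $|\nabla\dot u|_2^2$ that the momentum estimate produces), with the leftover low-order factor carrying the time-integrable weight $1+\|D(u)\|_{L^\infty}$.
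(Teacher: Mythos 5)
Your identification of the central mechanism is exactly the paper's: in the $H$ (and $\rho$) estimates every dangerous contraction is against a symmetric tensor, e.g. $\int (H\cdot\nabla)u\cdot H\,|H|^{r-2}=\int H_iH_jD(u)_{ij}|H|^{r-2}$ and $\text{div}u=\text{tr}\,D(u)$, so the $L^r$/Gr\"onwall argument closes with $|D(u)|_\infty$ alone and $|H|_\infty,|\rho|_\infty$ follow by letting $r\to\infty$ (Lemma \ref{s1}); your coupled Gr\"onwall for $|\nabla H|_2^2$ with the momentum-level dissipation corresponds to Lemma \ref{s4}. From there, however, you take a genuinely different route for the velocity. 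The paper never forms the material derivative, never uses the effective viscous flux, the vorticity equation, or a Beale--Kato--Majda/Ponce logarithmic inequality, and never needs $\nabla u\in L^1_tL^\infty$: it tests the momentum equation against $\rho^{-1}\big(-Lu-\nabla P-\tfrac12\nabla|H|^2+H\cdot\nabla H\big)$, rewrites the $u_t$-terms as exact time derivatives, converts every quadratic $\nabla u$-contraction to $D(u)$ by the same symmetry device (including $\int \text{rot}\,u\cdot D(u)\cdot \text{rot}\,u$ for precisely the term you would route through the vorticity), and absorbs each genuine $\nabla^2u$ factor into $\epsilon|u|^2_{D^2}$. Likewise the $L^2$ and $L^q$ transport estimates for $\nabla\rho$ and $\nabla H$ involve only $(\nabla\rho)^\top D(u)\nabla\rho$ and the symmetric contraction of $\nabla H\otimes\nabla H$ against $D(u)$, plus $|\nabla^2u|_q$ supplied by Lam\'e elliptic regularity and Gr\"onwall; so the BKM step you place at stage (vi) is simply unnecessary.

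The one step of your plan that fails as written is the claim that $\|\text{rot}\,u\|_{L^\infty}$ can be controlled through the vorticity equation ``whose stretching term involves $D(u)$.'' That is Ponce's argument for inviscid Euler; here there is no usable transport equation for $\omega=\text{rot}\,u$: dividing the momentum equation by $\rho$ is impossible in the presence of vacuum, and without dividing, taking the curl gives $\text{curl}(\rho \dot u)=\mu\triangle\omega+\text{curl}(\text{rot}H\times H)$, which admits no characteristics or maximum-principle estimate. The workable version of your detour is elliptic, $\|\omega\|_{W^{1,r}}\leq C\big(|\rho\dot u|_r+\big||H||\nabla H|\big|_r\big)$ with $r>3$ (and similarly for the effective flux), which is available only after the $\nabla u_t\in L^2_tL^2$ and higher $H$-estimates; so your ordering (logarithmic inequality before, or independently of, the $\dot u$-level estimates) would have to be rearranged into the Huang--Li--Xin scheme, and even then it is a heavier road than the paper's. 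Two smaller points: a term of the form $|H|_\infty|\nabla^2u|_2|\nabla H|_2$ cannot be absorbed into $\tfrac1\sigma|\triangle H|^2_2$ --- it must be absorbed by the $u$-dissipation, as you partly acknowledge; and the temperature step is not ``standard bookkeeping'' here, since with vacuum and Neumann data one needs the Poincar\'e-type inequality $|\theta_t|_6\leq C\big(|\sqrt{\rho}\theta_t|_2+(1+|\rho|_2)|\nabla\theta_t|_2\big)$ of the paper's appendix to close the $\sqrt{\rho}\theta_t$ estimate.
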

\begin{remark}\label{rr1}
This  conclusion answers our question positively for compressible isentropic flow, that is, the  $L^\infty$ norm of $D(u)$ still controls the possible blow-up  for the corresponding strong solutions.
Moreover the assumption that $\Omega$ is  bounded  is not essential,  and our argument can be easily applied to the Cauchy problem (see \cite{mingtao}\cite{duyi}) via some slight modifications.  The same blow-up criterion as (\ref{eq:2.911}) is available. Some related result on Serrin-type blow-up criterion can be seen in Huang-Li \cite{huangxin}.
\end{remark}

And when the magnetic diffusion vanishes:
\begin{theorem} \label{th9} Let $\sigma=+\infty$  and  $(H,\rho,u,\theta)$  be a strong  solution  to IBVP (\ref{eq:1.2pp})- (\ref{fan1}) with (\ref{fan3})   obtained in Theorem \ref{th5}.
Then if  $0< \overline{T} <\infty$ is the maximal time for  the  existence of $(H, \rho,u,\theta)$, we have

\begin{equation}\label{eq:2.912}
\lim \sup_{ T \rightarrow \overline{T}} \Big (\int_0^T |D( u)|_{L^\infty(\Omega)}\text{d}t+|\theta|_{L^\infty([0,T];L^\infty(\Omega))}\Big)=\infty.
\end{equation}
\end{theorem}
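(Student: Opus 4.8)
The plan is a standard continuation argument by contradiction. Suppose the quantity in $(\ref{eq:2.912})$ were finite, i.e.\ there is $M_0<\infty$ with $\int_0^T|D(u)|_{L^\infty}\,\dif t+|\theta|_{L^\infty([0,T];L^\infty)}\le M_0$ for every $T<\overline T$. It then suffices to show that all the norms appearing in $(\ref{regsa})$ stay bounded as $T\to\overline T$, for then Theorem \ref{th5}, applied with $\overline T$ as the new initial time, would continue $(H,\rho,u,\theta)$ past $\overline T$ and contradict maximality. So every estimate below must be made with constants depending only on $M_0$, the data, and $\overline T$, never on $T<\overline T$.

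First I would dispatch the low-order quantities. Since $\text{div}\,u=\text{tr}\,D(u)$, integrating the continuity equation along trajectories gives $0\le\rho\le|\rho_0|_{L^\infty}\exp\!\big(C\!\int_0^{\overline T}|\text{div}\,u|_{L^\infty}\,\dif t\big)\le C(M_0)$. For the magnetic field, when $\sigma=+\infty$ equation $(\ref{eq:1.2pp})_1$ together with $\text{div}\,H=0$ becomes the transport-type equation $H_t+(u\cdot\nabla)H-(H\cdot\nabla)u+H\,\text{div}\,u=0$; multiplying by $p|H|^{p-2}H$, integrating over $\Omega$, using $u|_{\partial\Omega}=0$ and — the key point — the symmetry of $H^iH^j$, which turns $\int|H|^{p-2}H^iH^j\partial_ju^i$ into $\int|H|^{p-2}H^iH^jD(u)_{ij}$, one obtains $\frac{d}{dt}|H|_{L^p}\le C|D(u)|_{L^\infty}|H|_{L^p}$ with $C$ independent of $p$. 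Gronwall and $p\to\infty$ then give $|H|_{L^\infty([0,\overline T]\times\Omega)}\le C(M_0)$. Combined with the basic energy identity and the bounds on $\rho,\theta$, this controls $\sup_t\int(\rho|u|^2+\rho\theta+|H|^2)$ and $\int_0^{\overline T}\!|\nabla u|_{L^2}^2\,\dif t$.

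Next I would run the higher-order estimates. Testing the momentum equation with $\dot u=u_t+u\cdot\nabla u$, writing $\nabla P=R\nabla(\rho\theta)$ and $\text{rot}\,H\times H=\text{div}(H\otimes H)-\tfrac12\nabla|H|^2$ and using the bounds already in hand, gives $\sup_t|\nabla u|_{L^2}^2+\int_0^T\!|\sqrt\rho\dot u|_{L^2}^2\,\dif t$; similarly, the temperature equation in the form $\rho\dot\theta-\kappa\triangle\theta+R\rho\theta\,\text{div}\,u=2\mu|D(u)|^2+\lambda(\text{div}\,u)^2$ — the term $\frac1\sigma|\text{rot}\,H|^2$ being absent now, which is in fact a simplification relative to the case $\sigma<\infty$ — tested with $\dot\theta$ gives $\sup_t|\nabla\theta|_{L^2}^2+\int_0^T\!|\sqrt\rho\dot\theta|_{L^2}^2\,\dif t$. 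For $H$, which gains no regularity, I would differentiate its transport equation and test with $|\nabla H|^{q-2}\nabla H$ (and with $\nabla H$ for the $L^2$ bound), getting $\frac{d}{dt}|\nabla H|_{L^q}\le C|\nabla u|_{L^\infty}|\nabla H|_{L^q}+C|H|_{L^\infty}|\nabla^2u|_{L^q}$; the continuity equation gives the analogous bound for $|\nabla\rho|_{L^q}$. The $|\nabla^2u|$ factors are absorbed via the Lam\'e elliptic estimates $|u|_{D^2}\le C(|\rho\dot u|_{L^2}+|\nabla P|_{L^2}+|\text{rot}\,H\times H|_{L^2})$ and $|u|_{D^{2,q}}\le C(|\rho\dot u|_{L^q}+\cdots)$ read off from $(\ref{eq:1.2pp})_4$, and the dangerous $|\nabla u|_{L^\infty}$ is tamed by a Ponce-type logarithmic inequality $|\nabla u|_{L^\infty}\le C\big(1+|D(u)|_{L^\infty}\ln(e+|\nabla^2u|_{L^q})\big)+C|\nabla u|_{L^2}$, valid because $\nabla u$ is obtained from $D(u)$ by a singular integral of order zero. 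Bundling $|\nabla u|_{L^2}^2$, $|\nabla\theta|_{L^2}^2$, $|\nabla H|_{L^q}$, $|\nabla\rho|_{L^q}$, $|\sqrt\rho\dot u|_{L^2}^2$, $|\sqrt\rho\dot\theta|_{L^2}^2$ into a single $\Phi(t)$ should yield a differential inequality $\Phi'(t)\le C(1+|D(u)|_{L^\infty})\,\Phi\ln\Phi+(\text{integrable in }t)$, whence a double-exponential Gronwall bound $\sup_{[0,\overline T]}\Phi<\infty$. A final bootstrap (second-order bounds for $\dot u,\dot\theta$, elliptic regularity, and time-continuity) then recovers all the norms of $(\ref{regsa})$ and closes the contradiction.

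The hard part will be exactly the magnetic field in the non-resistive case: with no parabolic smoothing, $|H|_{L^\infty}$ and $|\nabla H|_{L^q}$ have to be propagated purely through the transport structure of $(\ref{eq:1.2pp})_1$, and — to land on the $D(u)$ criterion rather than the weaker $\nabla u$ criterion — using only $|D(u)|_{L^\infty}$. The $L^p$-bound for $H$ goes through thanks to the symmetry cancellation above, but in the $|\nabla H|_{L^q}$ estimate the full $|\nabla u|_{L^\infty}$ genuinely reappears, so the delicate point is to interlock the logarithmic Sobolev inequality for $\nabla u$ with the elliptic estimates for the Lam\'e operator and the $\dot u$-equation tightly enough that the combined Gronwall inequality still closes on the fixed interval $[0,\overline T]$.
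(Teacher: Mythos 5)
Your skeleton (argue by contradiction and continue past $\overline{T}$ via Theorem \ref{th5}), your $L^\infty$ bounds for $\rho$ and $H$ (using the symmetrization $\big(H\cdot\nabla u\big)\cdot H|H|^{p-2}=\big(H\cdot D(u)\big)\cdot H|H|^{p-2}$), and the energy and first-order estimates all match the paper. The divergence --- and the gap --- is at the decisive step, namely propagating $|\nabla H|_{L^q}$ and $|\nabla\rho|_{L^q}$ with only $\int_0^T|D(u)|_{L^\infty}\,\text{d}t$ available. Your premise that ``the full $|\nabla u|_{L^\infty}$ genuinely reappears'' there is not correct: the same symmetry you exploit for $|H|_{L^p}$ also works at the gradient level. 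In the identities (\ref{zhu20ccnm})--(\ref{zhu20vv}), the only first-order-in-$u$ terms are contractions of $\nabla u$ against $\sum_i\partial_kH^i\partial_jH^i$ or $\sum_i\partial_jH^k\partial_jH^i$ (and, for the density, against $\partial_i\rho\,\partial_j\rho$ in (\ref{zhu20cccc})), which are symmetric in the contracted pair of indices, so only $D(u)$ survives, while the $u\cdot\nabla(\cdot)$ transport terms integrate by parts into $\text{div}u=\mathrm{tr}\,D(u)$; the remaining terms carry a full second derivative of $u$ and are bounded by $|H|_\infty|\nabla H|_q^{q-1}|u|_{D^{2,q}}$, resp.\ $|\rho|_\infty|\nabla\rho|_q^{q-1}|u|_{D^{2,q}}$. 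Since the Lam\'e estimate (\ref{zhu55cc}) gives $|u|_{D^{2,q}}\le C(1+|u_t|_{D^1}+|\nabla\rho|_q+|\nabla H|_q)$ and $\int_0^T|u_t|^2_{D^1}\,\text{d}t\le C$ is secured beforehand in Lemma \ref{lem:4-1cc} (whose proof again only sees $|D(u)|_\infty$ and the time-integrable $\|\nabla u\|^2_1$), the Gronwall inequality for $|\nabla\rho|_q+|\nabla H|_q$ is linear and closes with no endpoint estimate for $\nabla u$ at all; this is Lemma \ref{s7cc}.

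By contrast, your route hinges on the logarithmic inequality $|\nabla u|_{L^\infty}\le C\big(1+|D(u)|_{L^\infty}\ln(e+|\nabla^2u|_{L^q})\big)+C|\nabla u|_{L^2}$ on a bounded domain with the Dirichlet condition, which you assert but do not prove, and on a log-Gronwall closure that you yourself flag as ``the delicate point'' without resolving it: to place $|\nabla^2u|_{L^q}$ inside the logarithm you must already control $|\rho\dot u|_{L^q}$, hence $|\nabla u_t|_{L^2}$ (or $|\nabla\dot u|_{L^2}$) in $L^2_t$, yet you defer exactly those bounds to a ``final bootstrap,'' which is circular unless the estimates are ordered as in the paper. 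A Huang--Li--Xin style argument \cite{hup} along your lines could plausibly be completed, but as written the proposal is incomplete precisely where the theorem is nontrivial, whereas the paper's symmetrization-plus-elliptic-absorption argument is more elementary and is what makes the $D(u)$ criterion (rather than a $\nabla u$ criterion) attainable in the non-resistive case.
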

\begin{remark}
If we only consider the compressible isentropic  flow, Theorem \ref{th9} has answered exactly the same question as above that whether we can replace  $\nabla u$ with the deformation tensor $D(u)$  when $\sigma=+\infty$ in the  blow-up criterion  (\ref{jia3}) or not, which is  firstly raised by Xu-Zhang in \cite{gerui}.
\end{remark}

The rest of  this paper is organized as follows.   In Section $2$,   we give the proof for  (\ref{eq:2.911}) when $0<\sigma <+\infty$, which improves the results obtained in \cite{mingtao}\cite{duyi} via replacing $\nabla u$ with the deformation tensor $D(u)$. In Section $3$,  we show that the same blow-up criterion also holds when magnetic diffusion vanishes, that is $\sigma=+\infty$, which removes  the stringent condition $0<\sigma < +\infty$. Finally, we give an appendix in Section $4$,  which will introduce a Poincar$\acute{\text{e}}$ type inequality
(see Lemma \ref{pang}) to deal with the absolute temperature $\theta$ under the Neumann boundary condition.

\section{Blow-up criterion (\ref{eq:2.911}) for $0<\sigma<+\infty$. }

 We first prove (\ref{eq:2.911}) for $0<\sigma<+\infty$. Let $(H, \rho, u,\theta)$ be the unique strong solution to    IBVP (\ref{eq:1.2pp})--(\ref{fan1}) with boundary condition (\ref{fan2}). We assume that the opposite holds, i.e.,
\begin{equation}\label{we11}
\begin{split}
\lim \sup_{T\mapsto \overline{T}} \big(|D( u)|_{L^1([0,T]; L^\infty(\Omega))}+|\theta|_{L^\infty([0,T];L^\infty(\Omega))}\big)=C_0<\infty.
\end{split}
\end{equation}
Firstly,  based on  $\text{div}H=0$,   there are some formulas for $(H,u)$:
\begin{equation}\label{zhoumou}
\begin{cases}
\displaystyle
\text{rot}(u\times H)=(H\cdot \nabla)u-(u\cdot \nabla)H-H\text{div}u,\ \text{rot}(\text{rot} H)=-\triangle H,\\[8pt]
\displaystyle
\text{rot}H\times H
=\text{div}\Big(H\otimes H-\frac{1}{2}|H|^2I_3\Big)=-\frac{1}{2}\nabla |H|^2+H\cdot \nabla H,\\[8pt]
\displaystyle
Q(u)=\text{div} (u\mathbb{ T})-u\text{div} \mathbb{T}=\frac{\mu}{2}|\nabla u+(\nabla u)^{\top}|^2+\lambda (\text{div}u)^2.
\end{cases}
\end{equation}

In the following, we will use the convention that $C$ denotes a generic finite positive constant only depending on $\mu$, $\lambda$, $\kappa$, $R$, $\Omega$, $|(g_1,g_2)|_2$ and $ \overline{T}$,  and is independent of $\sigma$. We write $C(\alpha)$ to emphasize that $C(\alpha)$ depends on  $\alpha$ if it is really needed, especially for $C(\sigma)$.

Next we need to show some  estimates for  $(H, \rho, u,\theta)$.
By assumption (\ref{we11}), we first show that  both the magnetic field $H$ and the mass density $\rho$ are both uniformly bounded.
 \begin{lemma}\label{s1} For any constant $r\geq 2$, we have
\begin{equation*}
\begin{split}
|\rho(t)|_{\infty}+|H(t)|_{\infty}+\frac{1}{\sigma}\int_0^T \int_{\Omega} |H|^{r-2}|\nabla H|^2\text{d}x\text{d}t \leq C, \quad 0\leq t< T,
\end{split}
\end{equation*}
where the finite constant $C>0$ only depends on $C_0$ and $T$ $(any\  T\in (0,\overline{T}])$.
 \end{lemma}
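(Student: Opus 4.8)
The plan is to get both bounds from energy-type estimates, exploiting that $\text{div}\,u$ and---via the symmetry of the tensor $H\otimes H$---the action of $\nabla u$ on $H$ are controlled by $D(u)$ alone. First I would bound $\rho$ along particle paths: writing $(\ref{eq:1.2pp})_3$ as $\rho_t+u\cdot\nabla\rho=-\rho\,\text{div}\,u$ and integrating along the flow map $X(t;x)$ of $u$ (well defined, with $X(t;\cdot):\Omega\to\Omega$ since $u|_{\partial\Omega}=0$), one gets $\rho(t,X(t;x))=\rho_0(x)\exp\big(-\int_0^t\text{div}\,u(s,X(s;x))\,\dif s\big)$. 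Since $\text{div}\,u$ is the trace of $D(u)$, one has $|\text{div}\,u|_\infty\le C|D(u)|_\infty$ pointwise in $t$, so $|\rho(t)|_\infty\le|\rho_0|_\infty\exp\big(C\int_0^t|D(u)|_\infty\,\dif s\big)\le C$ by (\ref{we11}).

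For the magnetic field, using the first two identities of (\ref{zhoumou}) I rewrite $(\ref{eq:1.2pp})_1$ as
\[
H_t+(u\cdot\nabla)H+H\,\text{div}\,u-(H\cdot\nabla)u=\frac{1}{\sigma}\triangle H ,
\]
then for $r\ge2$ multiply by $|H|^{r-2}H$ and integrate over $\Omega$. Using $u|_{\partial\Omega}=0$, the convection term gives $\int(u\cdot\nabla)H\cdot|H|^{r-2}H\,\dif x=-\frac{1}{r}\int|H|^r\,\text{div}\,u\,\dif x$, which together with $\int H\,\text{div}\,u\cdot|H|^{r-2}H\,\dif x=\int|H|^r\,\text{div}\,u\,\dif x$ leaves $(1-\frac{1}{r})\int|H|^r\,\text{div}\,u\,\dif x$, bounded by $C|D(u)|_\infty\int|H|^r\,\dif x$. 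Using $H|_{\partial\Omega}=0$ from (\ref{fan2}), the diffusion term equals $-\frac{1}{\sigma}\int|H|^{r-2}|\nabla H|^2\,\dif x$ minus a nonnegative term. The crucial term is $-\int(H\cdot\nabla)u\cdot|H|^{r-2}H\,\dif x=-\int|H|^{r-2}\sum_{i,j}H_iH_j\,\partial_iu^{(j)}\,\dif x$: since $H_iH_j$ is symmetric in $(i,j)$, only the symmetric part of $\nabla u$ survives, so this equals $-\int|H|^{r-2}\sum_{i,j}H_iH_j\,D(u)_{ij}\,\dif x$, whose absolute value is $\le|D(u)|_\infty\int|H|^r\,\dif x$. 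Collecting,
\[
\frac{1}{r}\frac{\dif}{\dif t}\int_\Omega|H|^r\,\dif x+\frac{1}{\sigma}\int_\Omega|H|^{r-2}|\nabla H|^2\,\dif x\le C|D(u)|_\infty\int_\Omega|H|^r\,\dif x ,
\]
with $C$ independent of $r$ and $\sigma$. Gronwall then gives $|H(t)|_r\le|H_0|_r\exp\big(C\int_0^t|D(u)|_\infty\,\dif s\big)\le C$, \emph{the exponent being independent of} $r$; letting $r\to\infty$ (legitimate since $H_0\in H^2\hookrightarrow L^\infty$ and $H(t)\in L^\infty$) yields $|H(t)|_\infty\le C$. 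Inserting this back into the last displayed inequality and integrating over $[0,T]$ bounds $\frac{1}{\sigma}\int_0^T\!\int_\Omega|H|^{r-2}|\nabla H|^2\,\dif x\,\dif t$ by the right-hand side, which is $\le C$ by (\ref{we11}).

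The density bound is routine; the substance is the magnetic estimate, where two points are delicate. First, using the symmetry of $H\otimes H$ so that the stretching term $(H\cdot\nabla)u$ only feels $D(u)$ and not the full $\nabla u$: this is exactly what allows the criterion to be stated through the deformation tensor, and it is the only place where $D(u)$ rather than $\nabla u$ is genuinely needed at this stage. Second, organizing the $L^r$ estimate so that, after dividing by $r$, the Gronwall exponent $C\int_0^t|D(u)|_\infty\,\dif s$ is uniform in $r$; this is what makes the limit $r\to\infty$ legitimate and produces a bound for $|H|_\infty$ that is independent of $\sigma$, a uniformity needed later for the case $\sigma=+\infty$.
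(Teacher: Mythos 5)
Your proof is correct and follows essentially the same route as the paper: an $L^r$ estimate for $H$ obtained by multiplying the magnetic equation by $|H|^{r-2}H$, using the symmetry of $H\otimes H$ to replace $\nabla u$ by $D(u)$ in the stretching term, a Gronwall exponent uniform in $r$, and then letting $r\to\infty$. The only (inessential) deviation is that you bound $|\rho|_\infty$ by integrating along particle trajectories, whereas the paper repeats the same $L^r$ argument for the continuity equation; both yield the same bound in terms of $|\mathrm{div}\,u|_{L^1([0,T];L^\infty)}$.
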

 \begin{proof}
Firstly,  multiplying $(\ref{eq:1.2pp})_1$ by $r |H|^{r-2} H$ ($r \geq 2$) and integrating  over $\Omega$ by parts, then we have
\begin{equation}\label{zhumw2}
\begin{split}
& \frac{d}{dt}|H|^r_r+\frac{r(r-1)}{\sigma}\int_{\Omega}|H|^{r-2}|\nabla H|^2\text{d}x\\
=& r \int_{\Omega} \big(H\cdot \nabla u-u\cdot \nabla H-H \text{div}u\big) \cdot H|H|^{r-2} \text{d}x\\
=&r \int_{\Omega} \big(H\cdot D( u)-u\cdot \nabla H-H \text{div}u\big) \cdot H|H|^{r-2} \text{d}x.
\end{split}
\end{equation}
Via integrating by parts, the second term on the right-hand side of (\ref{zhumw2}) can be written as
\begin{equation}\label{zhumw1}
\begin{split}
-r \int_{\Omega} \big(u\cdot \nabla H\big)\cdot H|H|^{r-2} \text{d}x=\int_{\Omega} \text{div}u |H|^r \text{d}x
\end{split}
\end{equation}
which, together with (\ref{zhumw2}), immediately yields
\begin{equation}\label{zhumw3}
\begin{split}
& \frac{d}{dt}|H|^r_r+\frac{r(r-1)}{\sigma}  \int_{\Omega} |H|^{r-2}|\nabla H|^2\text{d}x \leq (2r+1) |D( u)|_{\infty} |H|^r_r.
\end{split}
\end{equation}
So, from $r\geq 2$ and (\ref{zhumw3}), we quickly have
\begin{equation}\label{mou10}
 \frac{d}{dt}|H|_r\leq \frac{ (2r+1)}{r} |D( u)|_{\infty} |H|_r,
\end{equation}
hence, it follows from (\ref{we11})  and (\ref{zhumw3})-(\ref{mou10}) that 
\begin{equation*}
\begin{split}
\sup_{0\leq t \leq T}|H(t)|_{r}+\frac{1}{\sigma} \int_0^T  \int_{\Omega} |H|^{r-2}|\nabla H|^2\text{d}x \text{d}t\leq C, \quad 0\leq t< T,
\end{split}
\end{equation*}
where $C>0$ is independent of $r$. Therefore, letting $r\rightarrow \infty$ in the above inequality leads to the desired estimate of $|H|_{\infty}$. In the same way, we also obtain the bound of $|\rho|_{\infty}$ which indeed depends only on $\|\text{div}u\|_{L^1([0,T];L^{\infty}(\Omega))}$.

 \end{proof}

\begin{remark}\label{lian1}
According to the proof for Lemma \ref{s1},  it is obvious that we can also obtain 
\begin{equation}\label{xue1}
\begin{split}
|\rho(t)|_{\infty}+|H(t)|_{\infty}\leq C, \quad 0\leq t< T
\end{split}
\end{equation}
 where  $C$ is only dependent of $C_0$ and $T$ $(any\  T\in (0,\overline{T}])$, and certainly  is also independent of $\sigma$. That is to say, (\ref{xue1})  also  holds for the case $\sigma=+\infty$ (see Lemma \ref{s1c}).\end{remark}

The next estimate  follows from the standard energy estimate:

  \begin{lemma}\label{s2}
\begin{equation*}
\begin{split}
|\sqrt{\rho}u(t)|^2_{ 2}+|\sqrt{\rho}\theta(t)|^2_{ 2}+\int_{0}^{T}\Big(\frac{1}{\sigma}|\nabla H(t)|^2_{2}+|\nabla u(t)|^2_{2}+|\nabla \theta(t)|^2_{2}\Big)\text{d}t\leq C,\quad 0\leq t< T,
\end{split}
\end{equation*}
where the finite constant $C>0$ only depends on $C_0$ and $T$ $(any\  T\in (0,\overline{T}])$.

 \end{lemma}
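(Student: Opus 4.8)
The plan is to run the classical energy method for system (\ref{eq:1.2pp}), estimating in turn the kinetic energy $|\sqrt{\rho}\,u|_2^2$, the density-weighted temperature norm $|\sqrt{\rho}\,\theta|_2^2$, and the three dissipation integrals, at each step feeding in Lemma \ref{s1} (the $\sigma$-uniform bounds $|\rho(t)|_\infty+|H(t)|_\infty\le C$) together with the standing assumption (\ref{we11}) (which gives $|\theta|_{L^\infty([0,T];L^\infty(\Omega))}\le C_0$), and being careful that every constant is independent of $\sigma$. I would first record the cheap consequences of mass conservation: integrating the continuity equation $(\ref{eq:1.2pp})_3$ over $\Omega$ gives $|\rho(t)|_1=|\rho_0|_1$, hence $|\rho(t)|_2^2\le|\rho(t)|_\infty|\rho(t)|_1\le C$ and $|P(t)|_2=R|\rho\theta|_2\le R|\theta|_\infty|\rho(t)|_2\le C$; moreover, directly from (\ref{we11}) and mass conservation, $|\sqrt{\rho}\,\theta(t)|_2^2=\int_\Omega\rho\theta^2\,\dif x\le|\theta|_\infty^2|\rho(t)|_1\le C$, which already disposes of one of the asserted bounds with no further work.

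Next, for the kinetic energy I would multiply the momentum equation $(\ref{eq:1.2pp})_4$ by $u$ and integrate over $\Omega$. Using the continuity equation the convective part collapses to $\frac12\frac{d}{dt}|\sqrt{\rho}\,u|_2^2$; by the third identity in (\ref{zhoumou}) and $u|_{\partial\Omega}=0$ the viscous term equals $-\int_\Omega Q(u)\,\dif x$, and Korn's inequality together with the ellipticity condition (\ref{eq:1.5}) gives $\int_\Omega Q(u)\,\dif x\ge\mu|\nabla u|_2^2$; the pressure term is $\int_\Omega P\,\dd u\,\dif x\le\varepsilon|\nabla u|_2^2+C_\varepsilon|P|_2^2\le\varepsilon|\nabla u|_2^2+C$; and, writing the Lorentz force in divergence form as in (\ref{zhoumou}) and integrating by parts (using $\dd H=0$ and $u|_{\partial\Omega}=0$), $\int_\Omega(\text{rot}H\times H)\cdot u\,\dif x=-\int_\Omega(H\cdot\nabla u)\cdot H\,\dif x+\frac12\int_\Omega|H|^2\,\dd u\,\dif x$, which is bounded by $C|H|_\infty^2|\nabla u|_1\le\varepsilon|\nabla u|_2^2+C|H|_\infty^4|\Omega|\le\varepsilon|\nabla u|_2^2+C$ by Lemma \ref{s1}. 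Choosing $\varepsilon$ small, absorbing into the dissipation, and integrating in time (the initial energy $|\sqrt{\rho_0}\,u_0|_2^2$ being finite by (\ref{th78})), I obtain $|\sqrt{\rho}\,u(t)|_2^2+\int_0^T\big(|\nabla u|_2^2+\int_\Omega Q(u)\,\dif x\big)\,\dif t\le C$.

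For the temperature I would multiply the energy equation $(\ref{eq:1.2pp})_5$ by $\theta$ and integrate over $\Omega$. The first two terms again collapse, via the continuity equation, to $\frac12\frac{d}{dt}|\sqrt{\rho}\,\theta|_2^2$; the conduction term contributes $+\kappa|\nabla\theta|_2^2$ with no boundary term because $\partial\theta/\partial n|_{\partial\Omega}=0$; and on the right-hand side the three terms $\int_\Omega P\,\dd u\,\theta\,\dif x$, $\int_\Omega Q(u)\,\theta\,\dif x$ and $\frac1\sigma\int_\Omega|\text{rot}H|^2\theta\,\dif x$ are all controlled, using $|\theta|_\infty\le C_0$ and the bounds of the previous step, by $C\big(1+|\nabla u|_2^2+\frac1\sigma|\nabla H|_2^2\big)$ — here one uses $\int_\Omega|Q(u)|\,\dif x\le C|\nabla u|_2^2$, the bound $|\sqrt{\rho}\,\dd u|_2\le C|\nabla u|_2$ for the pressure term, and $|\text{rot}H|_2=|\nabla H|_2$ (valid since $\dd H=0$ and $H\in H^1_0$). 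Since $\int_0^T\big(1+|\nabla u|_2^2\big)\,\dif t\le C$ from the previous step and $\frac1\sigma\int_0^T|\nabla H|_2^2\,\dif t\le C$ is precisely Lemma \ref{s1} with $r=2$, integrating in time gives $\int_0^T|\nabla\theta|_2^2\,\dif t\le C$. Combining this with the magnetic dissipation estimate already contained in Lemma \ref{s1} completes the proof.

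I do not expect a genuine obstacle here — this is the ``standard energy estimate'' — but two points require care. First, every constant must stay independent of $\sigma$; this is automatic because $1/\sigma$ only ever appears multiplying $|\text{rot}H|_2^2$ with the favourable sign, and the corresponding $\sigma$-uniform bound is already supplied by Lemma \ref{s1}. Second, the two terms genuinely new compared with the compressible Navier--Stokes case — $\int_\Omega(\text{rot}H\times H)\cdot u\,\dif x$ in the kinetic estimate and $\frac1\sigma\int_\Omega|\text{rot}H|^2\theta\,\dif x$ in the temperature estimate — must be absorbed using the $\sigma$-independent bound $|H|_\infty\le C$ from Lemma \ref{s1}; once that is available these contributions are no worse than their purely hydrodynamic analogues.
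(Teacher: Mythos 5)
Your proof is correct, but it handles the MHD coupling differently from the paper. The paper's proof of Lemma \ref{s2} performs a \emph{coupled} energy estimate: it multiplies $(\ref{eq:1.2pp})_4$ by $u$, $(\ref{eq:1.2pp})_3$ by $\frac{|u|^2}{2}$ and the induction equation $(\ref{eq:1.2pp})_1$ by $H$, and adds, so that the Lorentz-force term and the induction term cancel exactly via the identity $\int_{\Omega}\text{rot}H\times H\cdot u\,\text{d}x=-\int_{\Omega}\text{rot}(u\times H)\cdot H\,\text{d}x$ (equation (\ref{zhu1})); this produces the dissipation $\frac{1}{\sigma}|\nabla H|^2_2$ directly inside the energy identity (\ref{2}), with no need to estimate $\text{rot}H\times H$ at all. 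You instead work with the momentum equation alone, bound $\int_{\Omega}(\text{rot}H\times H)\cdot u\,\text{d}x$ brutally through the $\sigma$-uniform bound $|H|_\infty\leq C$ of Lemma \ref{s1}, and import the magnetic dissipation bound $\frac{1}{\sigma}\int_0^T|\nabla H|^2_2\,\text{d}t\leq C$ from Lemma \ref{s1} with $r=2$; you also obtain $|\sqrt{\rho}\theta|^2_2\leq C$ trivially from $|\theta|_\infty\leq C_0$ and conservation of $\int_\Omega\rho\,\text{d}x$, rather than through the weighted estimate. Both routes are legitimate here: yours is more elementary and leans harder on the already-established pointwise bounds, while the paper's cancellation argument is more robust (it would survive even without an $L^\infty$ bound on $H$, keeps $|H(t)|_2$ under control as a by-product, and is the structure that generalizes to situations where Lemma \ref{s1} is unavailable). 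Two cosmetic remarks: the lower bound $\int_\Omega Q(u)\,\text{d}x=\mu|\nabla u|^2_2+(\mu+\lambda)|\text{div}u|^2_2\geq\mu|\nabla u|^2_2$ follows for $u\in H^1_0$ from integration by parts and $(\ref{eq:1.5})$ alone, so invoking Korn's inequality is unnecessary; and the identity $|\text{rot}H|_2=|\nabla H|_2$ is likewise not needed, since the pointwise bound $|\text{rot}H|\leq C|\nabla H|$ suffices in the temperature estimate.
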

\begin{proof} Firstly,   multiplying  $(\ref{eq:1.2pp})_4$ by $u$,  $(\ref{eq:1.2pp})_3$ by $\frac{|u|^2}{2}$  and the  $(\ref{eq:1.2pp})_1$ by $H$, then summing them together and integrating the resulting equation over $\Omega$ by parts, we have

\begin{align}
\label{2}
\frac{1}{2}\frac{d}{dt}\int_{\Omega} \Big(\rho |u|^2+H^2\Big) \text{d}x+\int_{\Omega} \Big(\frac{1}{\sigma}|\nabla H|^2+\mu |\nabla u|^2+(\lambda+\mu)(\text{div}u)^2\Big)\text{d}x
=\int_{\Omega}  P \text{div}u\text{d}x,
\end{align}
where we have used the fact:
\begin{equation}\label{zhu1}
\begin{split}
\int_{\Omega} \text{rot}H \times H \cdot u \text{d}x=\int_{\Omega} -\text{rot}(u \times H)  \cdot H        \text{d}x.
\end{split}
\end{equation}
Then according to Holder's inequality  and Young's inequality, we have
\begin{equation}\label{zhu2s}
\begin{split}
\int_{\Omega} P \text{div} u \text{d}x\leq C|P|_2|\nabla u|_2\leq \frac{\mu}{4}|\nabla u|^2_2+C,\\
\end{split}
\end{equation}
which, together with (\ref{2}), means that
\begin{equation}\label{zhu3s}
\begin{split}
&\frac{d}{dt}\int_{\Omega} \Big(\frac{1}{2}\rho |u|^2+\frac{1}{2}H^2\Big) \text{d}x+\int_{\Omega} \Big(\frac{1}{\sigma}|\nabla H|^2+\mu |\nabla u|^2+(\lambda+\mu)(\text{div}u)^2\Big)\text{d}x\leq C.
\end{split}
\end{equation}

Secondly, multiplying  $(\ref{eq:1.2pp})_5$ by $\theta$ and integrating over $\Omega$, we have
\begin{equation}
\label{liu2s}
\begin{split}&\frac{d}{dt}\int_{\Omega} \frac{1}{2}\rho |\theta|^2 \text{d}x+\kappa\int_{\Omega}  |\nabla \theta|^2 \text{d}x\\
\leq& C\int_{\Omega} \rho \theta^2 |\text{div}u|\text{d}x+C\int_{\Omega} |\nabla u|^2 |\theta|\text{d}x+\frac{1}{\sigma}\int_{\Omega} |\text{rot}H|^2 |\theta|\text{d}x\\
\leq & C\Big(1+|\nabla u|^2_2+\frac{1}{\sigma}|\nabla H|^2_2\Big).
\end{split}
\end{equation}
Then from (\ref{zhu3s})-(\ref{liu2s}),  Gronwall's inequality and Lemma \ref{s1},  we obtain the desired conclusions.
\end{proof}
\begin{remark}\label{lian2}
According to the proof for Lemma \ref{s2},  especially for (\ref{liu2s}),  it is obvious that we can also obtain 
\begin{equation}\label{xue2}
\begin{split}
|\sqrt{\rho}u(t)|^2_{ 2}+|\sqrt{\rho}\theta(t)|^2_{ 2}+\int_{0}^{T}\big(|\nabla u(t)|^2_{2}+|\nabla \theta(t)|^2_{2}\big)\text{d}t\leq C,\quad 0\leq t< T,
\end{split}
\end{equation}
where  $C$ is only dependent of $C_0$ and $T$ $(any\  T\in (0,\overline{T}])$, and certainly  is also independent of $\sigma$. That is to say, (\ref{xue2})  also  holds for the case $\sigma=+\infty$ (see Lemma \ref{s2cc}).\end{remark}

The next lemma will give a key estimate on $\nabla H$,  $\nabla \rho$ and $\nabla u$.
  \begin{lemma}\label{s4}
\begin{equation*}
\begin{split}
|\nabla u(t)|^2_{ 2}+|\nabla \rho(t)|^2_{ 2}+|\nabla H(t)|^2_2+\int_0^T\Big( | u|^2_{D^2}+\frac{1}{\sigma}|H|^2_{D^2}\Big)\text{d}t\leq C(\sigma),\quad 0\leq t<  T,
\end{split}
\end{equation*}
where the finite constant  $C(\sigma)>0$ only depends on $C_0$, $\sigma$ and $T$ $(any\  T\in (0,\overline{T}])$.
 \end{lemma}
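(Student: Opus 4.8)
The plan is to carry out the standard energy method, adapted to the coupled hyperbolic (continuity), parabolic ($H$ and $\theta$) and elliptic (Lam\'e) structure of (\ref{eq:1.2pp}), and to close everything by Gronwall's inequality, feeding in the bounds of Lemmas \ref{s1} and \ref{s2} together with the standing assumption (\ref{we11}). The crucial structural point, which is what makes $D(u)$ rather than $\nabla u$ sufficient, concerns the density gradient.

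First I would test the momentum equation $(\ref{eq:1.2pp})_4$, written as $\rho u_t+\rho u\cdot\nabla u+\nabla P=\dd\,\mathbb{T}+\mathrm{rot}\,H\times H$, against $u_t$ and integrate by parts using $u|_{\partial\Omega}=0$, obtaining
\[
\tfrac12\tfrac{d}{dt}\big(\mu|\nabla u|_2^2+(\lambda+\mu)|\dd u|_2^2\big)+|\sqrt{\rho}u_t|_2^2=-\!\int_\Omega\!\rho u\cdot\nabla u\cdot u_t-\!\int_\Omega\!\nabla P\cdot u_t+\!\int_\Omega\!(\mathrm{rot}\,H\times H)\cdot u_t.
\]
The convective term is controlled by $\varepsilon|\sqrt{\rho}u_t|_2^2+C|\rho|_\infty|u|_6^2|\nabla u|_3^2$ together with the interpolation $|\nabla u|_3\le C|\nabla u|_2^{1/2}|u|_{D^2}^{1/2}+C|\nabla u|_2$ (the small multiple of $|u|_{D^2}^2$ that appears will be disposed of below). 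For the pressure term I would shift the time derivative, $-\int\nabla P\cdot u_t=-\tfrac{d}{dt}\int_\Omega P\,\dd u+\int_\Omega P_t\,\dd u$, and substitute $P_t=R(\rho\theta)_t$ from the continuity equation and the temperature equation $(\ref{eq:1.2pp})_5$; using $|\rho|_\infty\le C$, $|H|_\infty\le C$ (Lemma \ref{s1}) and $|\theta|_\infty\le C_0$ (from (\ref{we11})), the surviving terms are of the benign types $|\nabla\rho|_2^2$, $|\nabla\theta|_2^2$, $|D(u)|_\infty|\nabla u|_2^2$ and $\varepsilon|u|_{D^2}^2$. The Lorentz term is treated analogously, writing $\mathrm{rot}\,H\times H=\dd(H\otimes H-\tfrac12|H|^2\mathbb{I}_3)$ from (\ref{zhoumou}) and moving the time derivative onto $H$ via $(\ref{eq:1.2pp})_1$.

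Since $0<\sigma<+\infty$, the $H$-equation is parabolic; testing $(\ref{eq:1.2pp})_1$ against $-\triangle H$, using $H|_{\partial\Omega}=0$ (so that $|H|_{D^2}\le C|\triangle H|_2$) and expanding $\mathrm{rot}(u\times H)=H\cdot\nabla u-u\cdot\nabla H-H\,\dd u$, then interpolating $|\nabla H|_3$ between $|\nabla H|_2$ and $|\triangle H|_2$ and absorbing $\tfrac1\sigma|\triangle H|_2^2$ by Young's inequality, I expect a bound of the form $\tfrac{d}{dt}|\nabla H|_2^2+\tfrac1\sigma|\triangle H|_2^2\le C(\sigma)\big(1+|\nabla u|_2^2\big)|\nabla H|_2^2+C(\sigma)|\nabla u|_2^2$; this absorption is exactly what forces the constant to depend on $\sigma$. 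Next I would differentiate the continuity equation $(\ref{eq:1.2pp})_3$ in $x$ and test with $\nabla\rho$: the transport part integrates by parts to $\tfrac12\int_\Omega\dd u\,|\nabla\rho|^2$, while the stretching term equals $-\int_\Omega(D(u))_{jk}\,\partial_j\rho\,\partial_k\rho\,\dif x$, because the skew-symmetric part of $\nabla u$ is annihilated against the symmetric tensor $\nabla\rho\otimes\nabla\rho$ — this is precisely the identity that lets $|D(u)|_\infty$ play the role normally played by $|\nabla u|_\infty$. Estimating $\int_\Omega\rho\,\nabla\dd u\cdot\nabla\rho$ by H\"older and Young then gives $\tfrac{d}{dt}|\nabla\rho|_2^2\le C(1+|D(u)|_\infty)|\nabla\rho|_2^2+\varepsilon|u|_{D^2}^2$.

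Finally I would regard $(\ref{eq:1.2pp})_4$ as the elliptic system $Lu=-\rho u_t-\rho u\cdot\nabla u-\nabla P+\mathrm{rot}\,H\times H$ and use $H^2$-regularity of the Lam\'e operator (guaranteed by (\ref{eq:1.5}) under the Dirichlet condition), which together with Lemma \ref{s1} and $|\theta|_\infty\le C_0$ yields $|u|_{D^2}\le C\big(|\sqrt{\rho}u_t|_2+|\rho u\cdot\nabla u|_2+|\nabla\rho|_2+|\nabla\theta|_2+|\nabla H|_2+1\big)$ after reabsorbing the convective piece by interpolation. Summing the three differential inequalities, choosing $\varepsilon$ small so that all the $|u|_{D^2}^2$ and $|\sqrt{\rho}u_t|_2^2$ contributions are absorbed, and integrating in time via Gronwall's inequality against the $L^1(0,\overline T)$ bounds $\int_0^{\overline T}\!\big(|D(u)|_\infty+|\nabla u|_2^2+|\nabla\theta|_2^2+\tfrac1\sigma|\nabla H|_2^2\big)\,\dif t\le C$ supplied by Lemma \ref{s2} and (\ref{we11}), one reaches the asserted estimate, with $\int_0^T(|u|_{D^2}^2+\tfrac1\sigma|H|_{D^2}^2)\,\dif t$ coming out as the accumulated dissipation. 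I expect the main difficulty to be exactly this final step: the convective term, the Lorentz force, and the viscous/Ohmic heating $Q(u)+\tfrac1\sigma|\mathrm{rot}\,H|^2$ in $(\ref{eq:1.2pp})_5$ produce terms that are super-linear in $(|\nabla u|_2,|\nabla\rho|_2,|\nabla H|_2)$, and turning the whole right-hand side into a genuine Gronwall inequality requires carefully balancing them against the parabolic/elliptic dissipation and against the time-integrability of $|D(u)|_\infty$, $|\nabla u|_2^2$, $|\nabla\theta|_2^2$ and $\tfrac1\sigma|\nabla H|_2^2$; this is also where the unavoidable dependence of the constant on $\sigma$ creeps in, a restriction that the non-resistive analysis later repairs by a different treatment of $H$.
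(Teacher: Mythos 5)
There is a genuine gap, and it sits exactly at the point you yourself flag at the end: with only $\int_0^T(|D(u)|_\infty+|\nabla u|_2^2+|\nabla\theta|_2^2)\,\text{d}t\le C$ at your disposal, the right-hand sides you actually produce are super-linear in the Gronwall functional $\Gamma=|\nabla u|_2^2+|\nabla\rho|_2^2+|\nabla H|_2^2$, so the ``final step'' is not a Gronwall inequality at all but a Riccati-type one that need not close. Concretely: (i) testing the momentum equation with $u_t$ forces you to estimate $\int_\Omega\rho u\cdot\nabla u\cdot u_t\,\text{d}x$ by H\"older plus the interpolation $|\nabla u|_3\le C|\nabla u|_2^{1/2}|u|_{D^2}^{1/2}+C|\nabla u|_2$, which yields a term of size $|\nabla u|_2^3|u|_{D^2}$, hence after Young a term $C|\nabla u|_2^6\le C|\nabla u|_2^2\,\Gamma^2$; it cannot be absorbed (your dissipation is $|\sqrt{\rho}u_t|_2^2$, not $|u|_{D^2}^2$, and routing $\epsilon|u|_{D^2}^2$ through the Lam\'e estimate re-introduces $|\rho u\cdot\nabla u|_2$ and the same cubic term). (ii) In the magnetic step, bounding $\int_\Omega u\cdot\nabla H\cdot\triangle H\,\text{d}x$ by $|u|_6|\nabla H|_3|\triangle H|_2$, interpolating $|\nabla H|_3$ and absorbing $\tfrac1\sigma|\triangle H|_2^2$ by Young produces $C(\sigma)|\nabla u|_2^4|\nabla H|_2^2$, not the inequality $C(\sigma)(1+|\nabla u|_2^2)|\nabla H|_2^2+C(\sigma)|\nabla u|_2^2$ you assert; again quadratic in $\Gamma$ with a coefficient whose time integral is exactly what you are trying to prove. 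Your $\nabla\rho$ step (the symmetric/skew-symmetric cancellation giving $|D(u)|_\infty$) is correct and coincides with the paper's, but it is the easy third of the lemma.

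The paper's proof resolves precisely these two points, and the devices are not optional refinements but the content of the lemma. For the momentum equation it multiplies $(\ref{eq:1.2pp})_4$ by $\rho^{-1}\big(-Lu-\nabla P-\tfrac12\nabla|H|^2+H\cdot\nabla H\big)$, i.e.\ by the material derivative $u_t+u\cdot\nabla u$: the dissipation then is $\int_\Omega\rho^{-1}|Lu+\nabla P+\ldots|^2\,\text{d}x\gtrsim|u|_{D^2}^2-C(|\nabla\rho|_2^2+|\nabla\theta|_2^2+|\nabla u|_2^2+|\nabla H|_2^2)$, which is what later absorbs every $\epsilon|u|_{D^2}^2$, and the convective part pairs with $\nabla\times\text{rot}\,u$ and $\nabla\,\text{div}\,u$, where the identities $u\times\text{rot}\,u=\tfrac12\nabla|u|^2-u\cdot\nabla u$ and $\nabla\times(a\times b)=\ldots$ turn $L_1,L_2$ into $\tfrac12\int(\text{rot}\,u)^2\text{div}\,u-\int\text{rot}\,u\cdot D(u)\cdot\text{rot}\,u$, bounded by $C|D(u)|_\infty|\nabla u|_2^2$ with no interpolation and no superlinear remainder; the $u_t$-terms are handled, as you propose, by shifting $\tfrac{d}{dt}$ and using the equations for $P_t$ and $H_t$. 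For the magnetic field the paper differentiates $(\ref{eq:1.2pp})_1$ and tests with $\nabla H$ (equivalently: after testing with $-\triangle H$ one must integrate by parts rather than interpolate), so that the transport term contracts $\partial_j u^k$ against the symmetric tensor $\sum_i\partial_jH^i\partial_kH^i$ and only $|D(u)|_\infty|\nabla H|_2^2$ plus $C|H|_\infty|\nabla H|_2|u|_{D^2}\le\epsilon|u|_{D^2}^2+C|\nabla H|_2^2$ survive. With these replacements every right-hand term is of the form $(\Gamma+\tfrac1\sigma|\nabla H|_2^2)(|\nabla u|_2^2+|\nabla\theta|_2^2+|D(u)|_\infty+1)$, and the linear Gronwall argument you envisage does close, with the $(1+\tfrac1\sigma)$ factor in the exponent accounting for the $\sigma$-dependence. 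As written, your proposal does not contain these two ideas, and without them the estimate does not close.
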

\begin{proof}

Firstly, multiplying $(\ref{eq:1.2pp})_4$ by $\rho^{-1}\big(-Lu-\nabla P-\frac{1}{2}\nabla |H|^2+H\cdot \nabla H\big) $ and integrating the resulting equation over $\Omega$,  via (\ref{zhoumou}) we have
\begin{equation}\label{zhu6}
\begin{split}
&\frac{1}{2} \frac{d}{dt}\Big(\mu|\nabla u|^2_2+(\mu+\lambda)|\text{div}u|^2_2\Big)+\int_{\Omega}\rho^{-1}\big(-Lu-\nabla P-\frac{1}{2}\nabla |H|^2+H\cdot \nabla H\big)^2\text{d}x\\
=&-\mu\int_{\Omega} (u\cdot \nabla u) \cdot \nabla \times (\text{rot}u)\text{d}x+(2\mu+\lambda)\int_{\Omega} (u\cdot \nabla u) \cdot  \nabla \text{div}u\text{d}x\\
&-\int_{\Omega} (u\cdot \nabla u) \cdot \nabla P(\rho) \text{d}x-\int_{\Omega} (u \cdot \nabla u) \Big(\frac{1}{2} \nabla |H|^2-H\cdot \nabla H\Big)\text{d}x\\
&-\int_{\Omega} u_t \cdot \nabla P(\rho) \text{d}x-\int_{\Omega} u_t\cdot  \Big(\frac{1}{2} \nabla |H|^2-H\cdot \nabla H\Big)\text{d}x\equiv: \sum_{i=1}^{6} L_i,
\end{split}
\end{equation}
where we have used the fact that $\triangle u=\nabla\text{div}u-\nabla\times \text{rot}u$.

We now estimate each term in (\ref{zhu6}). Due to the fact that $\rho^{-1}\geq C^{-1} >0$,  from the standard $L^2$-theory of elliptic system, we find that
\begin{equation}\label{gaibian}
\begin{split}
&\int_{\Omega}\rho^{-1}\big|Lu+\nabla P+\nabla |H|^2-H  \cdot \nabla H\big|^2\text{d}x\\
\geq& C^{-1}|Lu|^2_2-C(|\nabla P|^2_2+|H|^2_{\infty}|\nabla H^2|_2)\\
\geq& C^{-1}|u|^2_{D^2}-C(|\nabla \rho|^2_2+|\nabla \theta|^2_2+|\nabla u|^2_2+|\nabla H|^2_2),
\end{split}
\end{equation}
where we have used  Lemma $2.2$ and $L$ is a strong elliptic operator. Next according to
\begin{equation*}
\begin{cases}
u\times \text{rot}u=\frac{1}{2}\nabla (| u|^2)-u \cdot \nabla u,\\[8pt]
\nabla\times(a\times b)=(b\cdot \nabla)a-(a \cdot \nabla)b+(\text{div}b)a-(\text{div}a)b,
\end{cases}
\end{equation*}
Holder's inequality, Gagliardo-Nirenberg inequality and Young's inequality,
 we  deduce
\begin{equation}\label{zhu10cvcv}
\begin{split}
|L_1|=&\mu\Big|\int_{\Omega} (u \cdot \nabla u) \cdot \nabla \times (\text{rot} u)\text{d}x\Big|
=\mu\Big| \int_{\Omega} \nabla \times (u\cdot \nabla u) \cdot \text{rot} u\text{d}x\Big|\\
=&\mu\Big| \int_{\Omega} \nabla \times (u\times \text{rot}u) \cdot \text{rot} u\text{d}x\Big|\\
=&\mu\Big| \frac{1}{2}\int_{\Omega} (\text{rot} u)^2\text{div}u\text{d}x-\int_{\Omega} \text{rot} u\cdot D(u)\cdot \text{rot} u \text{d}x\Big|
\leq C|D(u)|_\infty|\nabla u|^2_2,\\
|L_2|=&(2\mu+\lambda)\Big|\int_{\Omega}(u\cdot \nabla u) \cdot  \nabla \text{div}u\text{d}x\Big|\\
=&(2\mu+\lambda)\Big|-\int_{\Omega}\nabla u: (\nabla u)^\top \text{div}u\text{d}x+\frac{1}{2}\int_{\Omega} (\text{div}u)^3\text{d}x\Big|
\leq C|D(u)|_\infty|\nabla u|^2_2,\\
|L_3|=&\Big|\int_{\Omega} (u\cdot \nabla u) \cdot \nabla P \text{d}x\Big| \leq C| u|_6 |\nabla u|_{3}|\nabla P|_2\\
\leq& C(\epsilon)(|\nabla \theta|^2_2+|\nabla \rho|^2_2+1)|\nabla u|^2_2+\epsilon | u|^2_{D^2},\\
L_{4}=&-\int_{\Omega}  (u \cdot \nabla u) \Big(\frac{1}{2} \nabla |H|^2-H\cdot \nabla H\Big) \text{d}x
\leq C|\nabla H|_2|H|_{\infty}|\nabla u|_3|u|_6\\
\leq& C(\epsilon)|H|^2_{\infty}|\nabla H|^2_2|\nabla u|^2_2+\epsilon \|\nabla u\|^2_1
\leq C(\epsilon)(|\nabla H|^2_2+1)|\nabla u|^2_2+\epsilon | u|^2_{D^2},
\end{split}
\end{equation}
\begin{equation}\label{yuelai}
\begin{split}
L_{5}=&-\int_{\Omega} u_t \cdot \nabla P \text{d}x=\frac{d}{dt} \int_{\Omega}  P \text{div}u \text{d}x-\int_{\Omega}  P_t \text{div}u \text{d}x\\
=&\frac{d}{dt} \int_{\Omega}  P \text{div}u \text{d}x-R\int_{\Omega}\rho_t \theta \text{div}u \text{d}x-R\int_{\Omega}\rho \theta_t  \text{div}u\text{d}x\\
\leq &\frac{d}{dt} \int_{\Omega}  P \text{div}u \text{d}x+R\int_{\Omega}\nabla \rho \cdot u \theta \text{div}u \text{d}x+R\int_{\Omega}\rho \theta  (\text{div}u)^2\text{d}x\\
&+R \int_{\Omega}  \rho u\cdot \nabla \theta \text{div}u \text{d}x+R^2 \int_{\Omega}  \rho \theta (\text{div}u)^2 \text{d}x-\kappa R\int_{\Omega}  \triangle\theta  \text{div}u  \text{d}x \\
&-R \int_{\Omega}   Q(u)\text{div}u \text{d}x-\frac{R}{\sigma} \int_{\Omega} |\text{rot} H|^2\text{div}u \text{d}x\\
\leq & \frac{d}{dt} \int_{\Omega}  P \text{div}u \text{d}x+C|\theta|_{\infty}|\nabla \rho|_2 |\nabla u|_2 |\nabla u|_3+C(|D(u)|_\infty+|\rho \theta|_\infty)|\nabla u|^2_2\\
&+C|\rho|_\infty|\nabla \theta|_2 |\nabla u|_2 |\nabla u|_3+C|\nabla \theta|_2|\nabla \text{div}u|_2+C|D(u)|_\infty\Big(\frac{1}{\sigma}|\nabla H|^2_2\Big)\\
\leq &\frac{d}{dt} \int_{\Omega}  P \text{div}u \text{d}x+ C (1+|D(u)|_\infty)\Big(|\nabla u|^2_2+\frac{1}{\sigma}|\nabla H|^2_2\Big)\\
 &+ C(\epsilon) |\nabla u|^2_2(|\nabla \theta|^2_2+|\nabla \rho|^2_2)+\epsilon | u|^2_{D^2}+C,
\end{split}
\end{equation}
where $\epsilon> 0$ is a sufficiently small constant.
And for the last term on the right handside of (\ref{zhu6}), we have
\begin{equation}\label{yuelaia}
\begin{split}
L_{6}=&-\int_{\Omega}  u_t \cdot \Big(\frac{1}{2} \nabla |H|^2-H\cdot \nabla H\Big)\text{d}x\\
=& \frac{1}{2}\frac{d}{dt}\int_{\Omega} |H|^2  \text{div}u \text{d}x-\frac{d}{dt}\int_{\Omega} H \cdot \nabla u \cdot H \text{d}x\\
&-\int_{\Omega}  \text{div}u  H \cdot H_t \text{d}x+\int_{\Omega} H_t \cdot \nabla u \cdot H \text{d}x+\int_{\Omega} H\cdot \nabla u \cdot H_t \text{d}x,
\end{split}
\end{equation}
where  we have used the fact $\text{div}H=0$.  To deal with the last three terms on the right-hand side of $L_6$, we need to use 
$$
H_t=H \cdot \nabla u-u \cdot \nabla H-H\text{div}u+\frac{1}{\sigma}\triangle H.$$
Hence, similarly to the proof of the above estimate, we also have
\begin{equation}\label{zhu12vvvv}
\begin{split}
& -\int_{\Omega}    \text{div}u  H \cdot H_t \text{d}x\\
=&\int_{\Omega} - \text{div}u  H \cdot \Big(H \cdot \nabla u-u \cdot \nabla H-H\text{div}u+\frac{1}{\sigma}\triangle H\Big) \text{d}x\\
\leq & C|H|^2_\infty |\nabla u|^2_2+C|D(u)|_{\infty} |\nabla H|_2 |u|_6 |H|_3\\
&+C|D(u)|_{\infty}\Big(\frac{1}{\sigma}|\nabla H|^2_2\Big)+C|H|_\infty|u|_{D^2} \Big(\frac{1}{\sigma}|\nabla H|_2\Big)\\
\leq & C(\epsilon)(|D(u)|_{\infty}+1)\Big(|\nabla u|^2_2+\Big(1+\frac{1}{\sigma}\Big)|\nabla H|^2_2\Big)+\epsilon |u|^2_{D^2},
\end{split}
\end{equation}
\begin{equation}\label{qian1}
\begin{split}
&\int_{\Omega} H_t \cdot \nabla u \cdot H \text{d}x+\int_{\Omega} H\cdot \nabla u \cdot H_t \text{d}x\\
= &2\int_{\Omega} \Big(H \cdot \nabla u-u \cdot \nabla H-H\text{div}u+\frac{1}{\sigma}\triangle H\Big) \cdot \nabla u \cdot H\text{d}x\\
\leq & C|H|^2_\infty |\nabla u|^2_2+C|u|_{\infty} |\nabla u|_2 |\nabla H|_2 |H|_\infty\\
&+C|u|_{D^2}|H|_\infty \Big(\frac{1}{\sigma}|\nabla H|_2\Big)+C|D(u)|_\infty\Big(\frac{1}{\sigma}|\nabla H|^2_2\Big)\\
\leq & C(\epsilon)\Big(\Big(1+\frac{1}{\sigma}\Big)|\nabla H|^2_{2}+1\Big)(|\nabla u|^2_2+|D(u)|_\infty+1)+\epsilon |u|^2_{D^2},
\end{split}
\end{equation}
where we have used the fact that 
\begin{equation}\label{bao1}
\begin{split}
&\int_{\Omega} \triangle H \cdot \nabla u \cdot H \text{d}x=\int_{\Omega} \sum_{k=1}^3 \sum_{i,j=1}^3\partial_{kk} H^i \partial_ju^i H^j \text{d}x\\
=&-\int_{\Omega} \sum_{k=1}^3 \sum_{i,j=1}^3\Big(\partial_{k} H^i \partial_ju^i \partial_kH^j +\partial_{k} H^i \partial_{jk}u^i H^j \Big)\text{d}x\\
=&-\int_{\Omega} \sum_{k=1}^3 \sum_{i,j=1}^3\Big(\partial_{k} H^i \frac{\partial_ju^i+\partial_iu^j}{2} \partial_kH^j +\partial_{k} H^i \partial_{jk}u^i H^j \Big)\text{d}x.
\end{split}
\end{equation}
Then combining (\ref{zhu6})-(\ref{qian1})  and choosing $\epsilon >0 $ suitably small,   we have
\begin{equation}\label{zhu6qss}
\begin{split}
&\frac{1}{2} \frac{d}{dt}\int_{\Omega}\Big(\mu|\nabla u|^2+(\mu+\lambda)|\text{div}u|^2-\Big(P+\frac{1}{2}|H|^2\Big) \text{div}u+ H\cdot \nabla u\cdot H\Big)\text{d}x+C|\nabla^2 u|^2_2\\
\leq &C\Big(|\nabla u|^2_2+\Big(1+\frac{1}{\sigma}\Big)|\nabla H|^2_2+|\nabla \rho|^2_2+1\Big)(|\nabla u|^2_2+|\nabla \theta|^2_2+|D(u)|_\infty+1).
\end{split}
\end{equation}

Secondly, applying $\nabla$ to  $(\ref{eq:1.2pp})_3$ and multiplying the resulting equations by $2\nabla \rho$,  we have
\begin{equation}\label{zhu20}
\begin{split}
&(|\nabla \rho|^2)_t+\text{div}(|\nabla \rho|^2u)+|\nabla \rho|^2\text{div}u\\
=&-2 (\nabla \rho)^\top \nabla u \nabla \rho-2 \rho \nabla \rho \cdot \nabla \text{div}u\\
=&-2 (\nabla \rho)^\top D(u) \nabla \rho-2 \rho \nabla \rho \cdot \nabla \text{div}u.
\end{split}
\end{equation}
Then integrating (\ref{zhu20}) over $\Omega$, we have
\begin{equation}\label{zhu21}
\begin{split}
\frac{d}{dt}|\nabla \rho|^2_2
\leq& C(\epsilon)(|D( u)|_\infty+1)|\nabla \rho|^2_2+\epsilon |\nabla^2 u|^2_2.
\end{split}
\end{equation}

Thirdly, applying $\nabla $  to $(\ref{eq:1.2pp})_1$, due to 
\begin{equation}\label{mou6}
\begin{split}
A=&\nabla (H\cdot \nabla u)=(\partial_j H \cdot \nabla u^i)_{(ij)}+(H\cdot \nabla \partial_j u^i)_{(ij)},\\
B=&\nabla (u \cdot \nabla H)=(\partial_j u\cdot \nabla H^i)_{(ij)}+ (u\cdot \nabla \partial_j H^i)_{(ij)},\\
C=&\nabla(H \text{div}u)=\nabla H \text{div}u+H \otimes \nabla \text{div}u,\\
D=&\nabla \triangle u: \nabla H=\sum_{i=1}^3\sum_{j=1}^3\partial_j \triangle H^{i}\partial_j H^i,
\end{split}
\end{equation}
then  multiplying the resulting equation $\nabla (\ref{eq:1.2pp})_1$ by $2\nabla H$, we have
\begin{equation}\label{zhu20q}
\begin{split}
&(|\nabla H|^2)_t-2A:\nabla H+2 B: \nabla H+2C : \nabla H=\frac{2}{\sigma}D.
\end{split}
\end{equation}
Then integrating (\ref{zhu20q}) over $\Omega$, due to
\begin{equation}\label{zhu20qq}
\begin{split}
&\int_{\Omega}  A: \nabla H \text{d}x\\
=&\int_{\Omega}  \sum_{j=1}^3\sum_{i=1}^3 \sum_{k=1}^3  \partial_j H^k \partial_k u^i \partial_j H^i \text{d}x+\int_{\Omega}  \sum_{j=1}^3\sum_{i=1}^3 \sum_{k=1}^3  H^k \partial_{kj} u^i \partial_j H^i \text{d}x\\
=&\int_{\Omega}  \sum_{j=1}^3\sum_{i,k=1}^3   \partial_j H^k \frac{(\partial_k u^i +\partial_i u^k)}{2}\partial_j H^i \text{d}x+\int_{\Omega}  \sum_{j=1}^3\sum_{i=1}^3 \sum_{k=1}^3  H^k \partial_{kj} u^i \partial_j H^i \text{d}x\\
\leq& C|D(u)|_\infty |\nabla H|^2_2+C|H|_\infty|\nabla H|_2 |u|_{D^2},\\
&\int_{\Omega}  B: \nabla H \text{d}x\\
=&\int_{\Omega}  \sum_{j=1}^3\sum_{i=1}^3 \sum_{k=1}^3  \partial_j u^k \partial_k H^i \partial_j H^i \text{d}x+\int_{\Omega}  \sum_{j=1}^3\sum_{i=1}^3 \sum_{k=1}^3  u^k  \partial_{kj} H^i \partial_j H^i \text{d}x\\
=&\int_{\Omega}  \sum_{i=1}^3\sum_{j,k=1}^3    \partial_k H^i \frac{(\partial_j u^k+\partial_k u^j)}{2} \partial_j H^i \text{d}x+\frac{1}{2}\int_{\Omega}  \sum_{j=1}^3\sum_{i=1}^3 \sum_{k=1}^3  u^k  \partial_{k} ( \partial_j H^i)^2 \text{d}x\\
\leq& C|D(u)|_\infty |\nabla H|^2_2,\\
&\int_{\Omega}  C: \nabla H \text{d}x
=\int_{\Omega} \big(\text{div}u|\nabla H|^2+H\otimes \nabla \text{div}u :\nabla H\big) \text{d}x\\
\leq& C|D(u)|_\infty |\nabla H|^2_2+C|H|_\infty|\nabla H|_2 |u|_{D^2},\\
&\int_{\Omega} D\text{d}x\\
=&\int_{\Omega} \sum_{k=1}^3\sum_{i=1}^3\sum_{j=1}^3 \partial_j \partial_{kk} H^{i}\partial_j H^i \text{d}x=-\int_{\Omega} \sum_{i=1}^3\sum_{k,j=1}^3 | \partial_{jk} H^{i}|^2 \text{d}x=-|H|^2_{D^2},
\end{split}
\end{equation}
 we quickly obtain the following estimate:
\begin{equation}\label{zhu21q}
\begin{split}
\frac{d}{dt}|\nabla H|^2_2+\frac{2}{\sigma}| H|^2_{D^2}
\leq& C(\epsilon)(|D( u)|_\infty+1)|\nabla H|^2_2+\epsilon |\nabla^2 u|^2_2.
\end{split}
\end{equation}
Now we denote 
$
\Gamma=\mu|\nabla u|^2+(\mu+\lambda)|\text{div}u|^2+|\nabla \rho(t)|^2_{ 2}+|\nabla H(t)|^2_{ 2}
$,
then adding (\ref{zhu21}) and (\ref{zhu21q}) to (\ref{zhu6qss}),  and choosing $\epsilon >0 $ suitably small,  we deduce that 
\begin{equation}\label{zhugai1}
\begin{split}
& \frac{d}{dt}\Gamma+\Big(| u|^2_{D^2}+\frac{1}{\sigma}| H|^2_{D^2}\Big)\\
\leq & C\frac{d}{dt}\int_{\Omega}\Big(\Big(P+\frac{1}{2}|H|^2\Big) \text{div}u- H\cdot \nabla u\cdot H\Big)\text{d}x \\
&+C\Big(\Gamma+\frac{1}{\sigma}|\nabla H|^2_2\Big)(|\nabla u|^2_2+|\nabla \theta|^2_2+|D(u)|_\infty+1).
\end{split}
\end{equation}

Then from Gronwall's inequality we immediately obtain
\begin{equation}\label{zhugai2}
\begin{split}
&|\nabla u(t)|^2_{ 2}+|\nabla \rho(t)|^2_{ 2}+|\nabla H(t)|^2_{ 2}+\int_0^t\Big( | u|^2_{D^2}+\frac{1}{\sigma}|H|^2_{D^2}\Big)\text{d}t\\
\leq & C\exp \Big(\Big(1+\frac{1}{\sigma}\Big) \int_0^t(|\nabla u|^2_2+|\nabla \theta|^2_2+|D(u)|_\infty+1) \text{d}s\Big) \leq C(\sigma).
\end{split}
\end{equation}

 \end{proof}

\begin{remark}\label{lian3}
According to    the proof for Lemma \ref{s2}, especially for (\ref{zhugai1})-(\ref{zhugai2}),  it is obvious that we can also obtain 
\begin{equation}\label{xue3}
\begin{split}
|\nabla u(t)|^2_{ 2}+|\nabla \rho(t)|^2_{ 2}+|\nabla H(t)|^2_2+\int_0^T| u|^2_{D^2}\text{d}t\leq C,\quad 0\leq t<  T,
\end{split}
\end{equation}
where  $C$ is only dependent of $C_0$ and $T$ $(any\  T\in (0,\overline{T}])$, and certainly  is also independent of $\sigma$. That is to say, (\ref{xue3})  also  holds for the case $\sigma=+\infty$ (see Lemma \ref{s4cc}).\end{remark}

Next, we proceed to improve the regularity of $H$, $\rho$, $u$ and $\theta$. To this end, we first give some estimate on the terms $\nabla^2 H$ and  $\nabla^2 u$ based on the above estimates.

 \begin{lemma}\label{lem:4-1}
\begin{equation*}
\begin{split}
&|(H,u)(t)|^2_{  D^2}+|\sqrt{\rho} u_t(t)|^2_{2} +|H_t(t)|^2_2+|\nabla \theta(t)|^2_2+|\rho_t(t)|^2_2\\
&+ \int_{0}^{T}\big( |u_t|^2_{D^1}+ |\sqrt{\rho}\theta_t|^2_{2}+ |H_t|^2_{D^1}+ |\theta|^2_{D^2}\big)\text{d}s\leq C(\sigma), \quad 0\leq t \leq T,
\end{split}
\end{equation*}
where the finite constant  $C(\sigma)>0$ only depends on $C_0$, $\sigma$ and $T$ $(any\  T\in (0,\overline{T}])$.

 \end{lemma}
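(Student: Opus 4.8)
The plan is to run time-differentiated energy estimates for the three evolution equations $(\ref{eq:1.2pp})_1$, $(\ref{eq:1.2pp})_4$, $(\ref{eq:1.2pp})_5$, to close the second-order norms by elliptic regularity, and to conclude by Gronwall using the integrability of $|\nabla u|_2^2$, $|\nabla\theta|_2^2$, $|D(u)|_\infty$ and $|u|_{D^2}^2$ furnished by Lemmas \ref{s1}, \ref{s2}, \ref{s4} together with the contradiction hypothesis (\ref{we11}).

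First I would differentiate the momentum equation $(\ref{eq:1.2pp})_4$ in $t$, multiply by $u_t$, integrate over $\Omega$ using $u_t|_{\partial\Omega}=0$, and rewrite the terms produced by $\rho_t$ via the continuity equation. After integration by parts this gives
\[
\frac12\frac{d}{dt}\int_\Omega\rho|u_t|^2+\mu\int_\Omega|\nabla u_t|^2+(\mu+\lambda)\int_\Omega(\text{div}u_t)^2\le (\text{convective terms})+\int_\Omega P_t\,\text{div}u_t+(\text{magnetic terms}).
\]
The convective terms are handled by Hölder, Gagliardo–Nirenberg and Young, each time paying an $\epsilon|\nabla u_t|_2^2+\epsilon|u|_{D^2}^2$; the pressure term is split through $P_t=R\rho_t\theta+R\rho\theta_t=-R\,\text{div}(\rho u)\theta+R\rho\theta_t$, the bad piece $\int R\rho\theta_t\,\text{div}u_t$ being absorbed into $\epsilon|\nabla u_t|_2^2+C|\rho|_\infty|\sqrt{\rho}\theta_t|_2^2$; and the magnetic terms $\int(H\cdot\nabla H-\tfrac12\nabla|H|^2)_t\cdot u_t$, after substituting $H_t=H\cdot\nabla u-u\cdot\nabla H-H\,\text{div}u+\tfrac1\sigma\triangle H$ and integrating by parts, reduce to quantities controlled by $|H|_\infty$, $|\nabla H|_2$, $|\nabla H_t|_2$ and $|u|_{D^2}$ with small coefficients. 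Symmetrically I would differentiate $(\ref{eq:1.2pp})_5$ in $t$ and multiply by $\theta_t$, obtaining an inequality for $|\sqrt{\rho}\theta_t|_2^2$ with dissipation $\kappa|\nabla\theta_t|_2^2$ (here the term $\tfrac2\sigma\int\text{rot}H\cdot\text{rot}H_t\,\theta_t$ is controlled using $|\theta|_\infty\le C_0$ from (\ref{we11})), and differentiate $(\ref{eq:1.2pp})_1$ in $t$ and multiply by $H_t$, obtaining an inequality for $|H_t|_2^2$ with dissipation $\tfrac1\sigma|\nabla H_t|_2^2$.

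Next I would close the $D^2$ norms by elliptic regularity. Writing $(\ref{eq:1.2pp})_4$ as $Lu=-\rho u_t-\rho u\cdot\nabla u-\nabla P+\text{rot}H\times H$ and using $\mu>0$, $\lambda+\tfrac23\mu\ge0$ gives $|u|_{D^2}^2\le C(|\sqrt{\rho}u_t|_2^2+1)$ up to lower-order terms already controlled by Lemma \ref{s4}; writing $(\ref{eq:1.2pp})_1$ as $\tfrac1\sigma\triangle H=H_t-H\cdot\nabla u+u\cdot\nabla H+H\,\text{div}u$ gives $|H|_{D^2}^2\le C(\sigma)(|H_t|_2^2+1)$, which is precisely where the dependence on $\sigma$ enters; writing $(\ref{eq:1.2pp})_5$ as $\kappa\triangle\theta=\rho\theta_t+\rho u\cdot\nabla\theta+P\,\text{div}u-Q(u)-\tfrac1\sigma|\text{rot}H|^2$, together with the Neumann condition and the Poincaré-type inequality of Lemma \ref{pang}, gives $|\theta|_{D^2}^2\le C(|\sqrt{\rho}\theta_t|_2^2+1)$ up to controlled terms; and $|\rho_t|_2\le C(|\nabla\rho|_2|u|_\infty+|\rho|_\infty|\nabla u|_2)$ follows directly from the continuity equation and Lemma \ref{s4}.

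Finally, I would add the three differentiated estimates, choose $\epsilon$ small to absorb all the $|\nabla u_t|_2^2$, $|\nabla\theta_t|_2^2$, $|\nabla H_t|_2^2$, $|u|_{D^2}^2$, $|H|_{D^2}^2$, $|\theta|_{D^2}^2$ terms on the right into the left, and apply Gronwall; the coefficient multiplying the accumulated quantity is of the form $1+|D(u)|_\infty+|\nabla u|_2^2+|\nabla\theta|_2^2+|u|_{D^2}^2+(1+\tfrac1\sigma)|\nabla H|_2^2$, which belongs to $L^1(0,T)$ by (\ref{we11}) and Lemmas \ref{s1}, \ref{s2}, \ref{s4}. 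The initial data quantities $|\sqrt{\rho}u_t(0)|_2$, $|\sqrt{\rho}\theta_t(0)|_2$, $|H_t(0)|_2$ are finite thanks to the compatibility conditions (\ref{th79}) (for instance $\sqrt{\rho_0}u_t(0)=-\sqrt{\rho_0}\,u_0\cdot\nabla u_0-g_1\in L^2$) and $H_0\in H^2$. I expect the main difficulty to be the bookkeeping of the magnetic coupling terms — especially those in which $\triangle H$ reappears after substituting $H_t$ — and ensuring that each such term carries either a factor integrable in time or a small constant times a dissipation/$D^2$ term; the $\sigma$-dependent elliptic estimate for $H$ is exactly what prevents the final constant from being uniform in $\sigma$.
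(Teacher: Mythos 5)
Your treatment of the time-differentiated momentum and magnetic equations (test with $u_t$ and $H_t$, use the elliptic estimates $|u|_{D^2}\le C(|\sqrt{\rho}u_t|_2+|\nabla\theta|_2+1)$, $|H|_{D^2}\le C(\sigma)(|H_t|_2+1)$, and the compatibility conditions for the initial values) is exactly the paper's. The gap is in the temperature part. You propose to differentiate $(\ref{eq:1.2pp})_5$ in $t$ and test with $\theta_t$, but at this stage that estimate does not close: the terms $\int_\Omega Q(u)_t\,\theta_t\,\text{d}x$ and $\frac{1}{\sigma}\int_\Omega(|\text{rot}H|^2)_t\,\theta_t\,\text{d}x$ are bounded by $C|\nabla u|_3|\nabla u_t|_2|\theta_t|_6$ and $C|\nabla H|_3|\nabla H_t|_2|\theta_t|_6$, and the coefficients $|\nabla u|_3$, $|\nabla H|_3$ are precisely what is being proved (they require the pointwise-in-time $D^2$ bounds of this lemma). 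They are neither small enough to be absorbed into the dissipations $\mu|\nabla u_t|_2^2$, $\frac1\sigma|\nabla H_t|_2^2$ nor of the Gronwall form ``$L^1$-in-time coefficient times the accumulated quantity'' that your final list $1+|D(u)|_\infty+|\nabla u|_2^2+|\nabla\theta|_2^2+|u|_{D^2}^2+(1+\frac1\sigma)|\nabla H|_2^2$ suggests; one gets instead products of an uncontrolled coefficient with a dissipation term. Your parenthetical remark that $\frac2\sigma\int\text{rot}H\cdot\text{rot}H_t\,\theta_t$ is ``controlled using $|\theta|_\infty\le C_0$'' confuses $\theta$ with $\theta_t$: the $L^\infty$ bound on $\theta$ does nothing for that term as written.

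That $L^\infty$ bound is, however, the key to the paper's (simpler) route for this lemma: test the \emph{undifferentiated} energy equation with $\theta_t$, and integrate by parts in time, writing $\int Q(u)\theta_t=\frac{d}{dt}\int Q(u)\theta-\int Q(u)_t\theta$ and $\frac1\sigma\int|\text{rot}H|^2\theta_t=\frac1\sigma\frac{d}{dt}\int|\text{rot}H|^2\theta-\frac1\sigma\int(|\text{rot}H|^2)_t\theta$. Then the differentiated factors are paired with the bounded $\theta$ (via (\ref{we11})) instead of $\theta_t$, giving $C|\nabla u|_2|\nabla u_t|_2$ and $C(\sigma)|\nabla H|_2|\nabla H_t|_2$, which are $\epsilon$-absorbable into the $|\nabla u_t|_2^2$ and $\frac1\sigma|\nabla H_t|_2^2$ dissipations; the left-hand side also produces $\frac{\kappa}{2}\frac{d}{dt}|\nabla\theta|_2^2$, which directly yields the pointwise bound on $|\nabla\theta(t)|_2$ required by the statement, and $\int_0^T|\sqrt{\rho}\theta_t|_2^2$ comes out as the dissipation. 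The time-differentiated temperature estimate you sketch is carried out only afterwards (it is the paper's Lemma \ref{lem:4-1zx}), once $|u(t)|_{D^2}$, $|H(t)|_{D^2}$ and $\int_0^T(|\nabla u_t|_2^2+\frac1\sigma|\nabla H_t|_2^2)\,\text{d}t$ are available to control exactly those coupling terms. So either adopt the integration-by-parts-in-time device for the two quadratic source terms, or restructure your argument into this two-step order; as written, the single combined Gronwall does not close.
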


 \begin{proof}

From system $ (\ref{eq:1.2pp})$, (\ref{zhoumou}) and the standard regularity estimate for elliptic equations,  we have
\begin{equation}\label{mousuncc}
\begin{split}
|H|_{D^2}\leq& C(\sigma)(|H_t|_2+|\text{rot}(u\times H)|_2+|\nabla H|_2)\\
\leq &C(\sigma) (|H_t|_2+|H|_\infty|\nabla u|_2+|\nabla u|_2|\nabla H|^{\frac{1}{2}}_2|\nabla H|^{\frac{1}{2}}_6)\\
|u|_{D^2}\leq& C(|\rho u_t|_2+|\rho u\cdot \nabla u |_2+|\nabla P|_2+|\text{rot}H\times H|_2+|\nabla u|_2)\\
\leq & C(|\rho|^{\frac{1}{2}}_\infty|\sqrt{\rho}u_t|_2+|\rho|_\infty| u|_6|\nabla u|^{\frac{1}{2}}_2|\nabla u|^{\frac{1}{2}}_6)\\
&+C(|\rho|_\infty|\nabla \theta|_2+|\theta|_\infty|\nabla \rho|_2+|H|_\infty|\nabla H|_2),\\
|\theta|_{D^2}\leq &C(|\rho \theta_t|_2+|\rho u\cdot \nabla \theta |_2+|P \text{div}u |_2+|Q(u)|_2+|\nabla \theta|_2)+C\frac{1}{\sigma}||\text{rot}H|^2|_2\\
\leq & C(|\rho|^{\frac{1}{2}}_\infty|\sqrt{\rho}\theta_t|_2+|\rho|_\infty| u|_6|\nabla \theta|^{\frac{1}{2}}_2|\nabla \theta|^{\frac{1}{2}}_6)\\
&+C|\nabla u|_6|\nabla u|_3+C|\rho|_\infty|\theta|_\infty|\nabla u|_2+C\frac{1}{\sigma}|\nabla H|_3|\nabla H|_6,\\
  |\rho_t|_2\leq& C(|\rho \text{div}u |_2+|u\cdot \nabla \rho|_2)\leq C(|\rho|_\infty| \text{div}u |_2+|u|_\infty|\nabla \rho|_2),
\end{split}
\end{equation}
which,  together with    Lemmas \ref{s1}-\ref{s4} and Yong's inequality,  immediately 
implies  that 
\begin{equation}\label{mousun}
\begin{split}
|H|_{D^2}\leq& C(\sigma)(|H_t|_2+1),\  |\rho_t|_2\leq C(\sigma)(1+|u|_\infty)\leq C(\sigma)(1+\|\nabla u\|_1),\\
|u|_{D^2}\leq& C(\sigma)(|\sqrt{\rho}u_t|_2+|\nabla \theta|_2+1),\\
|\theta|_{D^2}\leq& C(\sigma)(|\sqrt{\rho}\theta_t|_2+|\nabla \theta|_2+|\nabla u|_6|\nabla u|_3+|\nabla H|_6|\nabla H|_3+1).
\end{split}
\end{equation}

Next differentiating $ (\ref{eq:1.2pp})_4$ with respect to $t$, we have
\begin{equation}\label{gh78}
\begin{split}
\rho u_{tt}+Lu_t=-\rho_tu_t -\rho_t u\cdot\nabla u-\rho u_t\cdot\nabla u-\rho u\cdot\nabla u_t-\nabla P_t
 +(\text{rot}H\times H)_t.
\end{split}
\end{equation}
Multiplying (\ref{gh78}) by $u_t$ and integrating the resulting equation over $\Omega$, we have
\begin{equation}\label{zhen4}
\begin{split}
&\frac{1}{2}\frac{d}{dt}\int_{\Omega}\rho |u_t|^2 \text{d}x+\int_{\Omega}(\mu|\nabla u_t|^2+(\lambda+\mu)(\text{div}u_t)^2) \text{d}x\\
=&  -\int_{\Omega}( \rho u \cdot \nabla  |u_t|^2- \rho u \nabla ( u \cdot \nabla u \cdot u_t)- \rho u_t \cdot \nabla u \cdot u_t+ P_t \text{div} u_t)\text{d}x\\
& +\int_{\Omega}H  \cdot H_t \text{div} u_t\text{d}x-\int_{\Omega}\big(H \cdot \nabla u_t \cdot H_t+H_t \nabla u_t \cdot H\big)\text{d}x
\equiv:\sum_{i=7}^{12}L_i.
\end{split}
\end{equation}

According to Lemmas \ref{we11}-\ref{s4}, Holder's inequality, Gagliardo-Nirenberg inequality and Young's inequality,  we deduce that
\begin{equation}\label{zhou6}
\begin{split}
L_7=&-\int_{\Omega} \rho u \cdot \nabla  |u_t|^2\text{d}x\\
\leq & C|\rho|^{\frac{1}{2}}_{\infty}|u|_{\infty}|\sqrt{\rho} u_t|_{2}|\nabla u_t|_{2}\leq C\|\nabla u\|^2_1|\sqrt{\rho} u_t|^2_{2}+\frac{\mu}{10} |\nabla  u_t|^2_2, \\
L_8=& -\int_{\Omega} \rho u \nabla ( u \cdot \nabla u \cdot u_t)\text{d}x\\
 \leq& C|\rho|_\infty\int_{\Omega} \big(| u|  |\nabla u|^2 |u_t|+| u|^2  |\nabla^2 u| |u_t|+| u|^2  |\nabla u| |\nabla u_t| \big)\text{d}x\\
\leq& C |u_t|_6 ||\nabla u|^2|_{\frac{3}{2}} |u|_{6}+C ||u|^2|_{3} |\nabla^2 u|_{2} |u_t|_{6}+C||u|^2|_{3} |\nabla u|_{6}  | \nabla u_t|_{2}\\
\leq& C \big( |\nabla u|^2_{3} |\nabla u|_{2}+ |\nabla u|^2_{2} \|\nabla u\|_{1}  | \big)|\nabla u_t|_{2}\\
\leq & C(\sigma)\|\nabla u\|_{1}| \nabla u_t|_{2} \leq \frac{\mu}{10}|\nabla u_t|^2_{2}+C(\sigma)\|\nabla u\|^2_{1},
\end{split}
\end{equation}
where we have used the fact that 
\begin{equation}\label{mou5}
||u|^2|_{3} \leq C|u|^2_{6}  \leq C |\nabla u|^2_{2},\quad |\nabla u|^2_{3} \leq C|\nabla u|_{2} |\nabla u|_{6} \leq C |\nabla u|_{2} \|\nabla u\|_{1}.
\end{equation}

And similarly, we also have 

\begin{equation}\label{mou4}
\begin{split}
L_9=&-\int_{\Omega} \rho u_t \cdot \nabla u \cdot u_t\text{d}x  \leq C |D(u)|_\infty |\sqrt{\rho}u_t|^2_{2},\\
L_{10}=&\int_{\Omega} P_t \text{div} u_t\text{d}x =R\int_{\Omega}(\rho_t \theta+\rho \theta_t)  \text{div} u_t \text{d}x\\
\leq  & \frac{\mu}{10}|\nabla u_t|^2_{2}+C(|\rho_t|^2_2+|\sqrt{\rho}\theta_t|^2_2),\\
L_{11}+L_{12}=&
\int_{\Omega}H  \cdot H_t \text{div} u_t\text{d}x-\int_{\Omega}\big(H \cdot \nabla u_t \cdot H_t+H_t\cdot \nabla u_t \cdot H\big)\text{d}x\\
\leq & C|H|_\infty |H_t|_2|\nabla u_t|_2\leq  \frac{\mu}{10}|\nabla u_t|^2_{2}+C|H_t|^2_{2}.
\end{split}
\end{equation}
Then combining the above estimates (\ref{zhou6})-(\ref{mou4}),  from (\ref{zhen4}) and (\ref{mousun}) we have
\begin{equation}\label{zhen5gvsd}
\begin{split}
&\frac{1}{2}\frac{d}{dt}\int_{\Omega}\rho |u_t|^2 \text{d}x+\int_{\Omega}|\nabla u_t|^2\text{d}x\\
\leq & C(\sigma)(\|\nabla u\|^2_1+|D(u)|_\infty+1)(|\sqrt{\rho}u_t|^2_{2}+1)+C|H_t|^2_2+C|\sqrt{\rho}\theta_t|^2_2.
\end{split}
\end{equation}

Secondly, multiplying  $ (\ref{eq:1.2pp})_5$ by $\theta_t$ and integrating over $\Omega$, we have

\begin{equation}
\label{liu2ssccbb}
\begin{split}
&\frac{\kappa}{2}\frac{d}{dt}\int_{\Omega} |\nabla\theta|^2 \text{d}x+\int_{\Omega} \rho |\theta_t|^2 \text{d}x\\
=& -\int_{\Omega} \rho u\cdot \nabla \theta \theta_t\text{d}x-\int_{\Omega} P\text{div}u \theta_t\text{d}x\\
&+\int_{\Omega} Q(u) \theta_t\text{d}x+\frac{1}{\sigma}\int_{\Omega} |\text{rot}H|^2 |\theta_t|\text{d}x
= \sum_{i=13}^{16} L_i.
\end{split}
\end{equation}
According to Lemmas \ref{we11}-\ref{s4}, Holder's inequality, Gagliardo-Nirenberg inequality and Young's inequality,  we deduce that
\begin{equation}\label{zhou6ccss}
\begin{split}
L_{13}=&-\int_{\Omega}\rho u\cdot \nabla \theta \theta_t\text{d}x\\
\leq&  C|\rho|^{\frac{1}{2}}_{\infty}|u|_{\infty}|\sqrt{\rho} \theta_t|_{2}|\nabla \theta|_{2}
\leq \frac{1}{4} |\sqrt{\rho} \theta_t|_{2}+C\|\nabla u\|^2_1 |\nabla  \theta|^2_2, \\
L_{14}=& -\int_{\Omega} P\text{div}u \theta_t \text{d}x\\
 \leq& C|\rho|^{\frac{1}{2}}_{\infty}|\theta|_{\infty}|\sqrt{\rho} \theta_t|_{2}|\nabla u|_{2}
\leq \frac{1}{4} |\sqrt{\rho} \theta_t|_{2}+C |\nabla  u|^2_2,\\
L_{15}=&\int_{\Omega} Q(u) \theta_t\text{d}x=\frac{d}{dt}\int_{\Omega} Q(u) \theta\text{d}x-\int_{\Omega} Q(u)_t \theta\text{d}x\\
\leq &\frac{d}{dt}\int_{\Omega} Q(u) \theta\text{d}x+C|\nabla u|^2_2 +\frac{\mu}{10} |\nabla u_t|^2_2,\\
L_{16}=&\frac{1}{\sigma}\int_{\Omega} |\text{rot}H|^2\theta_t\text{d}x\\
=&\frac{1}{\sigma}\frac{d}{dt}\int_{\Omega} |\text{rot}H|^2 \theta\text{d}x-\frac{1}{\sigma}\int_{\Omega} |\text{rot}H|^2_t \theta\text{d}x\\
\leq &\frac{1}{\sigma}\frac{d}{dt}\int_{\Omega} |\text{rot}H|^2 \theta\text{d}x+C(\sigma)|\nabla H|^2_2 +\frac{1}{10\sigma} |\nabla H_t|^2_2,
\end{split}
\end{equation}
which,  tegether with (\ref{liu2ssccbb}), implies that 
\begin{equation}
\label{liu2vbvbcc}
\begin{split}\frac{\kappa}{2}\frac{d}{dt}&\int_{\Omega} |\nabla\theta|^2 \text{d}x+\int_{\Omega} \rho |\theta_t|^2 \text{d}x
\leq \frac{d}{dt}\int_{\Omega} \Big(\frac{1}{\sigma}|\text{rot}H|^2+Q(u)\Big) \theta\text{d}x\\
&\qquad \qquad \quad+C\|\nabla u\|^2_1|\nabla \theta_t|^2_{2}
+\frac{\mu}{10} |\nabla u_t|^2_2+\frac{1}{10\sigma} |\nabla H_t|^2_2+C(\sigma).
\end{split}
\end{equation}

Thirdly, differentiating  $ (\ref{eq:1.2pp})_1$ with respect to $t$,  multiplying  by $H_t$ and integrating over $\Omega$, we have
\begin{equation}\label{zhumw2cc}
\begin{split}
&\frac{1}{2} \frac{d}{dt}|H_t|^2_2+\frac{1}{\sigma}\int_{\Omega} |\nabla H_t|^2\text{d}x\\
=&  \int_{\Omega} \big(H_t\cdot \nabla u+H\cdot \nabla u_t-u_t\cdot \nabla H)\cdot H_t\text{d}x\\
&- \int_{\Omega}(u \cdot \nabla H_t+H_t \text{div}u+H \text{div}u_t\big) \cdot H_t \text{d}x\\
\leq & C|D(u)|_\infty|H_t|^2_2+C|H|_\infty|\nabla u_t|_2 |H_t|_2+C|u_t|_6|\nabla H|_2|H_t|_3\\
\leq &C(\sigma)(|D(u)|_\infty+1)|H_t|^2_2+\frac{\mu}{10}|\nabla u_t|^2_2+\frac{1}{10\sigma}|\nabla H_t|^2_2.
\end{split}
\end{equation}
Then combining (\ref{zhen5gvsd}), (\ref{liu2vbvbcc}) and (\ref{zhumw2cc}),  we have

\begin{equation}\label{gv88}
\begin{split}
&\frac{d}{dt}\int_{\Omega}\Big(\rho |u_t|^2+|H_t|^2+|\nabla \theta|^2\Big) \text{d}x+\int_{\Omega}\Big(|\nabla u_t|^2+\rho|u_t|^2+\frac{1}{\sigma}|\nabla H_t|^2\Big) \text{d}x\\
\leq &C(\sigma)(|\sqrt{\rho}u_t|^2_2+|H_t|^2_2+|\nabla \theta|^2_2)(|D(u)|_\infty+\|\nabla u\|^2_1+1)\\
&+\frac{d}{dt}\int_{\Omega} \Big(\frac{1}{\sigma}|\text{rot}H|^2+Q(u)\Big) \theta\text{d}x+C(\sigma)(1+\|\nabla u\|^2_1).
\end{split}
\end{equation}

From the momentum equations  $ (\ref{eq:1.2pp})_4$,  for any $\tau \in (0,T)$,  we easily have
\begin{equation}\label{li9}
\begin{split}
|\sqrt{\rho}u_t(\tau)|^2_2\leq C\int_{\Omega} \rho |u|^2|\nabla u|^2(\tau)\text{d}x+C\int_{\Omega} \frac{|\nabla P+Lu- \text{rot}H\times H|^2}{\rho}(\tau)\text{d}x,
\end{split}
\end{equation}
due to the initial layer compatibility condition (\ref{th79}), letting $\tau\rightarrow 0$ in (\ref{li9}), we have
\begin{equation}\label{nvk33}
\begin{split}
\lim \sup_{\tau\rightarrow 0}|\sqrt{\rho}u_t(\tau)|^2_2 \leq C\int_{\Omega} \rho_0 |u_0|^2|\nabla u_0|^2\text{d}x+C\int_{\Omega} |g_1|^2\text{d}x\leq C.
\end{split}
\end{equation}

Then integrating (\ref{gv88}) over $(0,T)$ with respect to $t$, via (\ref{nvk33}) and  Gronwall's inequality, we deduce that
\begin{equation*}
\begin{split}
\big(|\sqrt{\rho} u_t|^2_2+|H_t|^2_2+|\nabla \theta|^2_2\big)(t) +\int_0^T\Big(|\nabla u_t|^2_2+|\sqrt{\rho}\theta_t|^2_2+\frac{1}{\sigma}|\nabla H_t|^2_2\Big) \text{d}t\leq C(\sigma),\quad 0<t \leq T,
\end{split}
\end{equation*}
which, together with (\ref{mousun}), gives the desired conclusions.

\end{proof}

Via some   Poincar$\acute{\text{e}}$ type inequality (see (\ref{star} ) or Lemma \ref{pang}) coming from \cite{lions}, we have the following estimate for $ |\theta|_{D^2}$:
 \begin{lemma}\label{lem:4-1zx}
\begin{equation*}
\begin{split}
|\sqrt{\rho} \theta_t(t)|^2_{2} +|\theta(t)|^2_{D^2}+ \int_{0}^{T} |\theta_t|^2_{D^1}\text{d}s\leq C(\sigma), \quad 0\leq t \leq T,
\end{split}
\end{equation*}
where the finite constant  $C(\sigma)>0$ only depends on $C_0$, $\sigma$ and $T$ $(any\  T\in (0,\overline{T}])$. \end{lemma}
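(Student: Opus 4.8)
The plan is to raise the regularity of $\theta$ by one level: differentiate the temperature equation in time, test against $\theta_t$, and close a Gronwall inequality whose multiplier has finite time integral thanks to assumption (\ref{we11}) and Lemmas \ref{s1}--\ref{lem:4-1}; the $D^2$-bound for $\theta$ is then read off from the elliptic estimate (\ref{mousun}). \emph{Step 1 (energy identity for $\theta_t$).} Using the continuity equation and (\ref{zhoumou}), rewrite $(\ref{eq:1.2pp})_5$ as
\begin{equation*}
\rho\theta_t+\rho u\cdot\nabla\theta-\kappa\triangle\theta+P\text{div}u=Q(u)+\frac{1}{\sigma}|\text{rot}H|^2 .
\end{equation*}
Differentiate this identity in $t$, multiply by $\theta_t$, and integrate over $\Omega$. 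Using the Neumann condition $\partial\theta_t/\partial n|_{\partial\Omega}=0$ for the diffusion term and $\rho_t=-\text{div}(\rho u)$ to dispose of the $\rho_t$-contribution coming from the time derivative, one is led to
\begin{equation*}
\frac{1}{2}\frac{d}{dt}|\sqrt{\rho}\theta_t|_2^2+\kappa|\nabla\theta_t|_2^2=-\int_\Omega\big(\rho_t\,u\cdot\nabla\theta+\rho\,u_t\cdot\nabla\theta\big)\theta_t\,\text{d}x-\int_\Omega (P\text{div}u)_t\,\theta_t\,\text{d}x+\int_\Omega Q(u)_t\,\theta_t\,\text{d}x+\frac{1}{\sigma}\int_\Omega\big(|\text{rot}H|^2\big)_t\,\theta_t\,\text{d}x .
\end{equation*}

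\emph{Step 2 (right-hand side and Gronwall).} Because $\theta_t$ satisfies only a Neumann condition and $\rho$ has no positive lower bound in the presence of vacuum, $|\theta_t|_2$ is not controlled by $|\nabla\theta_t|_2$ alone; here the weighted Poincar$\acute{\text{e}}$-type inequality of Lemma \ref{pang} (see (\ref{star}), from \cite{lions}), together with Lemma \ref{s1}, supplies $|\theta_t|_2+|\theta_t|_6\leq C(|\sqrt{\rho}\theta_t|_2+|\nabla\theta_t|_2)$. With this, and using $\rho_t=-\text{div}(\rho u)$, $P_t=R(\rho_t\theta+\rho\theta_t)$, $|Q(u)_t|\leq C|\nabla u|\,|\nabla u_t|$, $\big|(|\text{rot}H|^2)_t\big|\leq C|\nabla H|\,|\nabla H_t|$, together with Holder's, the Gagliardo-Nirenberg and Young's inequalities, the whole right-hand side is bounded by
\begin{equation*}
\frac{\kappa}{2}|\nabla\theta_t|_2^2+C(\sigma)\big(|D(u)|_\infty+\|\nabla u\|_1^2+1\big)|\sqrt{\rho}\theta_t|_2^2+\Phi(t),
\end{equation*}
where $\Phi(t)$ gathers terms of the type $|\nabla u_t|_2^2|\nabla u|_3^2$, $\tfrac1\sigma|\nabla H_t|_2^2|\nabla H|_3^2$, $|\nabla u_t|_2^2$, $\tfrac1\sigma|\nabla H_t|_2^2$, $\|\nabla u\|_1^2$, $|\nabla\theta|_2^2$ and constants. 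By Lemmas \ref{s1}--\ref{lem:4-1}, the quantities $|\nabla u|_3^2\leq C|\nabla u|_2\|\nabla u\|_1$, $|\nabla H|_3^2$, $|\nabla u|_6\leq C|u|_{D^2}$ and $|\nabla H|_6\leq C|H|_{D^2}$ are $\leq C(\sigma)$ uniformly in $t$, while $\int_0^T\big(|\nabla u_t|_2^2+\tfrac1\sigma|\nabla H_t|_2^2+\|\nabla u\|_1^2+|\nabla\theta|_2^2\big)\,\text{d}t\leq C(\sigma)$; hence $\int_0^T\Phi\,\text{d}t\leq C(\sigma)$. Absorbing the $\frac{\kappa}{2}|\nabla\theta_t|_2^2$ on the left gives
\begin{equation*}
\frac{d}{dt}|\sqrt{\rho}\theta_t|_2^2+\kappa|\nabla\theta_t|_2^2\leq C(\sigma)\big(|D(u)|_\infty+\|\nabla u\|_1^2+1\big)|\sqrt{\rho}\theta_t|_2^2+\Phi(t),
\end{equation*}
whose coefficient $|D(u)|_\infty+\|\nabla u\|_1^2+1$ is $t$-integrable by assumption (\ref{we11}) and Lemmas \ref{s2}, \ref{s4}.

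\emph{Step 3 (initial data, conclusion, and main obstacle).} To start the Gronwall argument, note that from the temperature equation, for $\tau\in(0,T)$,
\begin{equation*}
|\sqrt{\rho}\theta_t(\tau)|_2^2\leq C\int_\Omega\rho|u|^2|\nabla\theta|^2(\tau)\,\text{d}x+C\int_\Omega\rho^{-1}\big|\kappa\triangle\theta-P\text{div}u+Q(u)+\tfrac1\sigma|\text{rot}H|^2\big|^2(\tau)\,\text{d}x,
\end{equation*}
so letting $\tau\to0$ and using the compatibility condition $(\ref{th79})_2$ exactly as in (\ref{li9})--(\ref{nvk33}) yields $\limsup_{\tau\to0}|\sqrt{\rho}\theta_t(\tau)|_2\leq C$. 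Gronwall's inequality then gives $|\sqrt{\rho}\theta_t(t)|_2^2+\int_0^T|\nabla\theta_t|_2^2\,\text{d}t\leq C(\sigma)$ for $0\leq t\leq T$, and inserting this together with $|\nabla u|_6,|\nabla H|_6\leq C(\sigma)$ (Lemma \ref{lem:4-1}) into the elliptic estimate (\ref{mousun}) for $|\theta|_{D^2}$ produces $|\theta(t)|_{D^2}\leq C(\sigma)$, which is the remaining assertion. The one genuinely delicate point is Step 2: estimating $|\theta_t|_2$ and $|\theta_t|_6$ without a positive lower bound for $\rho$ and with $\theta_t$ subject only to a Neumann condition is impossible without the Lions-type weighted Poincar$\acute{\text{e}}$ inequality of the appendix; once that is in hand, the remaining estimates run in complete parallel with those already performed for $u_t$ and $H_t$ in Lemma \ref{lem:4-1} and in (\ref{liu2vbvbcc}).
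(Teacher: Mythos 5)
Your proposal follows essentially the same route as the paper's own proof: differentiate $(\ref{eq:1.2pp})_5$ in time, test with $\theta_t$, control $|\theta_t|_6$ via the weighted Poincar$\acute{\text{e}}$ inequality of Lemma \ref{pang} (i.e. (\ref{star})), close with Gronwall using the compatibility condition (\ref{th79}) to bound $|\sqrt{\rho}\theta_t|_2$ as $t\to 0$, and read off $|\theta|_{D^2}$ from the elliptic estimate (\ref{mousun})/(\ref{yuyu}); this is exactly the scheme (\ref{gh78v})--(\ref{nvk33cc}). One small slip: the $\rho_t$-contribution from $\partial_t(\rho\theta_t)$ is not fully ``disposed of'' by the continuity equation --- after combining with the convection term $\rho u\cdot\nabla\theta_t$ a residual term $-\int_\Omega\rho_t|\theta_t|^2\,\text{d}x=-2\int_\Omega\rho u\cdot\nabla\theta_t\,\theta_t\,\text{d}x$ survives (the paper's $L_{22}$), but it is bounded by $C|\rho|_\infty^{1/2}|u|_\infty|\sqrt{\rho}\theta_t|_2|\nabla\theta_t|_2$ and absorbed exactly like the terms you already treat, so the argument and conclusion are unaffected.
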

\begin{proof}
Firstly, from (\ref{mousun}) and Lemma \ref{lem:4-1}, we quickly have
\begin{equation}\label{yuyu}
|\theta|_{D^2}\leq C(\sigma)(1+|\sqrt{\rho}\theta_t|_2).
\end{equation}

Next differentiating $ (\ref{eq:1.2pp})_5$ with respect to $t$, we have
\begin{equation}\label{gh78v}
\begin{split}
\rho \theta_{tt}-\kappa \theta_t=&-\rho_t\theta_t -\rho_t u\cdot\nabla \theta-\rho u_t\cdot\nabla \theta-\rho u\cdot\nabla \theta_t\\
&+ P_t\text{div}u+P\text{div}u_t
 +Q(u)_t+\frac{1}{\sigma}|\text{rot}H|^2_t.
\end{split}
\end{equation}
Multiplying (\ref{gh78v}) by $\theta_t$ and integrating  over $\Omega$, we have
\begin{equation}\label{zhen4v}
\begin{split}
&\frac{1}{2}\frac{d}{dt}\int_{\Omega}\rho |\theta_t|^2 \text{d}x+\kappa\int_{\Omega}|\nabla \theta_t|^2\text{d}x\\
=&  R\int_{\Omega} \rho |\theta_t|^2 \text{div} u\text{d}x+R\int_{\Omega} \rho_t \theta \text{div} u \theta_t\text{d}x+R\int_{\Omega} \rho\theta \text{div} u_t \theta_t\text{d}x\\
&+\int_{\Omega} Q(u)_t \theta_t\text{d}x-\int_{\Omega} \rho_t  u \cdot \nabla \theta \theta_t\text{d}x-\int_{\Omega} \rho_t |\theta_t|^2 \text{d}x\\
&+\int_{\Omega} \rho u_t \cdot \nabla \theta \theta_t\text{d}x+\frac{1}{\sigma}\int_{\Omega} |\text{rot}H|^2_t \theta_t\text{d}x\equiv:\sum_{i=17}^{24}L_i.
\end{split}
\end{equation}
According to Lemmas \ref{we11}-\ref{lem:4-1}, Holder's inequality, Gagliardo-Nirenberg inequality and Young's inequality,  we deduce that
\begin{equation}\label{zhou6cc}
\begin{split}
L_{17}=&R\int_{\Omega} \rho |\theta_t|^2 \text{div} u\text{d}x\leq  C|D(u)|_{\infty}|\sqrt{\rho} \theta_t|^2_{2},\\
L_{18}=& R\int_{\Omega} \rho_t \theta \text{div} u \theta_t \text{d}x=-R\int_{\Omega} \rho\theta |\text{div} u|^2 \theta_t \text{d}x-R\int_{\Omega} \theta\nabla \rho \cdot u  \text{div} u \theta_t \text{d}x\\
 \leq& C|\rho|_{\infty}|\theta|_{\infty}|\nabla u|_{3}|\nabla u|_{2} |\theta_t|_6+C|u|_{\infty}|\theta|_{\infty}|\nabla \rho|_2|\nabla u|_{3}|\theta_t|_6\\
\leq & C(\sigma)  \big(|\sqrt{\rho}\theta_t|_2+|\nabla \theta_t|_2\big)
\leq \frac{\kappa}{8}|\nabla \theta_t|^2_2+C(\sigma)  |\sqrt{\rho} \theta_t|^2_{2}+C(\sigma) ,\\
L_{19}=&R\int_{\Omega} \rho\theta \text{div} u_t \theta_t\text{d}x\leq C|\theta|_\infty|\rho|_3|\nabla u_t|_2|\theta_t|_6\\
\leq &C|\nabla u_t|_2(|\nabla \theta_t|_2+|\sqrt{\rho}\theta_t|_2) \leq  \frac{\kappa}{8}|\nabla \theta_t|^2 +C(\sigma)  |\nabla u_t|^2+C(\sigma)  |\sqrt{\rho} \theta_t|^2_{2},\\
L_{20}=&\int_{\Omega}Q(u)_t \theta_t\text{d}x\leq C|\nabla u|_3|\nabla u_t|_2| \theta_t|_6\\
\leq &C(\sigma) |\nabla u_t|_2(|\nabla \theta_t|_2+|\sqrt{\rho}\theta_t|_2)\leq \frac{\kappa}{8}|\nabla \theta_t|^2 +C(\sigma)  |\nabla u_t|^2+C(\sigma)  |\sqrt{\rho} \theta_t|^2_{2},\\
L_{21}=& -\int_{\Omega} \rho_t  u\cdot \nabla \theta \theta_t \text{d}x
 \leq C|u|_{\infty}|\nabla \theta|_{3}|\rho _t|_{2}|\theta_t|_{6}\\
\leq & C(\sigma)|\nabla \theta|^{\frac{1}{2}}_{2}|\nabla \theta|^{\frac{1}{2}}_{6}(|\nabla \theta_t|_2+|\sqrt{\rho}\theta_t|_2)\\
\leq& \frac{\kappa}{8} |\nabla \theta_t|_{2}+C(\sigma) |\sqrt{\rho}\theta_t|^2_2+C(\sigma)\|\nabla \theta\|^2_1
\leq   \frac{\kappa}{8} |\nabla \theta_t|_{2}+C(\sigma) |\sqrt{\rho}\theta_t|^2_2+C(\sigma),\\
L_{22}=& -\int_{\Omega} \rho_t |\theta_t|^2 \text{d}x=-2\int_{\Omega} \rho u \cdot \nabla \theta_t \theta_t  \text{d}x\\
 \leq& C|\rho|^{\frac{1}{2}}_{\infty}|u|_{\infty}|\sqrt{\rho} \theta_t|_{2}|\nabla \theta_t|_{2}
\leq \frac{\kappa}{8} |\nabla \theta_t|_{2}+C(\sigma)  |\sqrt{\rho}\theta_t|^2_2,\\
\end{split}
\end{equation}
\begin{equation}
\label{zhou6ccmm}\begin{split}
L_{23}=& -\int_{\Omega} \rho u_t\cdot \nabla \theta \theta_t \text{d}x
 \leq C|\rho|^{\frac{1}{2}}_{\infty}|\nabla \theta|_{2}| \theta_t|_{6}|\sqrt{\rho} u_t|_{3}\\
\leq&C(\sigma) |\sqrt{\rho} u_t|^{\frac{1}{2}}_{2}|\sqrt{\rho} u_t|^{\frac{1}{2}}_{6} (|\nabla \theta_t|_2+|\sqrt{\rho}\theta_t|_2)\\
\leq& \frac{\kappa}{8}|\nabla \theta_t|^2 +C(\sigma)  |\nabla u_t|^2+ C(\sigma)|\sqrt{\rho} \theta_t|^2_{2},\\
L_{24}=&\frac{1}{\sigma}\int_{\Omega} |\text{rot}H|^2_t \theta_t\text{d}x\leq C|\nabla H|_3|\nabla H_t|_2| \theta_t|_6\\
\leq &C(\sigma) |\nabla H_t|_2(|\nabla \theta_t|_2+|\sqrt{\rho}\theta_t|_2)\leq \frac{\kappa}{8}|\nabla \theta_t|^2 +C(\sigma)  |\nabla H_t|^2+C(\sigma)  |\sqrt{\rho} \theta_t|_{2},
\end{split}
\end{equation}
where we have used the fact  (\ref{yuyu}) and the  following Poincar$\acute{\text{e}}$ type inequality (see \cite{lions}):
\begin{equation}
\label{star}\begin{split}
|\theta_t|_6\leq C(|\sqrt{\rho}\theta_t|_2+(1+|\rho|_2))|\nabla \theta_t|_2,
\end{split}
\end{equation}
and its proof can be seen in Lemma \ref{pang}.
Then according to (\ref{zhen4v})-(\ref{zhou6ccmm}), we deduce that 
\begin{equation}
\label{liuvbnm}
\begin{split}
&\frac{1}{2}\frac{d}{dt} \int_{\Omega}\rho |\theta_t|^2 \text{d}x+\kappa\int_{\Omega}|\nabla \theta_t|^2\text{d}x\\
\leq& C(\sigma)(|D(u)|_\infty+1)|\sqrt{\rho} \theta_t|^2_{2}+C(\sigma)(| u_t|^2_{D^1}+| H_t|^2_{D^1}+1).
\end{split}
\end{equation}
From the energy equations  $ (\ref{eq:1.2pp})_5$,  for any $\tau \in (0,T)$,  we easily have
\begin{equation}\label{li9cc}
\begin{split}
|\sqrt{\rho}\theta_t(\tau)|^2_2\leq C\int_{\Omega} \rho |u|^2|\nabla \theta|^2(\tau)\text{d}x+C\int_{\Omega} \frac{|\kappa\triangle \theta+Q(u)+\frac{1}{\sigma}|\text{rot}H|^2|^2}{\rho}(\tau)\text{d}x,
\end{split}
\end{equation}
due to the initial layer compatibility condition (\ref{th79}), letting $\tau\rightarrow 0$ in (\ref{li9}), we have
\begin{equation}\label{nvk33cc}
\begin{split}
\lim \sup_{\tau\rightarrow 0}|\sqrt{\rho}\theta_t(\tau)|^2_2 \leq C\int_{\Omega} \rho_0 |u_0|^2|\nabla \theta_0|^2\text{d}x+C\int_{\Omega} |g_2|^2\text{d}x\leq C.
\end{split}
\end{equation}
Then according to Gronwall's inequality,  (\ref{yuyu}) and (\ref{nvk33cc}), we immediately obtain the desired conclusions. 
\end{proof}

Finally, the following lemma gives bounds of $|\rho|_{D^{1,q}}$,    $| H|_{D^{2,q}}$,  $| u|_{D^{2,q}}$ and   $ |\theta|_{D^{2,q}}$.
 \begin{lemma}\label{s7}
 \begin{equation}\label{zhu54}
\begin{split}
\|(\rho(t)\|_{W^{1,q}}+|\rho_t(t)|_q+\int_0^T(|H|^2_{D^{2,q}}+|u|^2_{D^{2,q}}+|\theta|^2_{D^{2,q}})\text{d}t\leq C(\sigma),
\quad 0\leq t<  T,
\end{split}
\end{equation}
where the finite constant  $C(\sigma)>0$ only depends on $C_0$, $\sigma$ and $T$ $(any\  T\in (0,\overline{T}])$.\end{lemma}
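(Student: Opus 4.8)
The plan is to bootstrap from the $H^2$-type bounds of Lemmas 2.4--2.5 to the $W^{1,q}$ and $D^{2,q}$ level by differentiating the continuity equation and applying the $L^q$-theory of the elliptic operators appearing in the momentum, magnetic, and energy equations. First I would handle $\|\rho\|_{W^{1,q}}$ and $|\rho_t|_q$: applying $\nabla$ to $(\ref{eq:1.2pp})_3$ gives a transport equation for $\nabla\rho$ of the form $(\nabla\rho)_t + u\cdot\nabla(\nabla\rho) + (\nabla\rho)\cdot\nabla u + \nabla\rho\,\dd u + \rho\nabla\dd u = 0$, so along characteristics
\begin{equation*}
\frac{d}{dt}|\nabla\rho|_q \leq C\big(1+|\nabla u|_{L^\infty}\big)|\nabla\rho|_q + C|\rho|_\infty|\nabla\dd u|_q,
\end{equation*}
and since $|\nabla u|_{L^\infty}\leq C(1+|D(u)|_\infty)(1+\log(\dots))$ is awkward, I would instead bound $|\nabla u|_{L^\infty}$ crudely by $|u|_{D^{2,q}}$ via Sobolev embedding ($W^{1,q}\hookrightarrow L^\infty$ for $q>3$) and absorb using Gronwall once $\int_0^T|u|_{D^{2,q}}\,dt$ is controlled; $|\rho_t|_q$ then follows from $\rho_t=-u\cdot\nabla\rho-\rho\,\dd u$. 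So the real content is the integrated $D^{2,q}$ bounds, which must be obtained first or simultaneously.

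Next I would write the elliptic estimates. For the velocity, $Lu = -\rho u_t - \rho u\cdot\nabla u - \nabla P + \text{rot}H\times H$, so by $L^q$-elliptic regularity
\begin{equation*}
|u|_{D^{2,q}} \leq C\big(|\rho u_t|_q + |\rho u\cdot\nabla u|_q + |\nabla P|_q + |\text{rot}H\times H|_q + |\nabla u|_q\big).
\end{equation*}
Each right-hand term is estimated by interpolation between the $L^2$- and $L^6$-type norms already controlled: $|\rho u_t|_q\leq C|\sqrt\rho u_t|_2^{(6-q)/\dots}|\nabla u_t|_2^{\dots}$ using $|\nabla u_t|_2\in L^2(0,T)$ from Lemma 2.5; $|\nabla P|_q\leq C(|\nabla\rho|_q|\theta|_\infty+|\rho|_\infty|\nabla\theta|_q)$ with $|\nabla\theta|_q$ controlled by $|\theta|_{D^{2}}$; the magnetic term by $|H|_\infty|\nabla H|_q$ with $|\nabla H|_q\leq C|H|_{D^2}^{\dots}$. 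Analogously, $\frac{1}{\sigma}H_{D^{2,q}}$ comes from $\triangle H = \sigma(H_t - \text{rot}(u\times H))$ and $\theta_{D^{2,q}}$ from $\kappa\triangle\theta = \rho\theta_t + \rho u\cdot\nabla\theta + P\dd u - Q(u) - \frac{1}{\sigma}|\text{rot}H|^2$, each term again handled by Hölder, Gagliardo--Nirenberg, and the bounds of Lemmas 2.1--2.5. After squaring and integrating in $t$, every right-hand contribution is seen to lie in $L^1(0,T)$ — crucially $\int_0^T|\nabla u_t|_2^2\,dt$, $\int_0^T|\nabla\theta_t|_2^2\,dt$, and $\int_0^T\frac{1}{\sigma}|\nabla H_t|_2^2\,dt$ are finite by Lemmas 2.5--2.6 — which closes the estimate on $\int_0^T(|u|_{D^{2,q}}^2+\frac{1}{\sigma}|H|_{D^{2,q}}^2+|\theta|_{D^{2,q}}^2)\,dt$, and then feeds into the Gronwall argument for $\|\rho\|_{W^{1,q}}$.

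The main obstacle I anticipate is the circularity between the $\rho$-transport estimate, which needs $\int_0^T|u|_{D^{2,q}}\,dt<\infty$ (hence $\int_0^T|\nabla u|_{L^\infty}\,dt<\infty$ via $q>3$), and the elliptic estimate for $|u|_{D^{2,q}}$, which through $|\nabla P|_q$ needs $|\nabla\rho|_q$. This is resolved by setting up a single Gronwall inequality for the combined quantity $\|\rho\|_{W^{1,q}}^q + $ (the time-integrated $D^{2,q}$ norms), noting that all the genuinely dangerous terms carry an already-integrable weight ($|\sqrt\rho u_t|_2$, $|\nabla u_t|_2$, etc., from the previous lemmas) so that $|\nabla\rho|_q$ appears only linearly with an $L^1(0,T)$ coefficient. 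A secondary technical point is keeping track of the interpolation exponents in $|f|_q \lesssim |f|_2^{a}|\nabla f|_2^{1-a}$ or $|f|_q\lesssim |f|_6^{a}|\nabla f|_6^{1-a}$ for $q\in(3,6]$, but this is routine. Once the a priori bound (\ref{zhu54}) is in place, it contradicts the maximality of $\overline T$ in the usual way (the solution can be continued past $\overline T$ by Theorem 1.1), completing the proof of Theorem 1.2 — though that final continuation step presumably comes after this lemma in the paper.
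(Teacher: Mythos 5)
Your elliptic estimates for $u$, $H$ and $\theta$ in $D^{2,q}$, and the use of the time-integrability of $|\nabla u_t|_2$, $|\nabla\theta_t|_2$ and $\frac{1}{\sigma}|\nabla H_t|_2$ from the previous lemmas, coincide with the paper's argument (cf.\ (\ref{zhu55}) and (\ref{jia5})). The genuine gap is in your treatment of the transport estimate for $\nabla\rho$. You replace the coefficient $|\nabla u|_{L^\infty}$ by the "crude" bound $|\nabla u|_{L^\infty}\leq C\|\nabla u\|_{W^{1,q}}\leq C(1+|u|_{D^{2,q}})$, but the elliptic estimate gives $|u|_{D^{2,q}}\leq C(\sigma)(1+|\nabla u_t|_2+|\nabla\rho|_q)$, so the differential inequality you actually obtain is of the form
\begin{equation*}
\frac{d}{dt}|\nabla\rho|_q\leq C\big(1+|\nabla u_t|_2+|\nabla\rho|_q\big)|\nabla\rho|_q+C\big(1+|\nabla u_t|_2+|\nabla\rho|_q\big),
\end{equation*}
which contains the quadratic term $C|\nabla\rho|_q^2$ with a constant (non-integrable-in-a-useful-sense) coefficient. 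This is a Riccati-type inequality; it does not yield a bound uniform up to $\overline T$ by Gronwall, and your claim that in the combined inequality "$|\nabla\rho|_q$ appears only linearly with an $L^1(0,T)$ coefficient" is exactly what fails: the dangerous coefficient is $|u|_{D^{2,q}}$ (equivalently $|\nabla u|_{L^\infty}$), whose $L^1_t$-bound is the conclusion, not a hypothesis. Interpolating $|\nabla u|_{L^\infty}\lesssim \|\nabla u\|_{H^1}^{1-\alpha}|u|_{D^{2,q}}^{\alpha}$ does not help either, since it still leaves a superlinear term $|\nabla\rho|_q^{1+\alpha}$.

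The paper closes this loop by a structural observation that your proposal discards: in the $L^q$-energy identity (\ref{zhu20cccc}) for $\nabla\rho$, the convective contribution is the quadratic form $|\nabla\rho|^{q-2}(\nabla\rho)^\top\nabla u\,\nabla\rho$, which sees only the symmetric part of $\nabla u$, i.e.\ equals $|\nabla\rho|^{q-2}(\nabla\rho)^\top D(u)\,\nabla\rho$, and the remaining terms involve only $\mathrm{div}\,u$. Hence (\ref{zhu200}) has the coefficient $|D(u)|_\infty$, which lies in $L^1(0,T)$ precisely by the contradiction hypothesis (\ref{we11}); substituting the elliptic bound for $|\nabla^2u|_q$ then gives (\ref{zhu28ss}), which is linear in $|\nabla\rho|_q$ with an integrable coefficient and an integrable forcing $C(\sigma)(1+|\nabla u_t|_2^2)$, so Gronwall applies. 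This symmetrization is not a technical convenience but the very mechanism that lets $D(u)$ (rather than $\nabla u$) control the blow-up, which is the point of the theorem; without it, your argument does not close. (The logarithmic BKM-type route you mention as "awkward" is likewise not needed here.) The remainder of your plan — $|\rho_t|_q$ from the continuity equation and the integrated $D^{2,q}$ bounds for $H$ and $\theta$ — is sound and matches the paper once the $|\nabla\rho|_q$ bound is in hand.
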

\begin{proof}

 Firstly, from  $ (\ref{eq:1.2pp})_3$ and $ (\ref{eq:1.2pp})_4$, the standard regularity estimate for elliptic equations and Lemmas \ref{we11}-\ref{lem:4-1zx}, we have
 \begin{equation}\label{zhu55}
\begin{split}
|\nabla^2 u|_q \leq &C|\rho u_t+\rho u\cdot \nabla u
  +\nabla P-\text{rot}H\times H|_q+C|\nabla u|_q\\
\leq&  C(\sigma)(1+|\nabla u_t|_2+|\nabla \rho|_q).
\end{split}
\end{equation}

Next,  applying $\nabla$ to  $(\ref{eq:1.2pp})_3$, multiplying the resulting equations by $q|\nabla \rho|^{q-2} \nabla \rho$, we have
\begin{equation}\label{zhu20cccc}
\begin{split}
&(|\nabla \rho|^q)_t+\text{div}(|\nabla \rho|^qu)+(q-1)|\nabla \rho|^q\text{div}u\\
=&-q |\nabla \rho|^{q-2}(\nabla \rho)^\top D( u) (\nabla \rho)-q \rho|\nabla \rho|^{q-2} \nabla \rho \cdot \nabla \text{div}u.
\end{split}
\end{equation}
Then integrating (\ref{zhu20cccc}) over $\Omega$, we immediately obtain
\begin{equation}\label{zhu200}
\begin{split}
\frac{d}{dt}|\nabla \rho|_q
\leq& C|D( u)|_\infty|\nabla \rho|_q+C|\nabla^2 u|_q,
\end{split}
\end{equation}
which means that
\begin{equation}\label{zhu28ss}
\begin{split}
\frac{d}{dt}|\nabla \rho|_q
\leq& C(\sigma)(1+|D( u)|_\infty)|\nabla \rho|_q+C(\sigma)(1+|\nabla u_t|^2_2).
\end{split}
\end{equation}

Then from Gronwall's inequality, we immediately have
$$
|\nabla \rho|_q+|\leq C(\sigma)\exp \Big(\int_0^t(1+|D( u)|_\infty) \text{d}s\Big)\leq C(\sigma), \quad 0\leq t \leq T.
$$
So via (\ref{zhu55}) and Lemmas \ref{s1}-\ref{lem:4-1zx}, we easily have
\begin{equation}\label{zhu25ss}
\begin{split}
\int_0^t|u(s)|^2_{D^{2,q}}\text{d}s
\leq& C(\sigma)\int_0^t(1+ |\nabla u_t(s)|^2_2)\text{d}s\leq C(\sigma),\quad 0\leq t\leq T.
\end{split}
\end{equation}

Thirdly, we consider the term $|H|_{D^{2,q}}$ and $|\theta|_{D^{2,q}}$, from 
$(\ref{eq:1.2pp})_1$,
$(\ref{eq:1.2pp})_5$, the standard regularity estimate for elliptic equations and Lemmas \ref{we11}-\ref{lem:4-1zx}, we quickly have
\begin{equation}\label{jia5}
\begin{split}
|H|_{D^{2,q}}\leq & C(\sigma)(|H_t-\text{rot}(u\times H)|_q+|\nabla H|_q)
\leq C(\sigma)(|\nabla H_t|_2+1),\\
|\theta|_{D^{2,q}} \leq & C(\big|\rho \theta_t+\rho  u\cdot \nabla \theta+P\text{div}u|_q
+||\nabla u|^2|_q+|\nabla |_q)+C\frac{1}{\sigma}|\text{rot}H|^2|_q\\
\leq & C(\sigma)(1+|\theta_t|_{D^1}+|u|_{D^{2,q}}+|H|_{D^{2,q}}),
\end{split}
\end{equation}
which, together with Lemma \ref{lem:4-1} and  (\ref{zhu25ss}),  implies the desired conclusions.
 \end{proof}

And this will be enough to extend the strong solution  $(H,\rho,u,\theta)$ beyond $t\geq \overline{T}$.

In truth, in view of the estimates obtained in  Lemmas \ref{s1}-\ref{s7}, we quickly know that the functions $(H,\rho,u,\theta)|_{t=\overline{T}} =\lim_{t\rightarrow \overline{T}}(H,\rho,u,\theta)$ satisfy the conditions imposed on the initial data (\ref{th78})-(\ref{th79}) with $H_0 \in H^1_0\cap H^2$. Therefore, we can take $(H,\rho,u,\theta)|_{t=\overline{T}}$ as the initial data and apply the local existence Theorem \ref{th5} to extend our local strong solution beyond $t\geq \overline{T}$. This contradicts the assumption on $\overline{T}$.

\section{Blow-up criterion (\ref{eq:2.912}) for  $\sigma=+\infty$}\ \\

Based on the estimates obtained in Section $2$, now we  prove (\ref{eq:2.912}) for $\sigma=+\infty$. Let $(H, \rho, u,\theta)$ be the unique strong solution to   the IBVP (\ref{eq:1.2pp})--(\ref{fan1}) with (\ref{fan3}). We assume that the opposite holds, i.e.,
\begin{equation}\label{we11c}
\begin{split}
\lim \sup_{T\mapsto \overline{T}} \big(|D( u)|_{L^1([0,T]; L^\infty(\Omega))}+|\theta|_{L^\infty([0,T];L^\infty(\Omega))}\big)=C_0<\infty.
\end{split}
\end{equation}

Next we need to show some  estimates for our strong solution $(H, \rho, u,\theta)$.
By assumption (\ref{we11c}), the proof of Lemma \ref{s1} and Remark \ref{lian1}, we easily show that   the magnetic field $H$ and the mass density $\rho$ are both uniformly bounded.
 \begin{lemma}\label{s1c}
\begin{equation*}
\begin{split}
|\rho(t)|_{\infty}+|H(t)|_{\infty}\leq C, \quad 0\leq t< T,
\end{split}
\end{equation*}
where the finite constant  $C>0$ only depends on $C_0$ and $T$ $(any\  T\in (0,\overline{T}])$.
 \end{lemma}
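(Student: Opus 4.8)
The plan is to revisit the proof of Lemma \ref{s1} and observe that neither the $|H|_\infty$ nor the $|\rho|_\infty$ estimate ever used the magnetic diffusion in an essential way: when $\sigma=+\infty$ the right-hand side $-\text{rot}\big(\frac{1}{\sigma}\text{rot}H\big)$ of $(\ref{eq:1.2pp})_1$ simply vanishes, so the only effect of passing from $0<\sigma<+\infty$ to $\sigma=+\infty$ is that the non-negative term $\frac{r(r-1)}{\sigma}\int_{\Omega}|H|^{r-2}|\nabla H|^2\,\text{d}x$ appearing on the left-hand side of (\ref{zhumw2})--(\ref{zhumw3}) disappears, which can only help. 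Equivalently, using $\text{div}H=0$, equation $(\ref{eq:1.2pp})_1$ with $\sigma=+\infty$ becomes the transport-type identity $H_t+u\cdot\nabla H=H\cdot\nabla u-H\,\text{div}u$, and the continuity equation is $\rho_t+u\cdot\nabla\rho+\rho\,\text{div}u=0$.

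First I would multiply the $H$-equation by $r|H|^{r-2}H$ for $r\geq2$, integrate over $\Omega$, and integrate by parts exactly as in (\ref{zhumw2})--(\ref{zhumw1}): the contribution of $u\cdot\nabla H$ becomes $\int_{\Omega}\text{div}u\,|H|^r\,\text{d}x$, and replacing $H\cdot\nabla u$ by $H\cdot D(u)$ (the antisymmetric part of $\nabla u$ drops after contraction with $H\otimes H$) yields
\[
\frac{d}{dt}|H|^r_r\leq(2r+1)|D(u)|_{\infty}|H|^r_r,
\]
hence, exactly as in (\ref{mou10}), $\frac{d}{dt}|H|_r\leq\frac{2r+1}{r}|D(u)|_{\infty}|H|_r\leq3|D(u)|_{\infty}|H|_r$. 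Gronwall's inequality together with the contradiction hypothesis (\ref{we11c}) then gives $\sup_{0\leq t<T}|H(t)|_r\leq C|H_0|_r$ with $C$ independent of $r$. Since $q>3$ gives $H_0\in W^{1,q}\hookrightarrow L^\infty(\Omega)$ and $\Omega$ is bounded, $|H_0|_r\leq\max(1,|\Omega|)\,|H_0|_{\infty}$ is bounded uniformly in $r$, so letting $r\to\infty$ produces the bound on $|H(t)|_\infty$. The bound on $|\rho(t)|_\infty$ is obtained identically from the continuity equation — or directly along characteristics, $|\rho(t)|_\infty\leq|\rho_0|_\infty\exp\big(\int_0^t|\text{div}u|_\infty\,\text{d}s\big)$ — using $|\text{div}u|_\infty\leq C|D(u)|_\infty$ and (\ref{we11c}), so that it depends only on $\|\text{div}u\|_{L^1([0,T];L^\infty(\Omega))}$, precisely as in Lemma \ref{s1} and Remark \ref{lian1}.

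I do not expect a real obstacle here: the whole point of the lemma is that the diffusion term is irrelevant to these two bounds. The only points requiring mild care are justifying the integration by parts within the regularity class (\ref{regsa}) — where $H\in C([0,T_*];W^{1,q})\hookrightarrow C([0,T_*];L^\infty(\Omega))$ makes every $|H|_r$ finite and continuous in $t$ and makes $|H|^{r-2}H$ an admissible test function — and checking that the factor $\frac{2r+1}{r}$ stays bounded (by $3$) while $|H_0|_r$ stays bounded uniformly in $r$, which is exactly what legitimizes the passage $r\to\infty$.
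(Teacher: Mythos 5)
Your proposal is correct and is essentially the paper's own argument: the paper proves Lemma \ref{s1c} simply by invoking the proof of Lemma \ref{s1} together with Remark \ref{lian1}, i.e.\ by observing that the diffusion term enters (\ref{zhumw2})--(\ref{zhumw3}) only as a nonnegative term with factor $\frac{1}{\sigma}$ on the left-hand side, so the bounds (\ref{mou10}) and the $r\to\infty$ passage, as well as the $|\rho|_\infty$ bound via $|\mathrm{div}\,u|_{L^1L^\infty}$, go through unchanged when $\sigma=+\infty$. Your added remarks on the uniformity in $r$ of $|H_0|_r$ and on the admissibility of the test function are fine but not points the paper belabors.
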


The next estimate directly  follows from the estimate in Lemma \ref{s2} and Remark \ref{lian2}:

  \begin{lemma}\label{s2cc}
\begin{equation*}
\begin{split}
|\sqrt{\rho}u(t)|^2_{ 2}+|\sqrt{\rho }\theta(t)|^2_2+\int_{0}^{T}\big(|\nabla u(t)|^2_{2}+|\nabla \theta(t)|^2_{2})\text{d}t\leq C,\quad 0\leq t< T,
\end{split}
\end{equation*}
where the finite constant  $C>0$ only depends on $C_0$ and $T$ $(any\  T\in (0,\overline{T}])$.
 \end{lemma}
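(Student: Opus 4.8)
The plan is to repeat, essentially \emph{verbatim}, the argument of Lemma~\ref{s2}, noting that passing to $\sigma=+\infty$ only deletes terms that either sat on the favorable (dissipative) side of the energy identity or carried a vanishing factor $1/\sigma$; hence every inequality in that proof survives, now with a constant independent of $\sigma$, which is exactly the content of Remark~\ref{lian2} and estimate (\ref{xue2}).

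First I would establish the kinetic--magnetic energy balance. Testing $(\ref{eq:1.2pp})_4$ against $u$, $(\ref{eq:1.2pp})_3$ against $\tfrac12|u|^2$, and $(\ref{eq:1.2pp})_1$ (now with $\tfrac1\sigma=0$) against $H$, then summing and integrating by parts exactly as in (\ref{2})--(\ref{zhu1}) — so that the Lorentz force and the magnetic stretching term cancel via the identities in (\ref{zhoumou}) — yields
\begin{equation*}
\frac{d}{dt}\int_\Omega\Big(\tfrac12\rho|u|^2+\tfrac12|H|^2\Big)\,\text{d}x+\int_\Omega\big(\mu|\nabla u|^2+(\lambda+\mu)(\text{div}u)^2\big)\,\text{d}x=\int_\Omega P\,\text{div}u\,\text{d}x .
\end{equation*}
Since $|\rho|_\infty\le C$ by Lemma~\ref{s1c} and $|\theta|_\infty\le C_0$ by (\ref{we11c}), one has $|P|_2=R|\rho\theta|_2\le C$ on the bounded domain $\Omega$, so the right-hand side is bounded by $\tfrac{\mu}{4}|\nabla u|_2^2+C$ as in (\ref{zhu2s}). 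Absorbing and integrating in time gives the bounds for $|\sqrt\rho u|_2$, $|H|_2$ and $\int_0^T|\nabla u|_2^2\,\text{d}t$ — the term $\tfrac1\sigma\int|\nabla H|^2$ appearing in (\ref{zhu3s}) is now simply absent.

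Next I would test $(\ref{eq:1.2pp})_5$ against $\theta$. As in (\ref{liu2s}), integration by parts produces
\begin{equation*}
\frac{d}{dt}\int_\Omega\tfrac12\rho|\theta|^2\,\text{d}x+\kappa\int_\Omega|\nabla\theta|^2\,\text{d}x\le C\int_\Omega\rho\theta^2|\text{div}u|\,\text{d}x+C\int_\Omega|\nabla u|^2|\theta|\,\text{d}x ,
\end{equation*}
the Ohmic heating contribution $\tfrac1\sigma\int_\Omega|\text{rot}H|^2|\theta|$ having disappeared identically. Using $|\theta|_\infty\le C_0$, the right-hand side is $\le C\big(1+|\nabla u|_2^2\big)$, whence Gronwall's inequality together with the previous step and Lemma~\ref{s1c} yields $|\sqrt\rho\theta|_2\le C$ and $\int_0^T|\nabla\theta|_2^2\,\text{d}t\le C$, completing the proof.

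I do not expect a genuine obstacle here; the only point requiring care is to confirm that none of the estimates in the proof of Lemma~\ref{s2} secretly used $\sigma<+\infty$ — and they do not, since the magnetic dissipation is monotone (it may always be dropped from the left-hand side) and the sole place where $1/\sigma$ entered the right-hand side, namely the Joule term in (\ref{liu2s}), vanishes when $\sigma=+\infty$. Consequently the resulting constant $C$ depends only on $C_0$ and $T$, independently of $\sigma$, as asserted.
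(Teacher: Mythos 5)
Your proposal is correct and follows essentially the same route as the paper: the paper proves this lemma simply by invoking the proof of Lemma \ref{s2} together with Remark \ref{lian2}, which observes that the only occurrence of $1/\sigma$ on the right-hand side (the Joule term in (\ref{liu2s})) disappears when $\sigma=+\infty$ while the magnetic dissipation sat harmlessly on the left, exactly as you argue.
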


The next lemma directly  follows from the proof for Lemma \ref{s4} and Remark \ref{lian3}:
  \begin{lemma}\label{s4cc}
\begin{equation*}
\begin{split}
|\nabla u(t)|^2_{ 2}+|\nabla \rho(t)|^2_{ 2}+|\nabla H(t)|^2_2+\int_0^T | u|^2_{D^2}\text{d}t\leq C,\quad 0\leq t<  T,
\end{split}
\end{equation*}
where the finite constant  $C>0$ only depends on $C_0$ and $T$ $(any\  T\in (0,\overline{T}])$.
 \end{lemma}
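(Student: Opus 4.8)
The plan is to repeat the proof of Lemma \ref{s4} almost verbatim, simply deleting every term carrying a factor $1/\sigma$; as Remark \ref{lian3} already indicates, this deletion is exactly what renders the resulting constant independent of $\sigma$ and therefore meaningful in the non-resistive case $\sigma=+\infty$. Concretely, I would work with
\[\Gamma(t)=\mu|\nabla u|^2_2+(\mu+\lambda)|\text{div}u|^2_2+|\nabla \rho(t)|^2_2+|\nabla H(t)|^2_2\]
and aim to produce a single Gronwall-type differential inequality for $\Gamma$ together with the coercive quantity $\int_0^T|u|^2_{D^2}\,\text{d}t$, using only the $\sigma$-independent bounds of Lemmas \ref{s1c} and \ref{s2cc}.

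First I would reproduce (\ref{zhu6})--(\ref{zhu6qss}): testing the momentum equation $(\ref{eq:1.2pp})_4$ against $\rho^{-1}\big(-Lu-\nabla P-\tfrac12\nabla|H|^2+H\cdot\nabla H\big)$ and invoking the $L^2$ elliptic estimate for the Lam\'e operator together with Lemmas \ref{s1c}--\ref{s2cc}, one controls $c\,|u|^2_{D^2}$ from below and bounds the terms $L_1,\dots,L_6$ exactly as in Section $2$ — except that in $L_6$ the substitution now reads $H_t=H\cdot\nabla u-u\cdot\nabla H-H\text{div}u$, so the $\tfrac1\sigma\triangle H$ contribution and the integration by parts (\ref{bao1}) disappear entirely. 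Next I would differentiate the continuity equation $(\ref{eq:1.2pp})_3$, test against $2\nabla\rho$ and integrate, recovering $\tfrac{d}{dt}|\nabla\rho|^2_2\le C(\epsilon)(|D(u)|_\infty+1)|\nabla\rho|^2_2+\epsilon|\nabla^2u|^2_2$ as in (\ref{zhu20})--(\ref{zhu21}). Finally I would differentiate the magnetic equation $(\ref{eq:1.2pp})_1$ and test against $2\nabla H$: with $\sigma=+\infty$ the term $\tfrac2\sigma D$ on the right of (\ref{zhu20q}) vanishes, and the surviving integrals $\int A:\nabla H$, $\int B:\nabla H$, $\int C:\nabla H$ are estimated just as in (\ref{zhu20qq}), yielding $\tfrac{d}{dt}|\nabla H|^2_2\le C(\epsilon)(|D(u)|_\infty+1)|\nabla H|^2_2+\epsilon|\nabla^2u|^2_2$.

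Adding the three inequalities and choosing $\epsilon$ small enough that the $\epsilon|\nabla^2u|^2_2$ terms are absorbed into the coercive term from the momentum estimate, I would arrive, in analogy with (\ref{zhugai1}), at
\[\frac{d}{dt}\Gamma+|u|^2_{D^2}\le C\frac{d}{dt}\int_\Omega\Big(\big(P+\tfrac12|H|^2\big)\text{div}u-H\cdot\nabla u\cdot H\Big)\text{d}x+C\,\Gamma\big(|\nabla u|^2_2+|\nabla\theta|^2_2+|D(u)|_\infty+1\big).\]
The boundary integral on the right is $\le |P|_2|\nabla u|_2+C|H|^2_\infty|\nabla u|_2\le\tfrac12\Gamma+C$ by Lemma \ref{s1c} and $|\theta|_\infty\le C$ from (\ref{we11c}), so after integrating in $t$ it may be moved harmlessly to the left-hand side. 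Since $\int_0^T\big(|\nabla u|^2_2+|\nabla\theta|^2_2+|D(u)|_\infty+1\big)\text{d}t\le C$ by Lemma \ref{s2cc} and (\ref{we11c}), Gronwall's inequality then gives $\Gamma(t)+\int_0^t|u|^2_{D^2}\,\text{d}s\le C$ with $C$ depending only on $C_0$ and $T$, which is the assertion; note that no factor of $\sigma$ ever entered.

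The one step requiring care is the $\nabla H$ estimate, which here is run without the help of magnetic diffusion. Having lost the good term $\tfrac2\sigma|H|^2_{D^2}$, one must check that the cross terms $C|H|_\infty|\nabla H|_2|u|_{D^2}$ produced by $\int A:\nabla H$ and $\int C:\nabla H$ remain controllable; but Young's inequality splits each into $C|\nabla H|^2_2$, absorbed by $\Gamma$ in the Gronwall term, plus $\epsilon|u|^2_{D^2}$, absorbed by the coercive $c\,|u|^2_{D^2}$ coming from the momentum identity. Thus the magnetic diffusion is genuinely dispensable for this lemma, and the rest is bookkeeping identical to Section $2$.
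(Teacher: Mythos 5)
Your proposal is correct and is essentially the paper's own argument: the paper proves this lemma exactly by citing the proof of Lemma \ref{s4} together with Remark \ref{lian3}, i.e. by rerunning the estimates (\ref{zhu6})--(\ref{zhugai2}) with every $1/\sigma$ term deleted, which is precisely what you carry out (correctly noting that the lost diffusion term was never needed, since the cross terms $|H|_\infty|\nabla H|_2|u|_{D^2}$ are absorbed via Young into the coercive $|u|^2_{D^2}$ from the momentum estimate). Your write-up just spells out the details the paper leaves implicit.
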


Next, we proceed to improve the regularity of $H$, $\rho$, $u$ and $\theta$. To this end, we first drive some bounds on  $\nabla^2 u$ based on estimates above.

 \begin{lemma}\label{lem:4-1cc}
\begin{equation*}
\begin{split}
&|u(t)|^2_{  D^2}+|\sqrt{\rho} u_t(t)|^2_{2} +|\nabla \theta(t)|^2_2+|\rho_t(t)|^2_2\\
&+ \int_{0}^{T}\big( |u_t|^2_{D^1}+ |\sqrt{\rho}\theta_t|^2_{2}+ |\theta|^2_{D^2}\big)\text{d}s\leq C, \quad 0\leq t \leq T,
\end{split}
\end{equation*}
where the finite constant  $C>0$ only depends on $C_0$ and $T$ $(any\  T\in (0,\overline{T}])$.

 \end{lemma}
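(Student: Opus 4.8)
\textbf{Proof proposal for Lemma \ref{lem:4-1cc}.}
The plan is to re-run the argument of Lemma \ref{lem:4-1} but now with $\sigma=+\infty$, so that every term carrying a factor $\frac{1}{\sigma}$ simply drops out. This is precisely why the lemma statement here is weaker than Lemma \ref{lem:4-1}: there is no magnetic diffusion, hence no bound on $|H|_{D^2}$, no $|H_t|^2_2$, and no $\int_0^T|H_t|^2_{D^1}$; and crucially the constant $C$ no longer depends on $\sigma$. First I would record, exactly as in (\ref{mousun}) but using only Lemmas \ref{s1c}--\ref{s4cc}, the elliptic estimates
\[
|u|_{D^2}\leq C(|\sqrt{\rho}u_t|_2+|\nabla\theta|_2+1),\qquad
|\rho_t|_2\leq C(1+\|\nabla u\|_1),
\]
\[
|\theta|_{D^2}\leq C(|\sqrt{\rho}\theta_t|_2+|\nabla\theta|_2+|\nabla u|_6|\nabla u|_3+1),
\]
where compared with (\ref{mousun}) the term $\frac{1}{\sigma}|\nabla H|_6|\nabla H|_3$ in the $\theta$-equation is absent. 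Note that $|H|_{D^2}$ is \emph{not} estimated here; instead we carry $|H|_\infty$ and $|\nabla H|_2$ as already-controlled quantities from Lemmas \ref{s1c} and \ref{s4cc}.

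Next I would reproduce the $u_t$-energy estimate. Differentiating $(\ref{eq:1.2pp})_4$ in $t$, multiplying by $u_t$, integrating over $\Omega$, we obtain the analogue of (\ref{zhen4}); the terms $L_7,\dots,L_{10}$ are estimated verbatim as in (\ref{zhou6})--(\ref{mou4}) using $\|\nabla u\|_1$, $|D(u)|_\infty$, $|\rho_t|_2$ and $|\sqrt{\rho}\theta_t|_2$. For the magnetic contributions $L_{11}+L_{12}$, instead of invoking $H_t$ in $L^2$ we use the transport-type equation $H_t=H\cdot\nabla u-u\cdot\nabla H-H\,\mathrm{div}u$ (with the $\frac1\sigma\triangle H$ term gone), so that $|H_t|_2\leq C(|D(u)|_\infty+1)(|\nabla H|_2+1)$ is bounded by Lemmas \ref{s1c}, \ref{s4cc}; hence $L_{11}+L_{12}\leq\frac{\mu}{10}|\nabla u_t|^2_2+C(|D(u)|_\infty+1)^2$. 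Collecting everything yields
\[
\frac{d}{dt}|\sqrt{\rho}u_t|^2_2+|\nabla u_t|^2_2
\leq C(\|\nabla u\|^2_1+|D(u)|_\infty+1)(|\sqrt{\rho}u_t|^2_2+1)+C|\sqrt{\rho}\theta_t|^2_2,
\]
the analogue of (\ref{zhen5gvsd}) but without $C|H_t|^2_2$ on the right. Then I would run the $\theta_t$-estimate: multiplying $(\ref{eq:1.2pp})_5$ by $\theta_t$ as in (\ref{liu2ssccbb}), estimating $L_{13},\dots,L_{15}$ as in (\ref{zhou6ccss}) and simply discarding $L_{16}$ (the $\frac1\sigma|\mathrm{rot}H|^2$ source is zero), to get the $\sigma$-free version of (\ref{liu2vbvbcc}):
\[
\frac{\kappa}{2}\frac{d}{dt}|\nabla\theta|^2_2+|\sqrt{\rho}\theta_t|^2_2
\leq\frac{d}{dt}\!\int_\Omega Q(u)\theta\,dx+C\|\nabla u\|^2_1|\nabla\theta|^2_2+\frac{\mu}{10}|\nabla u_t|^2_2+C.
\]

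Adding the two differential inequalities, the $\frac{\mu}{10}|\nabla u_t|^2_2$ term is absorbed on the left, and setting $\Phi(t)=|\sqrt{\rho}u_t|^2_2+|\nabla\theta|^2_2$ I obtain
\[
\frac{d}{dt}\Phi+|\nabla u_t|^2_2+|\sqrt{\rho}\theta_t|^2_2
\leq C(\|\nabla u\|^2_1+|D(u)|_\infty+1)(\Phi+1)+\frac{d}{dt}\!\int_\Omega Q(u)\theta\,dx.
\]
The term $\int_\Omega Q(u)\theta\,dx$ is bounded by $C|\theta|_\infty|\nabla u|^2_2\leq C$ uniformly via (\ref{we11c}) and Lemma \ref{s4cc}, so after integrating in $t$ it contributes only a constant. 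The initial datum is controlled exactly as in (\ref{li9})--(\ref{nvk33}): from $(\ref{eq:1.2pp})_4$ and the compatibility condition $(\ref{th79})_1$ (here with $\mathrm{rot}H_0\times H_0$ interpreted via $-\frac12\nabla|H_0|^2+H_0\cdot\nabla H_0$), $\limsup_{\tau\to0}|\sqrt{\rho}u_t(\tau)|^2_2\leq C$, and similarly from $(\ref{eq:1.2pp})_5$ and $(\ref{th79})_2$, $\limsup_{\tau\to0}|\sqrt{\rho}\theta_t(\tau)|^2_2\leq C$; note that at $\sigma=+\infty$ the $\frac1\sigma|\mathrm{rot}H_0|^2$ terms in (\ref{th79}) vanish, which is consistent. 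Since $\int_0^T(\|\nabla u\|^2_1+|D(u)|_\infty+1)\,dt\leq C$ by Lemma \ref{s4cc} and (\ref{we11c}), Gronwall's inequality gives $\sup_{[0,T]}\Phi(t)+\int_0^T(|\nabla u_t|^2_2+|\sqrt{\rho}\theta_t|^2_2)\,dt\leq C$. Feeding this back into the elliptic estimates above bounds $|u|^2_{D^2}$, $|\rho_t|^2_2$ and (using $|\nabla u|_6\leq C\|u\|_{D^2}$) $\int_0^T|\theta|^2_{D^2}\,dt$, which completes the proof.

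I expect the only genuine subtlety to be the handling of $H_t$ in the terms $L_{11},L_{12}$ and in the companion estimates: in Lemma \ref{lem:4-1} these relied on an independently-estimated $|H_t|_2$ coming from the parabolic $H$-equation, whereas at $\sigma=+\infty$ one must instead substitute the algebraic expression $H_t=H\cdot\nabla u-u\cdot\nabla H-H\,\mathrm{div}u$ and control it by quantities already bounded in Lemmas \ref{s1c} and \ref{s4cc} together with the integrable weight $|D(u)|_\infty$. Everything else is a routine transcription of Lemma \ref{lem:4-1} with all $\frac1\sigma$-terms deleted and with care taken that no constant secretly depended on $\sigma$.
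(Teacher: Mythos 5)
Your overall plan is exactly the paper's: re-run Lemma \ref{lem:4-1} with every $\frac{1}{\sigma}$-term deleted, obtain the $\sigma$-free analogues of (\ref{mousun}), (\ref{zhen5gvsd}) and (\ref{liu2vbvbcc}), control $|\sqrt{\rho}u_t|_2$ at $t=0$ through the compatibility condition as in (\ref{li9})--(\ref{nvk33}), close with Gronwall using $\int_0^T(\|\nabla u\|^2_1+|D(u)|_\infty)\,\text{d}t\leq C$, and feed the result back into the elliptic estimates; your elliptic bounds (including dropping the spurious $|\nabla H|_6|\nabla H|_3$ term from the $\theta$-estimate, which indeed should not be there at $\sigma=+\infty$), your $\theta_t$-inequality and your treatment of $\frac{d}{dt}\int_\Omega Q(u)\theta\,\text{d}x$ all match the paper's (\ref{moujia1}), (\ref{liu2vbvbcczz}) and (\ref{gv88zz}).

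The one step that does not survive scrutiny is precisely the one you single out as the key subtlety, namely $L_{11}+L_{12}$. Your claimed bound $|H_t|_2\leq C(|D(u)|_\infty+1)(|\nabla H|_2+1)$ is not justified: in $H_t=H\cdot\nabla u-u\cdot\nabla H-H\,\text{div}u$ the term $u\cdot\nabla H$ gives $|u|_\infty|\nabla H|_2$, and $|u|_\infty$ is controlled neither by $|D(u)|_\infty$ nor pointwise in time by Lemmas \ref{s1c}, \ref{s4cc}; one must use $|u|_\infty\leq C\|\nabla u\|_1$, which brings in $|u|_{D^2}$. Moreover, even granting your bound, the resulting additive term $C(|D(u)|_\infty+1)^2$ is not known to be integrable in time, since (\ref{we11c}) only provides $|D(u)|_\infty\in L^1([0,T])$, not $L^2$, so the Gronwall step would not close for that term. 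The paper's estimate (\ref{moujia1}), which is also consistent with your own first display where you invoke $|u|_\infty\leq C\|\nabla u\|_1$ for $|\rho_t|_2$, is the correct fix: $|H_t|_2\leq C(|H|_\infty|\nabla u|_2+|u|_\infty|\nabla H|_2)\leq C(1+\|\nabla u\|_1)$, whence $L_{11}+L_{12}\leq \frac{\mu}{10}|\nabla u_t|^2_2+C(1+\|\nabla u\|^2_1)$ as in (\ref{zhou6zz}), and $\int_0^T\|\nabla u\|^2_1\,\text{d}t\leq C$ by Lemma \ref{s4cc} makes this term harmless in Gronwall. With that correction your argument coincides with the paper's proof.
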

\begin{proof}
From system (\ref{eq:1.2pp}) with $\sigma=+\infty$, (\ref{we11c}), Lemmas \ref{s1c}-\ref{s4cc} and the  similar arguments used in the derivation of (\ref{mousuncc})-(\ref{mousun}), we have
\begin{equation}\label{moujia1}
\begin{split}
|H_t|_{2}\leq& C|\text{rot}(u\times H)|_2\\
\leq& C(|H|_\infty|\nabla u|_2+| u|_\infty|\nabla H|_2)\leq C(1+\|\nabla u\|_1),\\
|u|_{D^2}\leq& C(|\rho u_t|_2+|\rho u\cdot \nabla u |_2+|\nabla P|_2+|\text{rot}H\times H|_2+|\nabla u|_2)\\
\leq& C(|\sqrt{\rho}u_t|_2+|\nabla \theta|_2+1),\\
|\theta|_{D^2}\leq &C(|\rho \theta_t|_2+|\rho u\cdot \nabla \theta |_2+|P \text{div}u |_2+|Q(u)|_2+|\nabla \theta|_2)\\
\leq& C(|\sqrt{\rho}\theta_t|_2+|\nabla \theta|_2+|\nabla u|_6|\nabla u|_3+|\nabla H|_6|\nabla H|_3+1),\\
  |\rho_t|_2\leq& C(|\rho \text{div}u |_2+|u\cdot \nabla \rho|_2)\leq C(1+\|\nabla u\|_1).
\end{split}
\end{equation}

Next 
multiplying (\ref{gh78}) by $u_t$ and integrating the resulting equation over $\Omega$, we have
\begin{equation}\label{jiamou5}
\begin{split}
&\frac{1}{2}\frac{d}{dt}\int_{\Omega}\rho |u_t|^2 \text{d}x+\int_{\Omega}(\mu|\nabla u_t|^2+(\lambda+\mu)(\text{div}u_t)^2) \text{d}x\\
=&  -\int_{\Omega}( \rho u \cdot \nabla  |u_t|^2- \rho u \nabla ( u \cdot \nabla u \cdot u_t)- \rho u_t \cdot \nabla u \cdot u_t+ P_t \text{div} u_t)\text{d}x\\
& +\int_{\Omega}H  \cdot H_t \text{div} u_t\text{d}x-\int_{\Omega}\big(H \cdot \nabla u_t \cdot H_t+H_t \nabla u_t \cdot H\big)\text{d}x
\equiv:\sum_{i=7}^{12}L'_i.
\end{split}
\end{equation}
We have to point out that, compared with the relation (\ref{zhen4}), we have 
$$
L_i=L'_i,\quad \text{for} \quad i=7,...,12
$$
in the sense of  the form.
Then similarly to the derivarion of (\ref{zhou6})-(\ref{mou4}), via Lemmas \ref{s1c}-\ref{s4cc} and (\ref{moujia1}), we have
\begin{equation}\label{zhou6zz}
\begin{split}
L'_7\leq& C\|\nabla u\|^2_1|\sqrt{\rho} u_t|^2_{2}+\frac{\mu}{10} |\nabla  u_t|^2_2, \\
L'_8
\leq& C \big( |\nabla u|^2_{3} |\nabla u|_{2}+ |\nabla u|^2_{2} \|\nabla u\|_{1}  | \big)|\nabla u_t|_{2}\\
\leq&  C\|\nabla u\|_{1}| \nabla u_t|_{2} \leq \frac{\mu}{10}|\nabla u_t|^2_{2}+C\|\nabla u\|^2_{1},\\
L'_9 \leq& C |D(u)|_\infty |\sqrt{\rho}u_t|^2_{2},\\
L'_{10}
\leq  & \frac{\mu}{10}|\nabla u_t|^2_{2}+C(|\rho_t|^2_2+|\sqrt{\rho}\theta_t|^2_2),\\
L'_{11}+& L'_{12}
\leq  C|H|_\infty |H_t|_2|\nabla u_t|_2\\
\leq&  \frac{\mu}{10}|\nabla u_t|^2_{2}+C|H_t|^2_{2}\leq \frac{\mu}{10}|\nabla u_t|^2_{2}+ C(1+\|\nabla u\|^2_1).
\end{split}
\end{equation}
Then combining the above estimates (\ref{jiamou5})-(\ref{zhou6zz}),  we have
\begin{equation}\label{zhen5gvsdzz}
\begin{split}
&\frac{1}{2}\frac{d}{dt}\int_{\Omega}\rho |u_t|^2 \text{d}x+\int_{\Omega}|\nabla u_t|^2\text{d}x\\
\leq & C(\|\nabla u\|^2_1+|D(u)|_\infty+1)(|\sqrt{\rho}u_t|^2_{2}+1)+C|\sqrt{\rho}\theta_t|^2_2.
\end{split}
\end{equation}

Secondly, multiplying  $ (\ref{eq:1.2pp})_5$ with $\sigma=+\infty$ by $\theta_t$ and integrating over $\Omega$, we have

\begin{equation}
\label{liu2ssccbbzz}
\begin{split}
&\frac{\kappa}{2}\frac{d}{dt}\int_{\Omega} |\nabla\theta|^2 \text{d}x+\int_{\Omega} \rho |\theta_t|^2 \text{d}x\\
= &-\int_{\Omega} \rho u\cdot \nabla \theta \theta_t\text{d}x-\int_{\Omega} P\text{div}u \theta_t\text{d}x
+\int_{\Omega} Q(u) \theta_t\text{d}x
= \sum_{i=13}^{15} L'_i.
\end{split}
\end{equation}
We have to point out that, compared with the relation (\ref{liu2ssccbb}), we have 
$$
Li=L'_i,\quad \text{for} \quad i=13, 14,15
$$
in the sense of  the form.
Then similarly to the derivarion of (\ref{zhou6ccss}), via Lemmas \ref{s1c}-\ref{s4cc} and (\ref{moujia1}), we have
\begin{equation}\label{zhou6cczz}
\begin{split}
L'_{13}
\leq&  C|\rho|^{\frac{1}{2}}_{\infty}|u|_{\infty}|\sqrt{\rho} \theta_t|_{2}|\nabla \theta|_{2}
\leq \frac{1}{4} |\sqrt{\rho} \theta_t|_{2}+C\|\nabla u\|^2_1 |\nabla  \theta|^2_2, \\
L'_{14}
 \leq& C|\rho|^{\frac{1}{2}}_{\infty}|\theta|_{\infty}|\sqrt{\rho} \theta_t|_{2}|\nabla u|_{2}
\leq \frac{1}{4} |\sqrt{\rho} \theta_t|_{2}+C |\nabla  u|^2_2,\\
L'_{15}
\leq &\frac{d}{dt}\int_{\Omega} Q(u) \theta\text{d}x+C|\nabla u|^2_2 +\frac{\mu}{10} |\nabla u_t|^2_2,
\end{split}
\end{equation}
which,  tegether with (\ref{liu2ssccbbzz}), implies that 
\begin{equation}
\label{liu2vbvbcczz}
\begin{split}\frac{\kappa}{2}\frac{d}{dt}&\int_{\Omega} |\nabla\theta|^2 \text{d}x+\int_{\Omega} \rho |\theta_t|^2 \text{d}x\\
\leq & \frac{d}{dt}\int_{\Omega} Q(u)\theta\text{d}x+C\|\nabla u\|^2_1|\nabla \theta_t|^2_{2}
+\frac{\mu}{10} |\nabla u_t|^2_2+C.
\end{split}
\end{equation}

Then combining (\ref{zhen5gvsdzz}) and (\ref{liu2vbvbcczz}),   we have

\begin{equation}\label{gv88zz}
\begin{split}
&\frac{d}{dt}\int_{\Omega}\Big(\rho |u_t|^2+|\nabla \theta|^2\Big) \text{d}x+\int_{\Omega}|\nabla u_t|^2+\rho|u_t|^2\text{d}x\\
\leq &C(|\sqrt{\rho}u_t|^2_2+|\nabla \theta|^2_2)(|D(u)|_\infty+\|\nabla u\|^2_1+1)\\
&+\frac{d}{dt}\int_{\Omega}Q(u) \theta\text{d}x+C(1+\|\nabla u\|^2_1).
\end{split}
\end{equation}

And, completely same as the derivation of (\ref{nvk33}),  we have
\begin{equation}\label{nvk33ss}
\begin{split}
\lim \sup_{\tau\rightarrow 0}|\sqrt{\rho}u_t(\tau)|^2_2 \leq C\int_{\Omega} \rho_0 |u_0|^2|\nabla u_0|^2\text{d}x+C\int_{\Omega} |g_1|^2\text{d}x\leq C.
\end{split}
\end{equation}

Then integrating (\ref{gv88zz}) over $(0,T)$ with respect to $t$, via (\ref{nvk33ss}) and  Gronwall's inequality, we deduce that
\begin{equation*}
\begin{split}
\big(|\sqrt{\rho} u_t|^2_2+|\nabla \theta|^2_2\big)(t) +\int_0^T( |\nabla u_t|^2_2+|\sqrt{\rho}\theta_t|^2_2) \text{d}t\leq C,\quad 0<t \leq T,
\end{split}
\end{equation*}
which, together with (\ref{moujia1}), gives the desired conclusions.
\end{proof}
The next lemma is similar to   Lemma \ref{lem:4-1zx}:

 \begin{lemma}\label{lem:4-1cczx}
\begin{equation*}
\begin{split}
|\sqrt{\rho} \theta_t(t)|^2_{2} +|\theta(t)|^2_{D^2}+ \int_{0}^{T} |\theta_t|^2_{D^1}\text{d}s\leq C, \quad 0\leq t \leq T,
\end{split}
\end{equation*}
where the finite constant  $C>0$ only depends on $C_0$ and $T$ $(any\  T\in (0,\overline{T}])$.
 \end{lemma}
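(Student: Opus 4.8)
The plan is to repeat verbatim the argument proving Lemma~\ref{lem:4-1zx}, exploiting the single simplification that for $\sigma=+\infty$ the Ohmic heating source $\frac{1}{\sigma}|\text{rot}H|^2$ in $(\ref{eq:1.2pp})_5$ disappears. First, from the elliptic regularity estimate already obtained in the proof of Lemma~\ref{lem:4-1cc} (the $\sigma=+\infty$ analogue of (\ref{mousun})) together with Lemmas~\ref{s1c}--\ref{lem:4-1cc}, one has $|\theta|_{D^2}\le C(1+|\sqrt{\rho}\,\theta_t|_2)$. Hence it suffices to bound $|\sqrt{\rho}\,\theta_t(t)|_2$ and $\int_0^T|\nabla\theta_t|^2_2\,\text{d}t$.

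Next I would differentiate $(\ref{eq:1.2pp})_5$ (with $\sigma=+\infty$) in time to get
\[
\rho\theta_{tt}-\kappa\triangle\theta_t=-\rho_t\theta_t-\rho_t u\cdot\nabla\theta-\rho u_t\cdot\nabla\theta-\rho u\cdot\nabla\theta_t+P_t\,\text{div}u+P\,\text{div}u_t+Q(u)_t ,
\]
multiply by $\theta_t$, and integrate over $\Omega$. This produces precisely the terms $L'_{17},\dots,L'_{23}$ corresponding to $L_{17},\dots,L_{23}$ in (\ref{zhen4v}), but with no counterpart of $L_{24}$. Each $L'_i$ is estimated exactly as its analogue in (\ref{zhou6cc})--(\ref{zhou6ccmm}), using Lemmas~\ref{s1c}--\ref{lem:4-1cc}, H\"older's and Gagliardo--Nirenberg's inequalities, Young's inequality, and the Poincar\'e type inequality (\ref{star}) (established in Lemma~\ref{pang}) to control $|\theta_t|_6$ by $|\sqrt{\rho}\,\theta_t|_2+|\nabla\theta_t|_2$; the $|\nabla\theta_t|^2_2$ contributions are absorbed on the left-hand side. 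The outcome is the differential inequality
\[
\tfrac12\tfrac{d}{dt}\int_\Omega\rho|\theta_t|^2\,\text{d}x+\kappa\int_\Omega|\nabla\theta_t|^2\,\text{d}x\le C(|D(u)|_\infty+1)|\sqrt{\rho}\,\theta_t|^2_2+C(|u_t|^2_{D^1}+1),
\]
in which, crucially, no $|\nabla H_t|_2$ term occurs and every constant is independent of $\sigma$.

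Then, just as in (\ref{li9cc})--(\ref{nvk33cc}), reading off $(\ref{eq:1.2pp})_5$ gives $|\sqrt{\rho}\,\theta_t(\tau)|^2_2\le C\int_\Omega\rho|u|^2|\nabla\theta|^2\,\text{d}x+C\int_\Omega\rho^{-1}|\kappa\triangle\theta+Q(u)|^2\,\text{d}x$, and the compatibility condition (\ref{th79}) with $\sigma=+\infty$ (the $\frac{1}{\sigma}|\text{rot}H_0|^2$ term being absent, so $-\kappa\triangle\theta_0-Q(u_0)=\sqrt{\rho_0}\,g_2$) yields $\limsup_{\tau\to0}|\sqrt{\rho}\,\theta_t(\tau)|^2_2\le C$. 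Since $\int_0^T(|D(u)|_\infty+|u_t|^2_{D^1}+1)\,\text{d}t\le C$ by assumption (\ref{we11c}) and Lemma~\ref{lem:4-1cc}, Gronwall's inequality delivers the bounds on $|\sqrt{\rho}\,\theta_t(t)|^2_2$ and on $\int_0^T|\nabla\theta_t|^2_2\,\text{d}t$; combined with $|\theta|_{D^2}\le C(1+|\sqrt{\rho}\,\theta_t|_2)$ this gives the stated estimate for $|\theta(t)|^2_{D^2}$.

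The principal difficulty present when $0<\sigma<+\infty$ is in fact absent here: the only place where that proof genuinely required the (now unavailable) bound on $\int_0^T|\nabla H_t|^2_2\,\text{d}t$ was in estimating the Ohmic heating contribution $L_{16}$/$L_{24}$, and for $\sigma=+\infty$ those terms vanish identically. So the only real work is bookkeeping: checking that each $L'_i$ closes using only the quantities already controlled in Lemmas~\ref{s1c}--\ref{lem:4-1cc}---in particular $|\rho_t|_2$, $|\nabla\theta|_2$, $|\sqrt{\rho}\,u_t|_2$, and $\int_0^T|u_t|^2_{D^1}\,\text{d}t$---and verifying that (\ref{star}) is applicable under the Neumann condition (\ref{fan3}), which is exactly what Lemma~\ref{pang} supplies.
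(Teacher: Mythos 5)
Your proposal is correct and follows essentially the same route as the paper: differentiate $(\ref{eq:1.2pp})_5$ (with $\sigma=+\infty$) in time, multiply by $\theta_t$, estimate the resulting terms $L'_{17},\dots,L'_{23}$ exactly as in the finite-$\sigma$ case using Lemmas \ref{s1c}--\ref{lem:4-1cc} and the Poincar\'e type inequality of Lemma \ref{pang}, obtain the initial bound on $|\sqrt{\rho}\,\theta_t|_2$ from the compatibility condition (\ref{th79}) without the Ohmic term, and close with Gronwall together with $|\theta|_{D^2}\le C(1+|\sqrt{\rho}\,\theta_t|_2)$. This matches the paper's proof, including the observation that the only genuinely $\sigma$-dependent term ($L_{24}$) is absent.
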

\begin{proof}
Firstly, from (\ref{moujia1}) and Lemmas \ref{s1c}-\ref{lem:4-1cc}, we quickly have
\begin{equation}\label{yuyuzz}
|\theta|_{D^2}\leq C(1+|\sqrt{\rho}\theta_t|_2).
\end{equation}

Next differentiating $ (\ref{eq:1.2pp})_5$  with respect to $t$ when $\sigma=+\infty$, we have
\begin{equation}\label{gh78vzz}
\begin{split}
\rho \theta_{tt}-\kappa \theta_t=&-\rho_t\theta_t -\rho_t u\cdot\nabla \theta-\rho u_t\cdot\nabla \theta-\rho u\cdot\nabla \theta_t\\
&+ P_t\text{div}u+P\text{div}u_t
 +Q(u)_t.
\end{split}
\end{equation}
Multiplying (\ref{gh78vzz}) by $\theta_t$ and integrating  over $\Omega$, we have
\begin{equation}\label{zhen4vzz}
\begin{split}
&\frac{1}{2}\frac{d}{dt}\int_{\Omega}\rho |\theta_t|^2 \text{d}x+\kappa\int_{\Omega}|\nabla \theta_t|^2\text{d}x\\
=&  R\int_{\Omega} \rho |\theta_t|^2 \text{div} u\text{d}x+R\int_{\Omega} \rho_t \theta \text{div} u \theta_t\text{d}x+R\int_{\Omega} \rho\theta \text{div} u_t \theta_t\text{d}x+\int_{\Omega} Q(u)_t \theta_t\text{d}x\\
&-\int_{\Omega} \rho_t |\theta_t|^2 \text{d}x
-\int_{\Omega} \rho_t  u \cdot \nabla \theta \theta_t\text{d}x+\int_{\Omega} \rho u_t \cdot \nabla \theta \theta_t\text{d}x\equiv:\sum_{i=17}^{23}L'_i.
\end{split}
\end{equation}
We have to point out that, compared with the relation (\ref{zhen4v}), we have 
$$
Li=L'_i,\quad \text{for} \quad i=17, ...,23
$$
in the sense of  the form.
Then similarly to the derivarion of (\ref{zhou6cc}), via Lemmas \ref{s1c}-\ref{lem:4-1cc} and (\ref{moujia1}), we have
\begin{equation}\label{zhou6cczzs}
\begin{split}
L_{17}\leq &  C|D(u)|_{\infty}|\sqrt{\rho} \theta_t|^2_{2},\\
L_{18}
\leq & C  \big(|\sqrt{\rho}\theta_t|_2+|\nabla \theta_t|_2\big)
\leq \frac{\kappa}{8}|\nabla \theta_t|^2_2+C(\sigma)  |\sqrt{\rho} \theta_t|^2_{2}+C ,\\
L_{19}
\leq &C|\nabla u_t|_2(|\nabla \theta_t|_2+|\sqrt{\rho}\theta_t|_2) \leq  \frac{\kappa}{8}|\nabla \theta_t|^2 +C |\nabla u_t|^2+C|\sqrt{\rho} \theta_t|^2_{2},\\
L_{20}
\leq &C |\nabla u_t|_2(|\nabla \theta_t|_2+|\sqrt{\rho}\theta_t|_2)\leq \frac{\kappa}{8}|\nabla \theta_t|^2 +C|\nabla u_t|^2+C |\sqrt{\rho} \theta_t|^2_{2},\\
L_{23}
 \leq & C|u|_{\infty}|\nabla \theta|_{3}|\rho _t|_{2}|\theta_t|_{6}
\leq  C|\nabla \theta|^{\frac{1}{2}}_{2}|\nabla \theta|^{\frac{1}{2}}_{6}(|\nabla \theta_t|_2+|\sqrt{\rho}\theta_t|_2)\\
\leq& \frac{\kappa}{8} |\nabla \theta_t|_{2}+C |\sqrt{\rho}\theta_t|^2_2+C\|\nabla \theta\|^2_1
\leq   \frac{\kappa}{8} |\nabla \theta_t|_{2}+C|\sqrt{\rho}\theta_t|^2_2+C,\\
L_{22}
 \leq& C|\rho|^{\frac{1}{2}}_{\infty}|u|_{\infty}|\sqrt{\rho} \theta_t|_{2}|\nabla \theta_t|_{2}
\leq \frac{\kappa}{8} |\nabla \theta_t|_{2}+C  |\sqrt{\rho}\theta_t|^2_2,\\
L_{23}
 \leq& C|\rho|^{\frac{1}{2}}_{\infty}|\nabla \theta|_{2}| \theta_t|_{6}|\sqrt{\rho} u_t|_{3}
\leq C|\sqrt{\rho} u_t|^{\frac{1}{2}}_{2}|\sqrt{\rho} u_t|^{\frac{1}{2}}_{6} (|\nabla \theta_t|_2+|\sqrt{\rho}\theta_t|_2)\\
\leq& \frac{\kappa}{8}|\nabla \theta_t|^2 +C  |\nabla u_t|^2+ C|\sqrt{\rho} \theta_t|^2_{2},
\end{split}
\end{equation}
where we have used the fact  (\ref{yuyuzz}) and the  following Poincar$\acute{\text{e}}$ type inequality (see \cite{lions}):
\begin{equation}
\label{starzz}\begin{split}
|\theta_t|_6\leq C|\sqrt{\rho}\theta_t|_2+C(1+|\rho|_2)|\nabla \theta_t|_2,
\end{split}
\end{equation}
and its proof can be seen in Lemma \ref{pang}.
Then according to (\ref{zhen4vzz})-(\ref{zhou6cczzs}), we deduce that 
\begin{equation}
\label{liuvbnm}
\begin{split}
&\frac{1}{2}\frac{d}{dt} \int_{\Omega}\rho |\theta_t|^2 \text{d}x+\kappa\int_{\Omega}|\nabla \theta_t|^2\text{d}x\\
\leq& C(|D(u)|_\infty+1)|\sqrt{\rho} \theta_t|^2_{2}+C(| u_t|^2_{D^1}+1).
\end{split}
\end{equation}
From the energy equations  $ (\ref{eq:1.2pp})_5$,  for any $\tau \in (0,T)$,  we easily have
\begin{equation}\label{li9cc}
\begin{split}
|\sqrt{\rho}\theta_t(\tau)|^2_2\leq C\int_{\Omega} \rho |u|^2|\nabla \theta|^2(\tau)\text{d}x+C\int_{\Omega} \frac{|\kappa\triangle \theta+Q(u)|^2}{\rho}(\tau)\text{d}x,
\end{split}
\end{equation}
due to the initial layer compatibility condition (\ref{th79}), letting $\tau\rightarrow 0$ in (\ref{li9cc}), we have
\begin{equation}\label{nvk33cczz}
\begin{split}
\lim \sup_{\tau\rightarrow 0}|\sqrt{\rho}\theta_t(\tau)|^2_2 \leq C\int_{\Omega} \rho_0 |u_0|^2|\nabla \theta_0|^2\text{d}x+C\int_{\Omega} |g_2|^2\text{d}x\leq C.
\end{split}
\end{equation}
Then according to Gronwall's inequality,  (\ref{yuyuzz}) and (\ref{nvk33cczz}), we immediately obtain the desired conclusions. 
\end{proof}

Finally, the following lemma gives bounds of $|\rho|_{D^1,q}$,    $|H|_{D^{1,q}}$,  $\nabla^2 u$ and   $\nabla^2 \theta$.
 \begin{lemma}\label{s7cc}
 \begin{equation*}
\begin{split}
\|(H,\rho)(t)\|_{W^{1,q}}+|(H_t,\rho_t)(t)|_{q}+\int_0^T(|u|^2_{D^{2,q}}+|\theta|^2_{D^{2,q}})\text{d}t\leq C,
\quad 0\leq t<  T,
\end{split}
\end{equation*}
where the finite constant  $C>0$ only depends on $C_0$ and $T$ $(any\  T\in (0,\overline{T}])$.
\end{lemma}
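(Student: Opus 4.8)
The plan is to follow the scheme of Lemma \ref{s7}, with the essential difference that the vanishing magnetic diffusion ($\sigma=+\infty$) allows only $W^{1,q}$-propagation of $H$ --- exactly as for $\rho$ --- rather than $D^{2,q}$-regularity; accordingly $\nabla H$ must be handled by a transport-type argument in $L^q$ instead of by elliptic regularity. First I would apply $\nabla$ to $(\ref{eq:1.2pp})_3$ and to $(\ref{eq:1.2pp})_1$ with $\sigma=+\infty$, multiply by $q|\nabla\rho|^{q-2}\nabla\rho$ and $q|\nabla H|^{q-2}\nabla H$ respectively, and integrate over $\Omega$. For the density this reproduces (\ref{zhu200}), namely
$$\frac{d}{dt}|\nabla\rho|_q\le C|D(u)|_\infty|\nabla\rho|_q+C|\nabla^2u|_q .$$
For the magnetic field, using $H_t=H\cdot\nabla u-u\cdot\nabla H-H\,\text{div}u$, the contributions carrying $\nabla^2 H$ (from $u\cdot\nabla H$) are removed by integration by parts via $\int_\Omega u^k\partial_k(|\nabla H|^q)\,\text{d}x=-\int_\Omega \text{div}u\,|\nabla H|^q\,\text{d}x$; the first-order terms are symmetrized in the free indices exactly as in (\ref{zhu20qq}) so that only $D(u)$ --- not the full $\nabla u$ --- enters; and the terms carrying $\nabla^2 u$ (from $H\cdot\nabla u$ and $H\,\text{div}u$) contribute $C|H|_\infty|\nabla H|^{q-1}_q|\nabla^2 u|_q$, with $|H|_\infty\le C$ by Lemma \ref{s1c}. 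Dividing by $|\nabla H|^{q-1}_q$ then gives $\frac{d}{dt}|\nabla H|_q\le C|D(u)|_\infty|\nabla H|_q+C|\nabla^2 u|_q$.

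Next I would close the $|\nabla^2 u|_q$ term through the elliptic estimate for the Lam\'e operator applied to $(\ref{eq:1.2pp})_4$: with $u|_{\partial\Omega}=0$, the uniform bounds on $\rho$, $|H|_\infty$, $|\theta|_\infty$ from Lemmas \ref{s1c}--\ref{lem:4-1cczx}, the embeddings $H^1_0\hookrightarrow L^6$ for $u_t$, $H^2\hookrightarrow L^\infty$ for $u$, and $H^1\hookrightarrow L^6$ for $\nabla u$ and $\nabla\theta$, together with $P=R\rho\theta$, one obtains for $q\in(3,6]$
$$|\nabla^2 u|_q\le C\big(|\rho u_t|_q+|\rho u\cdot\nabla u|_q+|\nabla P|_q+|\text{rot}H\times H|_q+|\nabla u|_q\big)\le C\big(1+|\nabla u_t|_2+|\nabla\rho|_q+|\nabla H|_q\big).$$
Setting $\Phi:=|\nabla\rho|_q+|\nabla H|_q$ and adding the two transport inequalities gives $\frac{d}{dt}\Phi\le C(1+|D(u)|_\infty)\Phi+C(1+|\nabla u_t|_2)$; since $\int_0^T|D(u)|_\infty\,\text{d}t\le C_0$ and $\int_0^T|\nabla u_t|^2_2\,\text{d}t\le C$ by Lemma \ref{lem:4-1cc}, Gronwall's inequality yields $\Phi(t)\le C$ on $[0,T]$, whence also $\int_0^T|u|^2_{D^{2,q}}\,\text{d}t\le C\int_0^T(1+|\nabla u_t|^2_2)\,\text{d}t\le C$. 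The bounds $|\rho_t|_q\le C(|u|_\infty|\nabla\rho|_q+|\rho|_\infty|\nabla u|_q)\le C$ and $|H_t|_q\le C(|H|_\infty|\nabla u|_q+|u|_\infty|\nabla H|_q)\le C$ then follow at once from $(\ref{eq:1.2pp})_1$ and $(\ref{eq:1.2pp})_3$; together with $|\rho|_q+|H|_q\le C$ this settles the $\|(H,\rho)(t)\|_{W^{1,q}}+|(H_t,\rho_t)(t)|_q$ part of the statement.

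Finally, for $\int_0^T|\theta|^2_{D^{2,q}}\,\text{d}t$ I would use the regularity estimate for the Neumann problem $-\kappa\triangle\theta=-\rho\theta_t-\rho u\cdot\nabla\theta-P\,\text{div}u+Q(u)$ (the term $\sigma^{-1}|\text{rot}H|^2$ being absent): $|\theta|_{D^{2,q}}\le C\big(|\rho\theta_t|_q+|\rho u\cdot\nabla\theta|_q+|P\,\text{div}u|_q+|Q(u)|_q+|\nabla\theta|_q\big)$. Here $|\rho\theta_t|_q\le C|\theta_t|_6\le C(|\sqrt{\rho}\theta_t|_2+|\nabla\theta_t|_2)$ by the Poincar\'e-type inequality (\ref{starzz}), which is in $L^2(0,T)$ by Lemma \ref{lem:4-1cczx}; the terms $|\rho u\cdot\nabla\theta|_q$, $|P\,\text{div}u|_q$, $|\nabla\theta|_q$ are bounded uniformly in $t$ using $u\in L^\infty(0,T;H^2)$, $\theta\in L^\infty(0,T;H^2)$ and $|\theta|_\infty\le C$; and $|Q(u)|_q\le C\||\nabla u|^2\|_q\le C|\nabla u|_\infty|\nabla u|_q\le C|\nabla u|_\infty$, which lies in $L^2(0,T)$ because $q>3$ yields $W^{1,q}\hookrightarrow L^\infty$ and $\int_0^T|u|^2_{D^{2,q}}\,\text{d}t\le C$ from the preceding step. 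Collecting these bounds gives $\int_0^T|\theta|^2_{D^{2,q}}\,\text{d}t\le C$. The main obstacle is the $L^q$ transport estimate for $\nabla H$ in the absence of diffusion: one must at once integrate by parts to cancel the $\nabla^2 H$ contributions, symmetrize to reduce $\nabla u$ to $D(u)$, and --- crucially --- retain the resulting $|\nabla^2 u|_q$ on the right so that it closes through the coupled Lam\'e elliptic bound and a Gronwall argument, which is exactly why the control of $\int_0^T|\nabla u_t|^2_2\,\text{d}t$ from Lemma \ref{lem:4-1cc} is indispensable.
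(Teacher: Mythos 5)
Your proposal is correct and follows essentially the same route as the paper: $L^q$ transport estimates for $\nabla\rho$ and $\nabla H$ (integrating by parts to remove $\nabla^2H$, symmetrizing so only $D(u)$ appears), the Lam\'e elliptic bound $|\nabla^2u|_q\le C(1+|\nabla u_t|_2+|\nabla\rho|_q+|\nabla H|_q)$ to close the system, a coupled Gronwall argument using $\int_0^T|D(u)|_\infty\,\text{d}t$ and $\int_0^T|\nabla u_t|_2^2\,\text{d}t$, and finally the Neumann elliptic estimate for $|\theta|_{D^{2,q}}$. Your explicit derivation of the $|\rho_t|_q$ and $|H_t|_q$ bounds from the equations is a small addition the paper leaves implicit, but it does not change the argument.
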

\begin{proof}
Firstly, from  $ (\ref{eq:1.2pp})_4$-$(\ref{eq:1.2pp})_5$, Lemmas \ref{s1c}-\ref{lem:4-1cczx} we  easily have
 \begin{equation}\label{zhu55cc}
\begin{split}
| u|_{D^{2,q}} \leq& C(|\rho u_t|_q+|\rho u\cdot \nabla u |_q+|\nabla P|_q+|\text{rot}H\times H|_q+|\nabla u|_q)\\
\leq &
 C(1+|u_t|_{D^1}+|\nabla \rho|_q+|\nabla H|_q),\\
|\theta|_{D^{2,q}}\leq &C(|\rho \theta_t|_q+|\rho u\cdot \nabla \theta |_q+|P \text{div}u |_q+|Q(u)|_q+|\nabla \theta|_q)\\
\leq& C(1+|\theta_t|_{D^1}+|u|_{D^{2,q}}).
\end{split}
\end{equation}

According to the proof in Lemma \ref{s7},  we immediately obtain
\begin{equation}\label{zhu200cc}
\begin{split}
\frac{d}{dt}|\nabla \rho|_q
\leq& C|D( u)|_\infty|\nabla \rho|_q+C|\nabla^2 u|_q,
\end{split}
\end{equation}
which means that
\begin{equation}\label{zhu28ssccbb}
\begin{split}
\frac{d}{dt}|\nabla \rho|_q
\leq& C(1+|D( u)|_\infty)(|\nabla \rho|_q+|\nabla H|_q)+C(1+|\nabla u_t|^2_2).
\end{split}
\end{equation}

Applying $\nabla$ to  $(\ref{eq:1.2pp})_1$, multiplying the resulting equation by $q\nabla H |\nabla H|^{q-2}$, we have
\begin{equation}\label{zhu20qs}
\begin{split}
&(|\nabla H|^2)_t-qA:\nabla H|\nabla H|^{q-2}+q B \nabla H|\nabla H|^{q-2}+qC : \nabla H|\nabla H|^{q-2}=0.
\end{split}
\end{equation}

Then integrating (\ref{zhu20qs}) over $\Omega$, due to
\begin{equation}\label{zhu20ccnm}
\begin{split}
&\int_{\Omega}  A: \nabla H |\nabla H|^{q-2}\text{d}x\\
=&\int_{\Omega}  \sum_{j=1}^3\Big(\sum_{i,k=1}^3  \partial_j H^k \partial_k u^i \partial_j H^i \Big)|\nabla H|^{q-2} \text{d}x+\int_{\Omega}  \sum_{j=1}^3\sum_{i=1}^3 \sum_{k=1}^3  H^k \partial_{kj} u^i \partial_j H^i |\nabla H|^{q-2}\text{d}x\\
=&\int_{\Omega}  \sum_{j=1}^3\Big(\sum_{i,k=1}^3  \partial_j H^k\frac{ (\partial_ku^i+\partial_iu^k)}{2}  \partial_j H^i \Big)|\nabla H|^{q-2} \text{d}x+\int_{\Omega}  \sum_{i,j,k=1}^3  H^k \partial_{kj} u^i \partial_j H^i |\nabla H|^{q-2}\text{d}x\\
\leq& C|D(u)|_\infty |\nabla H|^q_q+C|H|_\infty|\nabla H|^{q-1}_q |u|_{D^{2,q}},\\
&\int_{\Omega}  B: \nabla H|\nabla H|^{q-2} \text{d}x\\
=&\int_{\Omega}  \sum_{j=1}^3\sum_{i=1}^3 \sum_{k=1}^3  \partial_j u^k \partial_k H^i \partial_j H^i|\nabla H|^{q-2} \text{d}x+\int_{\Omega}  \sum_{j=1}^3\sum_{i=1}^3 \sum_{k=1}^3  u^k  \partial_{kj} H^i \partial_j H^i|\nabla H|^{q-2} \text{d}x\\
=&\int_{\Omega}  \sum_{i=1}^3\Big(\sum_{j,k=1}^3  \partial_j u^k \partial_k H^i \partial_j H^i \Big)|\nabla H|^{q-2} \text{d}x+\frac{1}{2}\int_{\Omega}  \sum_{k=1}^3  u^k\Big(\sum_{j,i=1}^3   \partial_{k} |\partial_j H^i|^2|\nabla H|^{q-2} \Big)\text{d}x\\
=&\int_{\Omega}  \sum_{i=1}^3\Big(\sum_{j,k=1}^3 \partial_k H^i    \partial_j u^k \partial_j H^i \Big)|\nabla H|^{q-2} \text{d}x+\frac{1}{2}\int_{\Omega}  \sum_{k=1}^3  u^k\Big(\sum_{j,i=1}^3 \partial_{k} |\nabla H|^2|\nabla H|^{q-2} \Big)\text{d}x\\
=&\int_{\Omega}  \sum_{i=1}^3\Big(\sum_{j,k=1}^3   \partial_k H^i \frac{(\partial_j u^k+\partial_ku^j)}{2} \partial_j H^i \Big)|\nabla H|^{q-2} \text{d}x+\frac{1}{q}\int_{\Omega}  \sum_{k=1}^3  u^k  \partial_{k} |\nabla H|^{q} \text{d}x\\
\leq& C|D(u)|_\infty |\nabla H|^q_q,
\end{split}
\end{equation}

\begin{equation}\label{zhu20vv}
\begin{split}
&\int_{\Omega}  C: \nabla H|\nabla H|^{q-2} \text{d}x
=\int_{\Omega} \big(\text{div}u|\nabla H|^q+(H\otimes \nabla \text{div}u) :\nabla H|\nabla H|^{q-2}\big) \text{d}x\\
\leq& C|D(u)|_\infty |\nabla H|^q_q+C|H|_\infty|\nabla H|^{q-1}_q |u|_{D^{2,q}},
\end{split}
\end{equation}
then we quickly obtain the following estimate
\begin{equation}\label{zhu21qs}
\begin{split}
\frac{d}{dt}|\nabla H|_q
\leq& C(|D( u)|_\infty+1)|\nabla H|_q+C | u|_{D^{2,q}},
\end{split}
\end{equation}
which means that
\begin{equation}\label{zhu28xx}
\begin{split}
\frac{d}{dt}|\nabla H|_q
\leq& C(1+|D( u)|_\infty)(|\nabla \rho|_q+|\nabla H|_q)+C(1+|\nabla u_t|^2_2).
\end{split}
\end{equation}

Then from (\ref{zhu28ssccbb}) and (\ref{zhu28xx}),  via Gronwall's inequality and (\ref{zhu55cc}) we quickly have
\begin{equation}\label{haoyue}
|\nabla H(t)|_q+|\nabla \rho(t)|_q+\int_0^T(|\nabla^2 u|^2_{q}+|\nabla^2 \theta|^2_{q}) \text{d}t\leq C,\quad 0<t\leq T.
\end{equation}
\end{proof}
And this will be enough to extend the strong solutions of $(H,\rho,u,\theta)$ beyond $t\geq \overline{T}$.

In truth, in view of the estimates obtained in  Lemmas \ref{s1c}-\ref{s7cc}, we quickly know that the functions $(H,\rho,u,\theta)|_{t=\overline{T}} =\lim_{t\rightarrow \overline{T}}(H,\rho,u,\theta)$ satisfy the conditions imposed on the initial data (\ref{th78})-(\ref{th79}) with $H_0\in W^{1,q}$. Therefore, we can take $(H,\rho,u,\theta)|_{t=\overline{T}}$ as the initial data and apply the local existence Theorem \ref{th5} to extend our local strong solution beyond $t\geq \overline{T}$. This contradicts the assumption on $\overline{T}$.

\section{Appendix}

We introduce some Poincar$\acute{\text{e}}$ type inequality  (see Chapter $8$ in \cite{lions}):
\begin{lemma}\label{pang}

There exists a constant $C$ depending only on $\Omega$  and $|\rho|_{r}$ $(r\geq 1)$ ($\rho\geq 0$  is a  real funciton satisfying $|\rho|_1>0$), 
such that for every $F\geq 0$ satisfying  
$$\rho F \in L^{1}(\Omega),\quad   \sqrt{\rho} F\in L^2(\Omega),\quad \nabla F\in  L^2(\Omega),$$ we have
\begin{equation*}
|F|_{6} \leq C\big( |\sqrt{\rho} F|_{2} +(1+|\rho|_2) |\nabla F|_{2} \big).
\end{equation*}
\end{lemma}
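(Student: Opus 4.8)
The plan is to reduce, by an elementary truncation, to the case $F\in H^1(\Omega)$, and then to split $F$ into its unweighted mean and its oscillation, controlling the mean by testing against the weight $\rho$. First, since $F\ge 0$, I would apply the asserted inequality to the bounded truncations $F_k=\min\{F,k\}$ (for which $\nabla F_k=\mathbf 1_{\{F<k\}}\nabla F\in L^2$, so $F_k\in H^1(\Omega)$, while $|\nabla F_k|_2\le|\nabla F|_2$, $|\sqrt\rho F_k|_2\le|\sqrt\rho F|_2$) and let $k\to\infty$, using $F_k\uparrow F$ and monotone convergence on $|F_k|_6$; so it suffices to treat $F\in H^1(\Omega)$. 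For such $F$, since $\Omega\subset\mathbb R^3$ is bounded and smooth, the Sobolev embedding $H^1\hookrightarrow L^6$ together with the Poincar\'e--Wirtinger inequality gives
\[
|F-\bar F|_6\le C_\Omega|\nabla F|_2,\qquad |F-\bar F|_2\le C_\Omega|\nabla F|_2,\qquad \bar F:=\frac{1}{|\Omega|}\int_\Omega F\,\dif x,
\]
and hence $|F|_6\le|F-\bar F|_6+|\Omega|^{1/6}|\bar F|\le C_\Omega|\nabla F|_2+C_\Omega|\bar F|$. Thus everything reduces to estimating the constant $|\bar F|$.

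To estimate $|\bar F|$, I would use the total mass $|\rho|_1=\int_\Omega\rho\,\dif x>0$ and write
\[
|\rho|_1\,\bar F=\int_\Omega\rho\,\bar F\,\dif x=\int_\Omega\rho F\,\dif x+\int_\Omega\rho(\bar F-F)\,\dif x .
\]
For the first term Cauchy--Schwarz gives $\int_\Omega\rho F\,\dif x\le|\sqrt\rho|_2\,|\sqrt\rho F|_2=|\rho|_1^{1/2}|\sqrt\rho F|_2$ (which is also why this integral is finite under the hypotheses); for the second term, Cauchy--Schwarz in the form $\int_\Omega\rho|F-\bar F|\,\dif x\le|\rho|_2\,|F-\bar F|_2$ followed by the Poincar\'e inequality above gives $\int_\Omega\rho|F-\bar F|\,\dif x\le C_\Omega|\rho|_2\,|\nabla F|_2$. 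Dividing by $|\rho|_1$,
\[
|\bar F|\le|\rho|_1^{-1/2}\,|\sqrt\rho F|_2+C_\Omega|\rho|_1^{-1}|\rho|_2\,|\nabla F|_2 .
\]
The one point that requires care here is that $\rho$ is assumed bounded only in $L^2$, which forces the oscillation $F-\bar F$ to be paired with $\rho$ via $L^2\times L^2$; this is exactly what produces the factor $(1+|\rho|_2)$ in the statement, instead of the density-free Poincar\'e constant one might naively hope for.

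Finally I would substitute this bound into $|F|_6\le C_\Omega|\nabla F|_2+C_\Omega|\bar F|$ and collect terms: the coefficient of $|\nabla F|_2$ becomes $C_\Omega\big(1+C_\Omega|\rho|_1^{-1}|\rho|_2\big)\le C(1+|\rho|_2)$ and the coefficient of $|\sqrt\rho F|_2$ becomes $C_\Omega|\rho|_1^{-1/2}\le C$, where $C$ depends only on $\Omega$ and $|\rho|_1$, hence is of the admissible form (with $r=1$ in the statement). This yields the claimed inequality. I do not expect a genuine obstacle: once the weighted-mean decomposition is set up the computation is routine, only the sign $F\ge0$ is used (and only in the truncation step), and a compactness/contradiction alternative would be less transparent since it would require either fixing $\rho$ or adding an extra compactness argument for the family of weights.
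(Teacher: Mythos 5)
Your proof is correct and follows essentially the same route as the paper's: decompose $F$ into its mean $\bar F$ and oscillation, bound $\bar F$ by testing against $\rho$ via Cauchy--Schwarz (yielding exactly the $|\rho|_1^{1/2}|\sqrt{\rho}F|_2$ and $|\rho|_2|\nabla F|_2$ terms), and conclude with the Poincar\'e inequality and the Sobolev embedding $H^1\hookrightarrow L^6$. The only addition is your preliminary truncation argument justifying $F\in H^1(\Omega)$ under the stated hypotheses, a technical point the paper passes over silently.
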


\begin{proof}
We first denote that
$$\overline{F}=\frac{1}{|\Omega|} \int_\Omega F(y) \text{d}y,$$
then via  the classical Poincar$\acute{\text{e}}$  inequality, we quickly deduce that
\begin{equation*}
\begin{split}
\overline{F} \int_\Omega \rho  \text{d}x&=\int_\Omega \rho     (F - \overline{F}) \text{d}x+\int_\Omega \rho     F \text{d}x \\
\leq& C \left( |\rho F|_{1} +|\rho|_2 |\nabla F|_{2} \right)\leq C \big(|\rho|^{\frac{1}{2}}_1 |\sqrt{\rho} F|_{2} +|\rho|_2 |\nabla F|_{2} \big),
\end{split}
\end{equation*}
which implies that 
\begin{equation}\label{zheng1}
\overline{F}\leq C \big( |\sqrt{\rho} F|_{2} +|\rho|_2 |\nabla F|_{2} \big).
\end{equation}
Second, we consider that 
\begin{equation}\label{zheng2}
\begin{split}
\|F\|_1=&|\nabla F|_2+|F|_2
\leq |\nabla F|_2+|F-\overline{F}|_2+\overline{F}|\Omega|^{\frac{1}{2}} \\
\leq & C\big( |\sqrt{\rho} F|_{2} +(1+|\rho|_2) |\nabla F|_{2} \big),
\end{split}
\end{equation}
then according to (\ref{zheng1})-(\ref{zheng2}) and the classical Sobolev imbedding theorem, we easily obtain the following inequality:
$$
|F|_6\leq C\|F\|_1\leq  C\big( |\sqrt{\rho} F|_{2} +(1+|\rho|_2) |\nabla F|_{2} \big).
$$
\end{proof}

\bigskip

\end{document}